\newtheorem{Theorem}{Theorem}[section]
\newtheorem{Corollary}[Theorem]{Corollary}
\newtheorem{Lemma}[Theorem]{Lemma}
\newtheorem{Proposition}[Theorem]{Proposition}
\theoremstyle{definition}
\newtheorem{Remark}[Theorem]{Remark}
\numberwithin{equation}{section}
\DeclareMathAlphabet\mathbb{U}{msb}{m}{n}
\newcommand{\mono}{\rightarrowtail}
\newcommand{\epi}{\twoheadrightarrow}
\def\QQ{{\mathbb Q}}
\def\ZZ{{\mathbb Z}}
\def\GG{{\mathcal G}}
\def\MM{{\mathcal M}}
\begin{document}

\title{On Bousfield's problem for solvable groups of finite Pr\"ufer rank}
\author{Sergei O. Ivanov}
\address{Chebyshev Laboratory, St. Petersburg State University, 14th Line, 29b,
Saint Petersburg, 199178 Russia} \email{ivanov.s.o.1986@gmail.com}\maketitle
\begin{abstract} For a group $G$ and $R=\ZZ,\ZZ/p,\QQ$ we denote by $\hat G_R$ the $R$-completion of $G.$ We study the map $H_n(G,K)\to H_n(\hat G_R,K),$ where $(R,K)=(\ZZ,\ZZ/p),(\ZZ/p,\ZZ/p),(\QQ,\QQ).$ We prove that $H_2(G,K)\to H_2(\hat G_R,K)$ is an epimorphism for a finitely generated solvable group $G$ of finite Pr\"ufer rank. In particular, Bousfield's $HK$-localisation of such groups coincides with the $K$-completion for $K=\ZZ/p,\QQ.$ Moreover, we prove that $H_n(G,K)\to H_n(\hat G_R,K)$ is an epimorphism for any $n$ if $G$ is a finitely presented group of the form $G=M\rtimes C,$ where $C$ is the infinite cyclic group and $M$ is a $C$-module. 
\end{abstract}

\let\thefootnote\relax\footnote{The research is supported by the Russian Science Foundation grant
N 16-11-10073.}

\section*{Introduction}

Throughout the paper $R$ denotes one of the rings $\ZZ,\QQ,\ZZ/p$ and $K$ denotes one of the fields $\QQ$ or $\ZZ/p,$ where $p$ is prime. If $R$ and $K$ occur together, we assume that $K$ is a quotient of $R.$ In other words,  there are three variants for $(R,K):$
$$ (R,K)\in \{\ (\ZZ,\ZZ/p), \ (\ZZ/p,\ZZ/p), \ (\QQ,\QQ)\ \} .  $$

Bousfield in \cite{Bousfield75} introduces the $R$-homological localization $X_R$ of a space $X.$ It is a space $X_R$ with an $R$-homology equivalence $X\to X_R$, which is universal among all $R$-homology equivalences $X\to Y.$ He proves that the $R$-localization exists for any space. The $R$-localization is well understood for 1-connected spaces. However, if $X$ is not 1-connected, even the computation of $\pi_1(X_R)$ can be a very difficult problem. Bousfield reduces the problem to a purely algebraic problem introducing the $HR$-localization $G_{HR}$ of a group $G$. It is a group $G_{HR}$ together with a homomorphism $G\to G_{HR}$ that induces an isomorphism on the level of the first homology $H_1(G,R)\cong H_1(G_{RH},R),$  an epimorphism on the level of the second homology $H_2(G,R)\epi H_2(G_{HR},R)$ and which is universal among such homomorphisms $G\to H$. He proves that the $HR$-localization exists for any group and   
 $$\pi_1(X_R)=\pi_1(X)_{HR}.$$ 
 
The construction of the $HR$-localization is not explicit and it is still difficult to compute it. But we can compare the $HR$-localization with the  more explicit construction of $R$-completion of a group $G,$ which is defined as follows 
$$\hat G_R=\begin{cases} \varprojlim\ G/\gamma^{R}_iG,& \text{ \ if \ } R\in \{\ZZ, \ZZ/p\}\\
\varprojlim\ G/\gamma^{\QQ}_iG \otimes \QQ,& \text{ \ if \ } R=\QQ,
  \end{cases} $$
 where $\gamma^R_iG$ is the lower $R$-central series of $G$ and $-\otimes \QQ$ denotes the Malcev completion ($P$-localization, where $P$ is the set of all primes, in terms of \cite{Hilton}) of a nilpotent group.  There is a natural homomorphism 
\begin{equation}\label{eq_HR_to_R-compl}
G_{HR}\longrightarrow \hat G_R.
\end{equation} 
If the map \eqref{eq_HR_to_R-compl} is an isomorphism, the group $G$ is called $HR${\bf -good} (similarly to $R$-good spaces).  Otherwise, it is called $HR${\bf -bad}.  In the book of Bousfield  \cite{Bousfield77} there is only one problem:

\

{\bf Problem} (Bousfield): {\it Is there an $HK$-bad finitely presented group for $K=\QQ$ or $K=\ZZ/p$?} 

\

It is known that a group $G$ is $HR$-good if and only if the map $H_1(G,R)\to H_1(\hat G_R,R) $ is an isomorphism and $H_2(G,R)\to H_2(\hat G_R,R)$ is an epimorphism. If $G$ is finitely generated, then $H_1(G,R)\to H_1(\hat G_R,R)$ is an isomorphism (because $G_{HR}/\gamma_\omega^R(G_{HR})=\hat G_R$ \cite{Bousfield77}). So the problem of Bousfield can be reformulated as follows: Is there a finitely presented group $G$ such that $H_2(G,K)\to H_2(\hat G_K,K)$ is not an epimorphism for $K=\QQ$ or $K=\ZZ/p$? 

We are interested not only in $\hat G_K$ for $K=\ZZ/p,\QQ.$ The pronilpotent completion $\hat G_\ZZ$ is interesting as well. It is easy to find an example of an $H\ZZ$-bad finitely presented group (for example, Klein bottle group $\ZZ\rtimes C$).  However, the homomorphism on the level of homology with finite coefficients $H_2(G,\ZZ/p)\to H_2(\hat G_\ZZ,\ZZ/p)$ is usually an epimorphism for all examples that we know.  So we generalise the problem of Bousfield and formulate it in the form of conjecture.

\ \\
{\bf Conjecture:} {\it If $G$ is a finitely presented group, then the map 
\begin{equation}\label{eq_homol_map_2}
H_2( G,K)\longrightarrow H_2(\hat G_R,K)
\end{equation}
is an epimorphism for $(R,K)\in \{(\ZZ/p,\ZZ/p),(\ZZ,\ZZ/p),(\QQ,\QQ)\}$. }

\

In  \cite{Ivanov_Mikhailov} Roman Mikhailov and I proved that the conjecture is true for the class of metabelian groups. Generally we do not believe that the conjecture is true for all finitely presented groups. We tried to find a counterexample among those solvable groups, for which we can compute $H_2(\hat G_R,K).$ For example, for Abels' group \cite{Abels} (which is known as a counterexample for some conjectures that are true for metabelian groups) or for finitely presented groups of the form $N\rtimes C,$ where $C$ is the infinite cyclic group and $N$ is nilpotent. But in all these cases the map turns out be an epimorphism. Then I found a reason why in all these cases the map is an epmorphism. All these groups are of finite Pr\"ufer rank. The first goal of the paper is to study homological properties of $\hat G_R$ for solvable groups $G$ of finite Pr\"ufer rank and prove the conjecture for them. 

Recall that a group is of type $F_n$ if it has a classifying space, whose $n$-skeleton is finite. A group is of type $F_1$ if and only if it is finitely generated. A group is of type $F_2$ if and only if it is finitely presented. It seems, it is natural to look at the homomorphism $H_n(G,K)\to H_n(\hat G_R,K)$ for a group $G$ of type $F_n$ for any $n$ and ask, when it is an epimorphism. So the most general problem in this direction is the following. 
\\ \ \\
{\bf Problem:} {\it Describe groups $G$ of type $F_n$ such that the map 
\begin{equation}\label{eq_homol_map_n}
H_n(G,K) \longrightarrow H_n(\hat G_R,K)
\end{equation}
is an epimorphism. }

\

Bousfield proofs \cite{Bousfield92} that one of the groups $H_2(\hat F_{\ZZ/p},\ZZ/p),$ $H_3(\hat F_{\ZZ/p},\ZZ/p)$ is not trivial for a free group $F$, and hence, not for all groups of type $F_n$ the map \eqref{eq_homol_map_n} is an epimorphism. However, it seems the class of groups of type $F_n$ such that the map \eqref{eq_homol_map_n} is an epimorphism is very large. We denote by $C$ the infinite cyclic group. If $M$ is a $C$-module, we can consider the semidirect product $M\rtimes C.$ Bieri and Strebel describe all modules $M$ such that $M\rtimes C$ is finitely presented \cite[Th. C]{Bieri-Strebel_78}. It can be shown, and we will show this later, that if $M\rtimes C$ is finitely presented, then $M\rtimes C$ is of type $F_n$ for any $n.$
The second goal of the article is to prove that for any finitely presented metabelian group of the form 
$G=M\rtimes C$ the homomorphism \eqref{eq_homol_map_n} is an epimorphism for any $n$. 

Usually we look at the $R$-completion $\hat G_R$ as on a discrete group. But it turned out that for our purposes it is useful to look on this group as on a topological group with the topology of inverse limit. Unfortunately, the standard continuous cohomology of topological groups \cite{Stasheff} behave well enough only for  compact groups. Because of this, we introduce  {\bf unform cohomology} $H^*_u(\mathscr{G},K)$ of a topological group $\mathscr{G},$ which are defined similarly to continuous cohomology, but instead of continuous chains we consider uniform chains. Here we consider only topological groups, whose left and right uniform structures coincide. We like the uniform cohomology more then the continuous cohomology because they send inverse limits to direct limits in a more general case (see Proposition \ref{proposition_uniform_cohomology}). A topological group $\mathscr{G}$ is said to be $n${\bf -cohomologically discrete} over $K$ if the obvious map is an isomorphism 
$$H^m_u(\mathscr{G},K) \cong H^m(\mathscr{G},K)$$
for any $m\leq n.$ If a topological group is $n$-cohomologically discrete over $K$ for any $n,$ we call it just cohomologically discrete over $K$. This definition is interesting for us because of the following statement that we prove.

\

{\bf Proposition.} {\it Let $G$ be a finitely presented group. If $\hat G_R$ is $2$-cohomologically discrete over $K$, then the map \eqref{eq_homol_map_2} is an epimorphism.}

\

For more detailed version and the proof see Proposition \ref{prop_eqviv_2-coh-discr}. 

A group $G$ is of finite Pr\"ufer rank (special rank, reduced rank) if there is a number $r$ such that any finitely generated subgroup of $G$ can be generated by $r$ generators. The minimal $r=r(G)$ with this property is called Pr\"ufer rank of $G.$  If there is a short exact sequence of groups $G'\mono G \epi G'',$ then $G$ is of finite Pr\"ufer rank if and only if $G'$ and $G''$ are of finite Pr\"ufer rank and $r(G)\leq r(G')+r(G'')$ \cite[Lemma 1.44]{Robinson}. Let $A$ be an abelian group. For  prime $p$ we set $${}_pA=\{a\in A\mid pa=0\}.$$ The group $A$ is of finite Pr\"ufer rank if and only if $A\otimes \QQ$ is finite dimensional over $\QQ$, ${}_pA$ is finite dimensional over $\ZZ/p$ and the dimensions are bounded over all $p$  \cite[pp. 33-34]{Robinson}. In this case 
$$r(A)={\sf dim}_\QQ(A\otimes \QQ)+{\rm max}\{{\sf dim}_{\ZZ/p} ({}_pA)\mid p \text{ is prime}\}.$$ 
Obviously, a solvable group $G$ is of finite Pr\"ufer rank if and only if there is a finite sequence of normal subgroups $G=U_1\supseteq \dots \supseteq U_s=1$ such that $U_i/U_{i+1}$ is an abelian group of finite Pr\"ufer rank. For example, the group $\ZZ[1/2]\rtimes C,$ where $C=\langle t \rangle$ is the infinite cyclic group acting on $\ZZ[1/2]$ by the multiplication on $2,$ is a finitely generated non-polycyclic metabelian group of Pr\"ufer rank $2.$  We prove the following theorem.

\

{\bf Theorem.} {\it Let $G$ be a solvable group of finite Pr\"ufer rank. Then $\hat G_R$ is cohomologically discrete over $K$. Moreover, if $G$ is finitely generated, then the map \eqref{eq_homol_map_2} is surjective and $G$ is $HK$-good.}

\

For a more detailed version and the proof see Proposition \ref{proposition_solvable_Q-prenilpotent} and Theorem \ref{theorem_solvable_R-compl}. 

We denote by $C$ the infinite cyclic group. A finitely generated $C$-module $M$ is called tame if: 
(1) the torsion subgroup ${\sf tor}(M)$ is finite;
(2) $M\otimes \QQ$ is finite dimensional;
(3) there exists a generator $t$ of $C$ such that the characteristic polynomial of \hbox{$t\otimes \QQ \in {\rm GL}(M\otimes \QQ)$} is integral. 
Bieri and Strebel proved \cite[Th. C]{Bieri-Strebel_78} that the group $G=M\rtimes C$ is finitely presented if and only if the $C$-module $M$ is tame. Note that a tame $C$-module is of finite Pr\"ufer rank. Hence, $G=M\rtimes C$ is a metabelian group of finite Pr\"ufer rank. It follows from the main theorem of \cite{Aberg} that $G$ is of type $FP_n$ for any $n.$ By \cite{BieriStrebel80} we know that $FP_n$ implies $F_n.$ Therefore, if $M$ is a tame $C$-module, $M\rtimes C$ is of type $F_n$ for any $n.$ We prove the following theorem. 

\

{\bf Theorem.} {\it Let $M$ be a tame $C$-module and $G=M\rtimes C$. Then the map \eqref{eq_homol_map_n} is an epimorphism for any $n$.}

\

For more detailed version see Proposition \ref{proposition_MrtimesC} and Theorem \ref{theorem_MrtimesC}. Moreover, we prove the following. If $C=\langle t \rangle$ and $M(t-1)\subseteq pM,$ then the   map \eqref{eq_homol_map_n} gives an isomorphism $H_*(G,\ZZ/p)\cong H_*(\hat G_{\ZZ/p},\ZZ/p) $ and it follows that the $\ZZ/p$-localization of the classifying space is the classifying space of the $\ZZ/p$-completion 
\begin{equation}\label{eq_bg_iso}
(BG)_{\ZZ/p}\cong B(\hat G_{\ZZ/p}).
\end{equation}
In particular, the isomorphism \eqref{eq_bg_iso} holds in the case $G=\ZZ^n\rtimes_a C,$ where $C$ acts on $\ZZ^n$ by a matrix $a\in {\rm GL}_n(\ZZ)$ such that entries of $a-1$ are divisible by $p.$

A lot of things can be done much simpler here, if we assume that $K\ne \ZZ/2 \ne R$ and use the natural isomorphism 
$$H_*(A,\ZZ/p)\cong \Lambda^*(A/p) \otimes \Gamma^{*/2}({}_pA)$$
for an abelian group $A$ and $p\ne 2$ (see \cite[Ch. V, Th. 6.6]{Brown}, \cite{Cartan}). But we intentionally do not use it because we want to cover the case of $K=\ZZ/2.$

The paper is organised as follows. In Section \ref{sectionAlemmaaboutspectralsequences} we prove some lemmas that generalise the standard fact that, if a morphism of converging spectral sequences is an isomorphism on some page, then it converges to an isomorphism.  In Section  \ref{sectionCohomologyofinverselimits} we study inverse systems of groups. Namely, we study relations between cohomology of the inverse limit, the direct limit of cohomology and the uniform cohomology of the inverse limit.  In section \ref{sectionMod-phomology} we study mod-$p$ homology of a completion of a solvable group of finite Pr\"ufer rank with respect to any filtration. In Section \ref{sectionRcompletions} we study $R$-completions of groups. In particular, we prove here that, if $R$-completion is $2$-cohomologically discrete, then the map $H_2(G,K)\to H_2(\hat G_R,K)$ is an epimorphism, and the main result that the $R$-completion of a solvable group of finite Pr\"ufer rank is cohomologically discrete over $K$. In Section \ref{sectionmetabliangroupsoverC} we prove that, for a tame $C$-module $M,$ if we set $G=M\rtimes C,$ then $H_n(G,K)\to H_n(\hat G_R,K)$ is an epimorphism. 

\section{Lemmas about spectral sequences}\label{sectionAlemmaaboutspectralsequences}

\begin{Lemma} Let $f:E\to E'$ be a morphism between two spectral sequences of cohomological type converging to a morphism of graded abelian groups $\varphi:H\to H'$ and $r,n$ be natural numbers. Assume that the spectral sequences are concentrated in the first quadrant. If for any pair of integers $(k,l)$ such that $(r-1)k\leq r(n-l)$ the morphism on the level of $r$th pages
 $f_r^{k,l}:E_r^{k,l}\to {E'}_r^{k,l}$ is an isomorphism, 
 then $\varphi^m:H^m\to {H'}^m$ is an isomorphism for $m\leq n.$
\end{Lemma}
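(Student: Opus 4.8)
The plan is to track a single linear ``weight'' on the lattice of bidegrees and to exploit that the hypothesis region is governed by it. Set $w(k,l)=(r-1)k+rl$, so that the condition $(r-1)k\le r(n-l)$ is exactly $w(k,l)\le rn$; write $W=\{(k,l):w(k,l)\le rn\}$ for the region on which $f_r$ is assumed to be an isomorphism. The point of this coordinate is that the differential $d_r$ preserves $w$ (the target $(k+r,l-r+1)$ has the same weight as $(k,l)$), while on a later page $d_s$ ($s>r$) sends a class of weight $w$ to one of weight $w-(s-r)$; dually, the source of a $d_s$ landing in bidegree $(k,l)$ has weight $w(k,l)+(s-r)$. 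I would first record the two elementary consequences of a diagram chase, valid for any morphism of spectral sequences: $f_{s+1}^{k,l}$ is an epimorphism provided $f_s^{k,l}$ is an epimorphism and $f_s$ is a monomorphism on the target $(k+s,l-s+1)$ of the outgoing differential; and $f_{s+1}^{k,l}$ is a monomorphism provided $f_s^{k,l}$ is a monomorphism and $f_s$ is an epimorphism on the source $(k-s,l+s-1)$ of the incoming differential. (When the relevant target or source lies outside the first quadrant the corresponding differential vanishes and the condition is void.)

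The naive attempt --- to propagate ``isomorphism on $W$'' from page $r$ to page $r+1$ to $\dots$ --- fails for exactly one reason, and this is the main obstacle: passing to $E_{s+1}^{k,l}$ involves the \emph{incoming} differential, whose source has weight $w(k,l)+(s-r)>w(k,l)$ and so may leave $W$. The remedy is to separate the monomorphism and epimorphism statements, because by the two facts above they flow in opposite directions in total degree: an epimorphism at $(k,l)$ needs a monomorphism one total degree higher, while a monomorphism at $(k,l)$ needs an epimorphism one total degree lower. I would therefore introduce the two regions
\[
\mathrm{Ep}=\{(k,l)\ \text{in the first quadrant}:k+l\le n\},\qquad \mathrm{Mo}=\{(k,l)\ \text{in the first quadrant}:k+l\le n+1,\ w(k,l)\le rn\},
\]
and prove by induction on $s\ge r$ that $f_s$ is an epimorphism on $\mathrm{Ep}$ and a monomorphism on $\mathrm{Mo}$.

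The induction closes because these two regions are adapted to one another. For $(k,l)\in\mathrm{Ep}$ the outgoing target $(k+s,l-s+1)$ has total degree $k+l+1\le n+1$ and weight $w(k,l)-(s-r)\le w(k,l)\le rn$, hence lies in $\mathrm{Mo}$; for $(k,l)\in\mathrm{Mo}$ any incoming source $(k-s,l+s-1)$ (which requires $s\le k$) has total degree $k+l-1\le n$ and weight $w(k,l)+(s-r)\le w(k,l)+(k-r)=r(k+l-1)\le rn$, hence lies in $\mathrm{Ep}$. Thus the epimorphism step feeds on the monomorphism hypothesis and vice versa, and the base case $s=r$ holds since $\mathrm{Mo}\subseteq W$ and $\mathrm{Ep}\subseteq\mathrm{Mo}$, where $f_r$ is an isomorphism. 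Granting the induction, and noting $\mathrm{Ep}\subseteq\mathrm{Mo}$, we conclude that $f_s^{k,l}$ is an isomorphism for every $s\ge r$ and every $(k,l)$ with $k+l\le n$.

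Finally I would feed this back into the abutment. Since the spectral sequences are first-quadrant and convergent, for each fixed bidegree $E_\infty^{k,l}=E_s^{k,l}$ for $s$ large, so $f_\infty^{k,l}$ is an isomorphism whenever $k+l\le n$. For $m\le n$ the graded group associated to $H^m$ is assembled from the $E_\infty^{k,l}$ with $k+l=m$, on all of which $\varphi$ induces an isomorphism; as the filtration of $H^m$ is finite, a routine five-lemma induction along the filtration upgrades this to the statement that $\varphi^m:H^m\to{H'}^m$ is an isomorphism for all $m\le n$.
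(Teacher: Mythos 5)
Your proof is correct, but it takes a genuinely different route from the paper's. You fix the single weight $w(k,l)=(r-1)k+rl$ once and for all, observe that the later differentials $d_s$ ($s>r$) strictly decrease it, and compensate by splitting the isomorphism statement into an epimorphism part on $\mathrm{Ep}=\{k+l\le n\}$ and a monomorphism part on $\mathrm{Mo}=\{k+l\le n+1,\ w\le rn\}$, propagating each through the pages by the two standard diagram-chase facts; the induction closes because epimorphisms need monomorphisms one total degree higher and monomorphisms need epimorphisms one total degree lower. The paper instead changes the weight with the page: on page $s$ it uses $w_s(k,l)=(s-1)k+sl$, which is \emph{exactly preserved} by $d_s$, so the region $\{w_s\le sn\}$ splits off as a direct summand of the differential module $(E_s,d_s)$ closed under both incoming and outgoing differentials; taking homology of that summand gives an isomorphism on page $s+1$ over the same region, which contains the next region $\{w_{s+1}\le (s+1)n\}$, and the whole argument is a downward induction on $n+2-r$ whose base case $r=n+2$ is the page where all positions of total degree $\le n$ have stabilized. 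The paper's argument is shorter and needs no epi/mono bookkeeping; yours is more elementary (only the two four-lemma-type facts about passing from $E_s$ to $E_{s+1}$) and yields slightly finer information along the way, namely that on \emph{every} page $s\ge r$ the map $f_s$ is surjective in total degrees $\le n$ and injective on the larger region $\mathrm{Mo}$. Both proofs conclude identically, via stabilization of first-quadrant spectral sequences and a five-lemma induction along the finite filtration of $H^m$. One cosmetic remark: in your monomorphism step the weight estimate for the source $(k-s,l+s-1)$ is superfluous, since membership in $\mathrm{Ep}$ only requires the total-degree bound $k+l-1\le n$.
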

\begin{proof} Fix $n.$ Note that for any pair $(k,l)$  such that $k+l\leq n$ and any $r$ the inequality $(r-1)k\leq r(n-l) $ holds. First, assume that $r\geq n+2.$ In this case, if $k+l\leq n,$ then $E_r^{k,l}=E_\infty^{k,l}$ and ${E'}_r^{k,l}={E'}_\infty^{k,l}.$ It follows that $\varphi^m$ is an isomorphism for $m\leq n.$ Prove now the statement for $r\leq n+2$ by induction on $n+2-r.$ We already proved the base step $n+2-r=0.$ Prove the inductive step. The direct sum $\bigoplus_{(k,l):(r-1)k\leq r(n-l)} E_r^{k,l}$ is a direct summand of the differential module $(E_r,d_r)$ because the bidegree of $d_r$ is $(r,-r+1)$ and, if $(k,l)$ satisfies $(r-1)k\leq r(n-l),$ then $(k+r,l-r+1)$ satisfies $(r-1)(k+r)\leq r(n-(l-r+1)).$  The same for $E'$ and $f_r:E_r\to E'_r$ respects this decomposition. It follows that $f_r$ induces an isomorphism on $(r+1)$st page $f_{r+1}^{k,l}:E_r^{k,l}\to {E'}_r^{k,l}$ for  indexes $(k,l)$ such  that $ (r-1)k\leq r(n-l).$ In particular, $f_{r+1}^{k,l}$ is an isomorphism for the smaller set of indexes satisfying
$ ((r+1)-1)k\leq (r+1)(n-l).$ The assertion follows from the inductive hypothesis.     
\end{proof}

There is a dual version. Formally it does not follow from the previous, but the proof is the same.

\begin{Lemma} Let $f:E\to E'$ be a morphism between two spectral sequences of homological type converging to a morphism of graded abelian groups $\varphi:H\to H'$ and $r,n$ be natural numbers. Assume that the spectral sequences are concentrated in the first quadrant. If for any pair of integers $(k,l)$ such that $(r-1)k\leq r(n-l)$ the morphism on the level of $r$th pages
 $f^r_{k,l}:E^r_{k,l}\to {E'}^r_{k,l}$ is an isomorphism, 
 then $\varphi_m:H_m\to {H'}_m$ is an isomorphism for $m\leq n.$
\end{Lemma}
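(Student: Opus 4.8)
The plan is to prove the dual (homological) version by directly mimicking the proof of the cohomological Lemma, since the author states the proof is the same but does not write it out. I would fix $n$ and note the key elementary observation that for any pair $(k,l)$ with $k+l\leq n$ and any $r$, the inequality $(r-1)k\leq r(n-l)$ holds automatically; this is what will eventually force $\varphi_m$ to be an isomorphism in total degrees $m\leq n$.

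First I would treat the base case of a downward induction on $n+2-r$. For $r\geq n+2$ the differential $d^r$ of bidegree $(-r,r-1)$ cannot interact with the region $k+l\leq n$ of the first quadrant, so $E^r_{k,l}=E^\infty_{k,l}$ and likewise for $E'$ whenever $k+l\leq n$. Since the hypothesis gives that $f^r_{k,l}$ is an isomorphism on all of this region, passing to the associated graded of the converging filtration on $H_m$ and $H'_m$ and using the five lemma (or a standard filtration-comparison argument) yields that $\varphi_m\colon H_m\to H'_m$ is an isomorphism for $m\leq n$. This is the base step $n+2-r=0$.

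Next I would carry out the inductive step, which is the technical heart and the main obstacle to get exactly right: showing that the isomorphism on the relevant region of the $r$th page descends to an isomorphism on the $(r+1)$st page. The crucial point is that the set of indices $S_r=\{(k,l): (r-1)k\leq r(n-l)\}$ is stable under the homological differential $d^r$ of bidegree $(-r,r-1)$ in the appropriate sense, so that $\bigoplus_{(k,l)\in S_r}E^r_{k,l}$ is a differential submodule (or rather a direct summand) of $(E^r,d^r)$. Concretely, if $(k,l)\in S_r$ then both the source $(k+r,l-r+1)$ and the target $(k-r,l+r-1)$ of differentials adjacent to $(k,l)$ must be checked to lie in $S_r$; since $d^r$ raises $k$ by $-r$ and the inequality defining $S_r$ is preserved in the correct direction, the homology at $(k,l)$ of the restricted complex agrees with $E^{r+1}_{k,l}$. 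Because $f^r$ respects this decomposition and is an isomorphism on the $S_r$-summand, it induces an isomorphism $f^{r+1}_{k,l}$ for all $(k,l)\in S_r$.

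Finally, I would observe that $S_{r+1}\subseteq S_r$: the defining inequality $((r+1)-1)k\leq (r+1)(n-l)$ is stronger than $(r-1)k\leq r(n-l)$ on the first quadrant where $k\geq 0$ and $n-l$ may be assumed comparable, so an isomorphism on all of $S_r$ in particular gives an isomorphism on the smaller set $S_{r+1}$. This verifies the hypothesis of the lemma at stage $r+1$ with the same $n$, and the inductive hypothesis then yields that $\varphi_m$ is an isomorphism for $m\leq n$. The only genuine care needed is in the index bookkeeping for the homological bidegrees $(-r,r-1)$, which differ in sign from the cohomological case; once the stability of $S_r$ under $d^r$ is confirmed, the argument is formally identical to the cohomological lemma already proved.
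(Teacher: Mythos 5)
Your proposal is correct and takes essentially the same approach as the paper, which proves the homological version simply by remarking that ``the proof is the same'' as the cohomological Lemma: your downward induction on $n+2-r$, with the key verification that the region $(r-1)k\leq r(n-l)$ and its complement are both stable under the differential of bidegree $(-r,r-1)$, is exactly that dualization. The bookkeeping you flag does work out, since adding or subtracting $r(r-1)$ to both sides of $(r-1)k\leq r(n-l)$ shows the differential moves parallel to the boundary line of the half-plane, so both the summand and its complement are preserved just as in the cohomological case.
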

\begin{proof}
The proof is the same.
\end{proof}

There are two corollaries for the second page.

\begin{Corollary}\label{cor_spectral_seq_coh} Let $f:E\to E'$ be a morphism between two spectral sequences of cohomological type converging to a morphism of graded abelian groups $\varphi:H\to H'$ and $n$ be a natural number. Assume that the spectral sequences are concentrated in the first quadrant. If for any pair of integers $(k,l)$ such that $k\leq 2(n-l)$ the morphism 
 $f_2^{k,l}:E_2^{k,l}\to {E'}_2^{k,l}$ is an isomorphism, 
 then $\varphi^m:H^m\to {H'}^m$ is an isomorphism for $m\leq n.$
\end{Corollary}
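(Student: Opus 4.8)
The plan is to obtain this statement as the special case $r=2$ of the first Lemma, the cohomological version proved above. First I would observe that the general hypothesis of that Lemma asks that $f_r^{k,l}$ be an isomorphism for every pair $(k,l)$ satisfying $(r-1)k\leq r(n-l)$. Substituting $r=2$ turns the left-hand side into $(2-1)k=k$ and the right-hand side into $2(n-l)$, so the defining inequality $(r-1)k\leq r(n-l)$ becomes exactly $k\leq 2(n-l)$. Hence the set of index pairs on which we are now assuming $f_2^{k,l}$ to be an isomorphism coincides precisely with the set of pairs required by the Lemma at the second page.

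Having matched the hypotheses, I would then simply apply the first Lemma with this value of the parameter, namely $r=2$. Its conclusion is that $\varphi^m:H^m\to {H'}^m$ is an isomorphism for all $m\leq n$, which is word for word the assertion of the Corollary. No additional argument, and in particular no further spectral-sequence bookkeeping, is needed.

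There is essentially no obstacle to overcome here: all of the real work—the inductive descent on $n+2-r$ and the observation that the relevant index region is a direct summand respected by $d_r$—is already contained in the Lemma. This Corollary merely records the single most frequently used instance, the one at the $E_2$-page, which is where the spectral sequences arising later in the paper are most explicitly computable. The only point to verify is the arithmetic of the substitution $r=2$ into the inequality, and that verification is immediate.
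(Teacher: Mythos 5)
Your proposal is correct and coincides with the paper's own (implicit) proof: the Corollary is stated there without argument precisely because it is the specialization $r=2$ of the first Lemma, under which the inequality $(r-1)k\leq r(n-l)$ becomes $k\leq 2(n-l)$. Your verification of this substitution and direct appeal to the Lemma is exactly what is intended.
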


\begin{Corollary} Let $f:E\to E'$ be a morphism between two spectral sequences of homological type converging to a morphism of graded abelian groups $\varphi:H\to H'$ and $n$ be a natural number. Assume that the spectral sequences are concentrated in the first quadrant. If for any pair of integers $(k,l)$ such that $k\leq 2(n-l)$ the morphism 
 $f^2_{k,l}:E^2_{k,l}\to {E'}^2_{k,l}$ is an isomorphism, 
 then $\varphi_m:H_m\to {H'}_m$ is an isomorphism for $m\leq n.$
\end{Corollary}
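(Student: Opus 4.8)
The plan is to derive this Corollary directly from the homological Lemma proved just above, by specializing to $r=2$. In that Lemma the hypothesis is that $f^r_{k,l}\colon E^r_{k,l}\to {E'}^r_{k,l}$ is an isomorphism for every pair $(k,l)$ with $(r-1)k\leq r(n-l)$. Substituting $r=2$ turns the inequality $(r-1)k\leq r(n-l)$ into $k\leq 2(n-l)$, which is verbatim the condition imposed in the present statement. Hence the assumption here is exactly the $r=2$ instance of the Lemma's hypothesis, and the Lemma's conclusion yields that $\varphi_m\colon H_m\to {H'}_m$ is an isomorphism for all $m\leq n$. This mirrors precisely how Corollary \ref{cor_spectral_seq_coh} is obtained from the cohomological Lemma.

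I expect no genuine obstacle: the entire content is the arithmetic observation that $(2-1)k=k$, so the two index conditions coincide and no new spectral-sequence bookkeeping is required. Should one prefer a self-contained argument rather than a citation, one could instead rerun the inductive step of the homological Lemma starting at $r=2$, noting that the differential $d_r$ of a homological spectral sequence has bidegree $(-r,r-1)$, so that the index set $\{(k,l):(r-1)k\leq r(n-l)\}$ is stable under $d_r$ and therefore splits off as a direct summand of the differential module $(E_r,d_r)$; the induction on $n+2-r$ then proceeds identically. Since invoking the Lemma is both shorter and cleaner, that is the route I would take.
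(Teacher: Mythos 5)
Your proposal is correct and matches the paper exactly: the Corollary is stated as an immediate consequence of the homological Lemma, obtained by taking $r=2$ so that the condition $(r-1)k\leq r(n-l)$ becomes $k\leq 2(n-l)$. Your remark about the bidegree $(-r,r-1)$ and the stability of the index set is also accurate, but no such rerun of the induction is needed.
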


\section{Cohomology of inverse limits}\label{sectionCohomologyofinverselimits}

Throughout the section we denote by $P$ a fixed filtered  partially ordered set (=directed set). The word ``filtered'' means that for any $i,j\in P $ there exists $k\in P$ such that
 $k\geq i$ and $k\geq j.$ The main example of $P$ for us is $\mathbb N$ but occasionally we use other examples like all finitely generated subgroups of a group.  
By a directed system in a category $\mathfrak{C}$ we mean a functor $\mathcal C:P\to \mathfrak{C},$ where $P$ is treated as a category. 
For $i\in P $ we set $C^i:=\mathcal C(i).$ 
Dually we define an inverse system $\mathcal C:P^{\rm op}\to \mathfrak{C}.$ 
In the case of inverse systems we use the dual notation $C_i:=\mathcal C(i).$ We say that a property holds for big enough $i$ if there exists $k\in P$ such that the property holds for any $i\geq k.$ 

We use directed limits (=filtered colimits) $\varinjlim$ of directed systems and inverse limits $\varprojlim$ of inverse systems. The important advantage of directed limits in contrast to inverse limits is that $\varinjlim: {\sf Ab}^P \to {\sf Ab}$ is an exact functor. 

Throughout the section $\GG$ denotes an inverse system groups and {\bf epimorphims}. We set $G_i:=\GG(i).$ So we assume that the homomorphism $G_j\to G_i$ is an epimorphism for any $i\leq j.$

Let $S$ be a commutative ring. A {\bf direct  $S[\GG]$-module} $\MM$ is a direct system of abelian groups,  together with a structure of $S[G_i]$-module on $M^i$  such that $M^i\to M^{j}$ is a homomorphism of $S[G_{j}]$-modules for any $i\leq j$. In the case $S=\ZZ$ we call them just direct $\GG$-modules.
If we set
$$ \hat \GG:=\varprojlim G_i,\hspace{1cm} \breve \MM=\varinjlim M^i,$$
then $\breve \MM$ has a natural structure of a $S[\hat \GG]$-module. Note that if $\mathcal G'\mono \mathcal G \epi \mathcal G''$ is a short exact sequence of groups and epimorphisms, then by Mittag-Leffler condition the sequence of inverse limits $\hat \GG'\mono \hat \GG \epi \hat\GG''$ is short exact. 

\subsection{Cohomologically discrete direct modules}

A direct $\GG$-module $\MM$ is said to be $n$-cohomologically discrete, if the homomorphism 
$$\varinjlim H^m(G_i,M^i)\longrightarrow H^m(\hat \GG,\breve \MM)$$
is an isomorphism for any $m\leq n$ (further we will prove that this definition is equivalent to a definition given on the language of uniform cohomology for some class of direct modules Proposition \ref{cor_quasiconstant_equiv}).
The inverse system $\GG$ is said to be $n$-cohomologically discrete over $K$ if the constant $\GG$-module $K$ is $n$-cohomologically discrete. If $\GG$ is $n$-cohomologically discrete $K$ for any $n,$ it is called cohomologically discrete over $K.$

\begin{Proposition} \label{prop_direct_coh_discr} Let $\GG'\mono \GG\epi \GG''$ be a short exact sequence of inverse systems of groups and epimorphisms and $\MM$ be a direct $\GG$-module.
Assume that 
\begin{enumerate}
\item $\MM$ is $n$-cohomologically discrete as a direct $\GG'$-module;
\item the direct $\GG''$-module $H^m(\GG',\MM)= \{H^m(G'_i,M^i)\}$ is $2(n-m)$-cohomologically discrete for any $0\leq m\leq n.$
\end{enumerate}
Then $\MM$ is $n$-cohomologically discrete as a direct $\GG$-module. 
\end{Proposition}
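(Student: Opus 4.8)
The plan is to compare the Lyndon--Hochschild--Serre spectral sequences of the extensions $G'_i\mono G_i \epi G''_i$ with that of $\hat\GG'\mono \hat\GG\epi \hat\GG''$ and to apply Corollary \ref{cor_spectral_seq_coh}. For each $i$ the extension $G'_i\mono G_i\epi G''_i$ yields a first-quadrant cohomological spectral sequence
$$E(i)_2^{k,l}=H^k(G''_i,H^l(G'_i,M^i))\Longrightarrow H^{k+l}(G_i,M^i),$$
and these assemble into a directed system of spectral sequences indexed by $P$. Since $\varinjlim:{\sf Ab}^P\to{\sf Ab}$ is exact, applying it termwise produces a first-quadrant cohomological spectral sequence $E$ with
$$E_2^{k,l}=\varinjlim_i H^k(G''_i,H^l(G'_i,M^i))\Longrightarrow \varinjlim_i H^{k+l}(G_i,M^i).$$
On the other hand, the extension $\hat\GG'\mono \hat\GG\epi \hat\GG''$ (which is exact by the Mittag--Leffler remark preceding the statement) gives
$$\bar E_2^{k,l}=H^k(\hat\GG'',H^l(\hat\GG',\breve\MM))\Longrightarrow H^{k+l}(\hat\GG,\breve\MM).$$
The projections $\hat\GG\to G_i$, $\hat\GG'\to G'_i$, $\hat\GG''\to G''_i$ together with the structure maps $M^i\to\breve\MM$ induce compatible morphisms $E(i)\to\bar E$, hence in the limit a morphism $f\colon E\to\bar E$ converging to the comparison map $\varinjlim_i H^{m}(G_i,M^i)\to H^{m}(\hat\GG,\breve\MM)$ that we must prove is an isomorphism for $m\leq n$.

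By Corollary \ref{cor_spectral_seq_coh} it suffices to show that $f_2^{k,l}$ is an isomorphism for every pair $(k,l)$ with $k\leq 2(n-l)$; note that such pairs automatically satisfy $l\leq n$, since $k\geq 0$ forces $2(n-l)\geq 0$. Fix such a pair. I would factor $f_2^{k,l}$ as the composite
$$\varinjlim_i H^k(G''_i,H^l(G'_i,M^i))\ \xrightarrow{\ \alpha\ }\ H^k\bigl(\hat\GG'',\varinjlim_i H^l(G'_i,M^i)\bigr)\ \xrightarrow{\ \beta\ }\ H^k(\hat\GG'',H^l(\hat\GG',\breve\MM)).$$
The map $\alpha$ is precisely the comparison map attached to the direct $\GG''$-module $H^l(\GG',\MM)=\{H^l(G'_i,M^i)\}$, whose colimit is $\varinjlim_i H^l(G'_i,M^i)$; as $k\leq 2(n-l)$, hypothesis (2) (with $m=l$) asserts that this module is $2(n-l)$-cohomologically discrete, so $\alpha$ is an isomorphism. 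The map $\beta$ is induced on $H^k(\hat\GG'',-)$ by the comparison map $\varinjlim_i H^l(G'_i,M^i)\to H^l(\hat\GG',\breve\MM)$, which by hypothesis (1) and $l\leq n$ is an isomorphism. Hence $f_2^{k,l}=\beta\alpha$ is an isomorphism and the proposition follows.

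The step requiring genuine care is the input to $\beta$: I need the comparison map $\varinjlim_i H^l(G'_i,M^i)\to H^l(\hat\GG',\breve\MM)$ of hypothesis (1) to be an isomorphism of $\hat\GG''$-modules, not merely of abelian groups, so that it can be inserted into $H^k(\hat\GG'',-)$. The source carries the colimit $\hat\GG''$-action coming from the direct $\GG''$-module structure on $\{H^l(G'_i,M^i)\}$, while the target carries the conjugation action of $\hat\GG''=\hat\GG/\hat\GG'$; the hard part will be verifying that these two actions are intertwined by the comparison map. This is a naturality statement: the projections $\hat\GG\to G_i$ are compatible with conjugation, and group cohomology is functorial in the pair consisting of the subgroup and the coefficient module, so the comparison map is equivariant, and the abelian-group isomorphism granted by the definition of $n$-cohomological discreteness upgrades to an isomorphism of $\hat\GG''$-modules. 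The only remaining routine point is to confirm, by a diagram chase through the functoriality of the Lyndon--Hochschild--Serre spectral sequence in $(\text{group},\text{module})$, that $\beta\alpha$ indeed equals $f_2^{k,l}$.
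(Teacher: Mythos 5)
Your proof is correct and follows essentially the same route as the paper: the same direct system of Lyndon--Hochschild--Serre spectral sequences, the same passage to the limit using exactness of $\varinjlim$, and the same factorization of the second-page comparison map into the two isomorphisms supplied by hypotheses (2) and (1), followed by Corollary \ref{cor_spectral_seq_coh}. The equivariance point you flag is real but routine, and the paper treats it as implicit.
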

\begin{proof}
For any fixed $i$ the morphism of short exact sequences 
$$
\begin{tikzcd} 1\arrow[r]& \hat \GG' \arrow[r]\arrow[d] & \hat \GG \arrow[r]\arrow[d] & \hat \GG''\arrow[r] \arrow[d] &1\\
1 \arrow[r]& G_i'\arrow[r] & G_i\arrow[r] & G_i''\arrow[r] & 1
\end{tikzcd}
$$
and the homomorphism of $\hat \GG$-modules $\hat \MM\to M^i$
yield a morphism of Lyndon–Hochschild–Serre spectral sequences, whose second page component is 
$$H^*(G_i'',H^*(G_i',M^i)) \longrightarrow H^*(\hat \GG'',H^*(\hat \GG',\breve M))$$
and 
that converges to the morphism $H^*(G_i,M^i)\to H^*(\hat \GG,\breve \MM).$ On the left hand we have a direct system of spectral sequences. Since direct limit is an exact functor, we obtain the spectral sequence of direct limits, whose second page is $\varinjlim H^*(G_i'',H^*(G_i',M^i))$ and that converges to $\varinjlim H^*(G_i,M^i).$ Moreover, we obtain a morphism of spectral sequences, whose second page component is
$$ \varinjlim H^*(G_i'',H^*(G_i',M^i)) \longrightarrow H^*(\hat \GG'',H^*(\hat \GG',\breve \MM))$$ and that converges to the homomorphism $\varinjlim H^*(G_i,M^i)\to H^*(\hat \GG,\breve \MM).$ Assume that $(k,l)$ is a pair of non negative integers such that $k\leq 2(n-l).$ Then 
$$\varinjlim H^k(G_i'',H^l(G_i',M^i))\longrightarrow  H^k(\hat \GG'',\varinjlim H^l(G_i',M^i))$$ 
is an isomorphism because $H^l(\GG',\MM)$ is $2(n-l)$-cohomologically discrete. 
The map $\varinjlim H^l(G'_i,M^i)\to H^l(\hat \GG',\breve \MM)$ is an isomorphism because $\MM$ is $n$-homologically discrete as a direct $\GG'$-module. 
It follows that 
$$\varinjlim H^k(G_i'',H^l(G_i',M^i)) \longrightarrow H^k(\hat \GG'',H^l(\hat \GG',\breve \MM))$$ is an isomorphism for pairs $(k,l)$ such that $k\leq 2(n-l).$ The assertion follows from Corollary \ref{cor_spectral_seq_coh}.
\end{proof}

\begin{Corollary} Let $\GG'\mono \GG\epi \GG''$ be a short exact sequence of inverse systems of groups and epimorphisms.
Assume that 
\begin{enumerate}
\item  $\GG'$ is $n$-cohomologically discrete over $K$;
\item the direct $\GG''$-module $H^m(\GG',K)= \{H^m(G'_i,K)\}$ is $2(n-m)$-cohomologically discrete for any $0\leq m\leq n.$
\end{enumerate}
Then $\GG$ is $n$-cohomologically discrete. 
\end{Corollary}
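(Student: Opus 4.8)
The plan is to recognize this Corollary as nothing more than the specialization of Proposition \ref{prop_direct_coh_discr} to the constant coefficient module. Concretely, I would take $\MM$ to be the constant $\GG$-module $K$ and check that, under this single choice, the two numbered hypotheses of the Corollary translate verbatim into the two hypotheses of the Proposition, so that the conclusion is immediate.

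First I would observe that the restriction of the constant $\GG$-module $K$ to the subsystem $\GG'$ is again the constant $\GG'$-module $K$. By definition, the phrase ``$\GG'$ is $n$-cohomologically discrete over $K$'' means precisely that this constant module is $n$-cohomologically discrete as a direct $\GG'$-module. This is exactly hypothesis (1) of Proposition \ref{prop_direct_coh_discr} in the case $\MM=K$. Next, with $\MM=K$ the direct $\GG''$-module $H^m(\GG',\MM)$ occurring in hypothesis (2) of the Proposition coincides on the nose with $H^m(\GG',K)=\{H^m(G'_i,K)\}$; hence the assumption that each of these systems is $2(n-m)$-cohomologically discrete for $0\leq m\leq n$ is identical to hypothesis (2) of the Corollary.

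Finally, the conclusion of Proposition \ref{prop_direct_coh_discr} is that the constant module $K$ is $n$-cohomologically discrete as a direct $\GG$-module, which by definition says that $\GG$ is $n$-cohomologically discrete over $K$. Since the entire argument is a direct substitution $\MM=K$ into an already-proved statement, there is no genuine obstacle here. The only points that require any verification at all are the two bookkeeping identifications just made: that restricting a constant module along $\GG'\mono\GG$ yields the constant module, and that the coefficient system $H^m(\GG',K)$ produced by the Proposition matches the one named in the Corollary. Both are immediate from the definitions of a constant direct $\GG$-module and of $n$-cohomological discreteness, so the proof amounts to invoking Proposition \ref{prop_direct_coh_discr} with $\MM=K$.
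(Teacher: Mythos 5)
Your proposal is correct and is exactly what the paper intends: the Corollary is stated without proof precisely because it is the specialization of Proposition \ref{prop_direct_coh_discr} to the constant direct $\GG$-module $\MM=K$, with both hypotheses and the conclusion translating verbatim under the definition of ``$n$-cohomologically discrete over $K$.'' Nothing further is needed.
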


\subsection{Quasiconstant direct systems}

Let $\mathfrak{A}$ be an abelian category. A direct system $\mathcal A$ in $\mathfrak{A}$
is said to be {\bf zero-equivalent} if for any $i$ there is $j>i$ such that the morphism $A^i\to A^j$ vanishes. In this case $\varinjlim A^i$ exists and vanishes.

Let $\mathcal A$ be a direct system in 
$\mathfrak{A}$. 
Assume that $\breve{\mathcal A}=\varinjlim A^i$ exists. 
Consider the kernels $B^i={\rm Ker}(A^i\to \breve {\mathcal A}).$ They form a direct system of groups $\mathcal B$. 
Then $\mathcal A$ is called {\bf quasiconstant} if 
$\mathcal B$ is zero-equivalent and the morphism 
$A^i\to \breve{\mathcal A}$ is an epimorphism for big enough $i$. 
Equivalently, $\mathcal A$ is quasiconstant if there exists a short exact sequence of direct systems $\mathcal B\mono \mathcal A \epi \mathcal C$ such that $\mathcal B$ is zero-equivalent and $C^i \to C^{j}$ is an isomorphism for big enough $i$ and any $j\geq i.$ In this case $ \breve A\cong C^i$ for  big enough $i.$

Dually one can define  zero-equivalent inverse systems and quasiconstant inverse systems. 

\begin{Lemma}\label{lemma_contravarian_functor} Let $\mathfrak{F}:\mathfrak{A}^{\rm op}\to \mathfrak{B}$ be an exact contravariant functor from an abelian category $\mathfrak{A}$ to an abelian category $\mathfrak{B}$. If $\mathcal A$ is a quasiconstant inverse system in $\mathfrak{A}$, then $\mathfrak{F}(\mathcal A)$ is a quasiconstant direct system in $\mathfrak{B}$ and
$$\varinjlim \mathfrak{F}(A_i)=\mathfrak{F}(\varprojlim A_i).$$ 
\end{Lemma}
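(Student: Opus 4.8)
The plan is to reduce everything to the short-exact-sequence characterisation of quasiconstancy and then apply $\mathfrak F$ termwise. By the dual of the ``equivalently'' description of quasiconstant systems, since $\mathcal A$ is a quasiconstant inverse system there is a short exact sequence of inverse systems
$$\mathcal C \mono \mathcal A \epi \mathcal B,$$
in which $\mathcal B$ is zero-equivalent and the transition maps $C_j\to C_i$ are isomorphisms for big enough $i$ and all $j\geq i$; moreover $\varprojlim A_i\cong C_{i_0}$ for big enough $i_0$. Applying the exact contravariant functor $\mathfrak F$ termwise to $0\to C_i\to A_i\to B_i\to 0$ reverses each arrow and, by exactness, produces a short exact sequence of \emph{direct} systems
$$\mathfrak F(\mathcal B)\mono \mathfrak F(\mathcal A)\epi \mathfrak F(\mathcal C).$$

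First I would check that this sequence exhibits $\mathfrak F(\mathcal A)$ as quasiconstant. Since $\mathfrak F$ is additive it sends the zero morphism to the zero morphism, so whenever $B_j\to B_i$ vanishes the induced map $\mathfrak F(B_i)\to \mathfrak F(B_j)$ vanishes too; hence $\mathfrak F(\mathcal B)$ is zero-equivalent. Likewise, any functor preserves isomorphisms, so the transition maps of $\mathfrak F(\mathcal C)$ are isomorphisms for big enough $i$. Thus the displayed sequence of direct systems has zero-equivalent subobject and eventually constant quotient, which is exactly the condition for $\mathfrak F(\mathcal A)$ to be quasiconstant. This settles the first assertion.

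For the identity of limits I would compare, for each of the three systems, the obvious map $\theta_{(-)}\colon \varinjlim \mathfrak F((-)_i)\to \mathfrak F(\varprojlim (-)_i)$ assembled from $\mathfrak F$ applied to the projections $\varprojlim(-)_i\to (-)_i$. Naturality of these projections makes $\theta$ a natural transformation, so the two short exact sequences fit into a commutative ladder
$$
\begin{tikzcd}[column sep=small]
0\arrow[r]& \varinjlim \mathfrak F(B_i)\arrow[r]\arrow[d,"\theta_{\mathcal B}"]& \varinjlim \mathfrak F(A_i)\arrow[r]\arrow[d,"\theta_{\mathcal A}"]& \varinjlim \mathfrak F(C_i)\arrow[r]\arrow[d,"\theta_{\mathcal C}"]&0\\
0\arrow[r]& \mathfrak F(\varprojlim B_i)\arrow[r]& \mathfrak F(\varprojlim A_i)\arrow[r]& \mathfrak F(\varprojlim C_i)\arrow[r]&0.
\end{tikzcd}
$$
The top row is exact because $\varinjlim$ is exact; the bottom row is exact because $\varprojlim B_i=0$, so left-exactness of $\varprojlim$ gives $\varprojlim C_i\cong \varprojlim A_i$, and applying the exact functor $\mathfrak F$ to the short exact sequence $0\to \varprojlim C_i\to \varprojlim A_i\to \varprojlim B_i\to 0$ yields the bottom sequence. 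Now $\theta_{\mathcal B}$ is an isomorphism since both sides vanish ($\varinjlim \mathfrak F(B_i)=0$ as $\mathfrak F(\mathcal B)$ is zero-equivalent, and $\mathfrak F(\varprojlim B_i)=\mathfrak F(0)=0$), and $\theta_{\mathcal C}$ is an isomorphism because for an eventually constant inverse system the projection $\varprojlim C_i\to C_{i_0}$ is an isomorphism, so under the canonical identifications $\theta_{\mathcal C}$ is $\mathfrak F$ of that isomorphism. The short five lemma then forces $\theta_{\mathcal A}$ to be an isomorphism, which is the asserted equality $\varinjlim \mathfrak F(A_i)=\mathfrak F(\varprojlim A_i)$.

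The routine heart of the argument is unwinding the dual definitions correctly, keeping track of which system is zero-equivalent and which is eventually constant once $\mathfrak F$ reverses the arrows. The only genuinely delicate point I expect is the bottom of the ladder: verifying that $\theta$ is natural with respect to the maps in the short exact sequence, and that $\mathfrak F$ carries the degenerate limit sequence (where $\varprojlim B_i=0$) to an exact sequence. Everything else follows immediately from additivity of $\mathfrak F$, its preservation of isomorphisms, and the exactness of $\varinjlim$.
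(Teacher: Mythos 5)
Your proof is correct. The paper itself offers no argument for this lemma (its proof is the single word ``Obvious''), so there is nothing to compare against; your route through the short-exact-sequence characterisation of quasiconstancy is the natural way to make the claim precise, and each step checks out against the paper's definitions. One caveat worth recording: the blanket justification ``the top row is exact because $\varinjlim$ is exact'' is not available in an arbitrary abelian category $\mathfrak{B}$ --- the paper asserts exactness of $\varinjlim$ only for ${\sf Ab}$, and in a general abelian category filtered colimits need not be exact, or even exist. Your ladder, however, only needs this in a degenerate case: the term $\varinjlim \mathfrak{F}(B_i)$ vanishes because $\mathfrak{F}(\mathcal B)$ is zero-equivalent, so exactness of the top row amounts to the claim that $\varinjlim \mathfrak{F}(A_i)\to\varinjlim \mathfrak{F}(C_i)$ is an isomorphism, and this follows from right-exactness of colimits (colimits commute with cokernels whenever all the colimits in question exist, and here they do exist by the quasiconstancy of $\mathfrak{F}(\mathcal A)$ established in your first part). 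Alternatively, you could bypass the five lemma entirely: the paper's remark that a quasiconstant system has (co)limit isomorphic to the eventually constant term $C_{i_0}$ identifies both $\varinjlim \mathfrak{F}(A_i)$ and $\mathfrak{F}(\varprojlim A_i)$ with $\mathfrak{F}(C_{i_0})$, leaving only the routine check that the canonical comparison map respects these identifications.
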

\begin{proof}
Obvious. 
\end{proof}

\begin{Lemma}\label{lemma_direct_system_of_finit_length} Let $\mathcal A$ be an inverse system (resp. directed system) of $S$-modules of finite length over $S.$ Assume that the lengths of the modules are bounded above i.e. there exists $n$ such that ${\sf length}(A_i)\leq n$ (resp. ${\sf length}(A^i)\leq n$) for any $i.$ Then $\mathcal A$ is quasiconstant and ${\sf length}(\varprojlim A_i)\leq n.$
\end{Lemma}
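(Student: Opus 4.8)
The plan is to prove the inverse-system statement directly by the Mittag--Leffler \emph{stable image} construction, and then to obtain the directed-system statement by the dual argument. First I would fix $i$ and examine the submodules ${\rm Im}(A_j\to A_i)\subseteq A_i$ for $j\geq i$. Since $A_{j'}\to A_i$ factors through $A_{j}$ whenever $j'\geq j\geq i$, these images only shrink as $j$ grows, and by filteredness of $P$ the family $\{{\rm Im}(A_j\to A_i)\}_{j\geq i}$ is downward directed. As $A_i$ has finite length it is Artinian, so this family has a least element $C_i:=\bigcap_{j\geq i}{\rm Im}(A_j\to A_i)={\rm Im}(A_{j_0}\to A_i)$ for $j_0$ large enough. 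A routine factorisation (again using filteredness to pass to a common large index) shows that the transition maps of $\mathcal A$ restrict to \emph{surjections} $C_j\epi C_i$, that $\mathcal C=\{C_i\}$ is a sub-inverse-system $\mathcal C\mono\mathcal A$, and that $\varprojlim A_i=\varprojlim C_i$ (every thread lands automatically in the stable images).

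Next I would exploit the length bound. Because $C_j\epi C_i$ is surjective for $j\geq i$, the assignment $i\mapsto {\sf length}(C_i)$ is nondecreasing along $P$ and bounded above by $n$, hence attains its maximum at some $i_0$. For $i,j\geq i_0$ the map $C_j\epi C_i$ is a surjection between finite-length modules of equal length, hence an isomorphism, so $\mathcal C$ is eventually constant with isomorphism transitions. Consequently $\varprojlim A_i=\varprojlim C_i\cong C_{i_0}$ and ${\sf length}(\varprojlim A_i)={\sf length}(C_{i_0})\leq n$. Finally, setting $B_i=A_i/C_i$ and $\mathcal B=\mathcal A/\mathcal C$, for $j$ large enough ${\rm Im}(A_j\to A_i)=C_i$, so the induced map $B_j\to B_i$ vanishes and $\mathcal B$ is zero-equivalent. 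The short exact sequence $\mathcal C\mono\mathcal A\epi\mathcal B$ then exhibits $\mathcal A$ as quasiconstant, with the eventual-constancy of $\mathcal C$ giving $\varprojlim A_i\cong C_i$ for big enough $i$.

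For the directed-system case I would dualise inside $\breve{\mathcal A}=\varinjlim A^i$: take $D^i={\rm Im}(A^i\to\breve{\mathcal A})$, an increasing family of submodules of $\breve{\mathcal A}$, each a quotient of $A^i$ and so of length $\leq n$. A short chain argument shows any strict chain of submodules of $\breve{\mathcal A}$ can be pushed into a single $D^j$ (pick an element separating each step, then use filteredness to land them in a common $D^j$), so ${\sf length}(\breve{\mathcal A})\leq n$ and the $D^i$ stabilise at some $D^{i_0}=\breve{\mathcal A}$; the kernels $B^i={\rm Ker}(A^i\to\breve{\mathcal A})$ are finitely generated, hence each is annihilated by a single transition map, so $\{B^i\}$ is zero-equivalent and $\mathcal A$ is quasiconstant.

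The one genuinely delicate point—and where I expect to spend the most care—is that $P$ is only a filtered poset, not $\mathbb N$: I cannot simply invoke ``a descending chain stabilises,'' but must check that the relevant families of images (resp. kernels and images in the colimit) are directed so that the Artinian (resp. Noetherian) property applies, and that the stable images genuinely assemble into a subsystem with surjective transition maps. The finite-length hypothesis, supplying both the Artinian and the Noetherian conditions together with a monotone bounded length function, is precisely what makes the stabilisation, the isomorphism step, and the zero-equivalence all go through.
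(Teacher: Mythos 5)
Your proof is correct and follows essentially the same route as the paper's: form the stable images $C_i={\rm Im}(A_j\to A_i)$ (which exist by the finite-length/Artinian hypothesis), note the quotient system is zero-equivalent and the transitions $C_j\epi C_i$ are surjective, and use the bounded, nondecreasing length to force these surjections to become isomorphisms. The only difference is one of care, not of substance: you handle a general filtered poset $P$ and write out the directed-system case explicitly, whereas the paper phrases the argument with consecutive indices and dismisses the dual case as ``similar.''
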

\begin{proof}
For any fixed $i$ the system of images ${\rm Im}(A_{j}\to A_i)$ for $j\geq i$ stabilises because $A_i$ has finite length. In other words there exists  $C_i\subseteq A_i$ such that $C_i={\rm Im}(A_{j}\to A_i)$ for big enough $j\geq i.$ Note that the inverse system $A_i/C_i$ is zero-equivalent.  It is easy to see that $C_{i+1}\to C_i$ is an epimorphism. Hence, ${\sf length}(C_{j})\geq {\sf length}(C_i)$ for $j\geq i.$ Since the lengths are bounded above, $C_{j}\to C_i$ is an isomorphism for  big enough $i.$  

The proof for directed systems is similar. 
\end{proof}

A direct $S[\GG]$-module is called quasiconstant if it is qasiconstant as a direct system of abelian groups.  

\begin{Lemma} Let $\MM$ be a direct $S[\GG]$-module such that each $M^i$ is a Noetherian $S[G_i]$-module. Then the following are equivalent. 
\begin{enumerate}
\item $\breve\MM$ is a Noetherian $S[\hat \GG]$-module.
\item $\MM$ is quasiconstant.
\end{enumerate}
\end{Lemma}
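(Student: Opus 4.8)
The plan is to prove the two implications by transferring module-theoretic properties between $S[\hat\GG]$ and the rings $S[G_i]$ along the projections $\hat\GG\to G_i$. The key preliminary observation is that each such projection is surjective: all transition maps of $\GG$ are epimorphisms, so (as already used for short exactness of inverse limits) the Mittag--Leffler condition holds and $\hat\GG=\varprojlim G_i$ maps onto every $G_i$. Hence, whenever the $\hat\GG$-action on a module factors through some $G_i$, the lattice of $S[\hat\GG]$-submodules coincides with the lattice of $S[G_i]$-submodules; in particular such a module is Noetherian over $S[\hat\GG]$ if and only if it is Noetherian over $S[G_i]$. This identification, applied to the $M^i$ and to their subquotients, is what lets me move between the two conditions.

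For $(2)\Rightarrow(1)$, assume $\MM$ is quasiconstant. Then $M^{i_0}\to\breve\MM$ is surjective for big enough $i_0$, so $\breve\MM\cong M^{i_0}/B^{i_0}$ with $B^{i_0}={\rm Ker}(M^{i_0}\to\breve\MM)$. I would first note that $M^{i_0}\to\breve\MM$ is $\hat\GG$-equivariant for the action on $M^{i_0}$ pulled back along $\hat\GG\epi G_{i_0}$; surjectivity then forces the $\hat\GG$-action on $\breve\MM$ to factor through $G_{i_0}$ too. Thus $\breve\MM$ is the restriction along $\hat\GG\epi G_{i_0}$ of the $S[G_{i_0}]$-module $M^{i_0}/B^{i_0}$, which is Noetherian over $S[G_{i_0}]$ as a quotient of the Noetherian module $M^{i_0}$. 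By the lattice identification, $\breve\MM$ is Noetherian over $S[\hat\GG]$.

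For $(1)\Rightarrow(2)$, assume $\breve\MM$ is Noetherian over $S[\hat\GG]$. Put $N^i={\rm Im}(M^i\to\breve\MM)$ and $B^i={\rm Ker}(M^i\to\breve\MM)$; these are $S[\hat\GG]$-submodules of $\breve\MM$ and of $M^i$ respectively, since $M^i\to\breve\MM$ is $\hat\GG$-equivariant. The $N^i$ form a directed family of submodules of $\breve\MM$ with union $\breve\MM$, so Noetherianity yields a maximal member $N^{i_0}$, and directedness gives $N^i=N^{i_0}=\breve\MM$ for all $i\ge i_0$; this is the surjectivity clause in the definition of quasiconstant. To get zero-equivalence of $\mathcal B=\{B^i\}$, I use the lattice identification to view $B^i$ as an $S[G_i]$-submodule of the Noetherian $S[G_i]$-module $M^i$, hence finitely generated over $S[G_i]$, say by $b_1,\dots,b_m$. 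Each $b_k$ dies in the filtered colimit $\breve\MM$, so some transition map $M^i\to M^{j_k}$ kills it; taking $j$ above all the $j_k$ kills every generator in $M^j$. Finally, since $M^i\to M^j$ is $S[G_j]$-linear and $G_j\epi G_i$ is surjective, killing an $S[G_i]$-generating set of $B^i$ kills all of $B^i$, so $B^i\to M^j$, and therefore $B^i\to B^j$, is zero. Thus $\mathcal B$ is zero-equivalent and $\MM$ is quasiconstant.

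The principal obstacle is the step in $(1)\Rightarrow(2)$ that upgrades ``each element of $B^i$ is eventually killed,'' which is all that filtered colimits give for free, to ``the whole submodule $B^i$ is killed at a single finite stage $j$.'' This is where Noetherianity is genuinely needed: it guarantees finite generation of $B^i$ over $S[G_i]$ (via the submodule-lattice identification), and equivariance of the transition maps together with surjectivity of $G_j\epi G_i$ then propagates the vanishing from the finitely many generators to all of $B^i$. Getting the surjectivity of $\hat\GG\to G_i$ and the consequent identification of submodule lattices exactly right is the technical linchpin on which both implications rest.
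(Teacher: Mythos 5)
Your proof is correct and takes essentially the same route as the paper: for (2)$\Rightarrow$(1) both realise $\breve\MM$ as a quotient of a single $M^{i_0}$, and for (1)$\Rightarrow$(2) both first stabilise the images ${\rm Im}(M^i\to\breve\MM)$ using Noetherianity of $\breve\MM$ and then kill the kernels using Noetherianity of $M^i$. The only cosmetic difference is that you phrase the last step via finite generation of $B^i$ over $S[G_i]$ and kill the finitely many generators at one stage, whereas the paper applies the ascending chain condition to the chain of kernels ${\rm Ker}(B^i\to B^j)$ --- two equivalent formulations of the same Noetherian argument.
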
  
\begin{proof}

(2)$\Rightarrow$(1) is obvious because $\breve \MM$ is a quotient of $M^i$ for  big enough $i.$

(1)$\Rightarrow$(2). Since $\breve \MM=\bigcup_{i=1}^\infty {\rm Im}(M^i\to \breve \MM)$ and $\breve \MM$ is Noetherian, we obtain ${\rm Im}(M^i\to \breve \MM)=\breve M$ for  big enough $i.$ Hence $M^i\to \breve \MM$ is an epimorphism for  big enough $i.$ Without loss of generality we can further assume that it is an epimorphism for all $i.$ Set $N^i={\rm Ker}(M^i\to \breve \MM).$ Then we have a short exact sequence of direct modules $\mathcal N\mono \mathcal M \epi \breve \MM.$ It follows that $\varinjlim N^i=0.$ Hence for any fixed $i$ we have $\bigcup_{j>i}^\infty {\rm Ker}(N^i\to N^{j})=N^i.$ Using that $M^i$ is Noetherian, we obtain $N^i={\rm Ker}(N^i\to N^j)$ for  big enough $j.$
\end{proof}

\subsection{Cohomologically discrete inverse limits}

Let $\mathscr{G}$ be a topological group, whose right and left uniform structures coincide. For a topological $\mathscr{G}$-module $\mathscr{M}$ we define {\bf uniform cohomology} of $\mathscr{G}$ with coefficients in $\mathscr{M}$ 
$$H_{u}^n(\mathscr{G},\mathscr{M})$$
similarly to continuous cohomology \cite{Stasheff} but instead of the complex of continuous cochains we consider the complex of uniform cochains 
$$C_u^n(\mathscr{G},\mathscr{M})= \{f:\mathscr{G}^n\to \mathscr{M}\mid f \text{ is uniform }\}.$$   
If $\mathscr{G}$ is a compact topological group, uniform cohomology coincide with continuous cohomology.

A discrete $\mathscr{G}$-module $\mathscr{M}$ is said to be $n${\bf -cohomologically discrete} if the morphism 
$$H^m_u(\mathscr G,\mathscr M)\longrightarrow H^m(\mathscr G,\mathscr M)$$
is an isomorphism for $m\leq n.$ The topological group $\mathscr{A}$ is said to be $n$-cohomologically discrete over a commutative ring $S$ if $S$ is a $n$-cohomologically discrete $\mathscr{G}$-module.

\begin{Proposition}\label{proposition_uniform_cohomology} Let  $\MM$ be a quasiconstant direct $\GG$-module. Consider $\hat \GG$ as a topological group with the topology of inverse limit and $\breve \MM$ as a discrete $\hat \GG$-module. Then $$H_u^*(\hat \GG,\breve \MM)=\varinjlim\: H^*(G_i,M^i).$$ 
\end{Proposition}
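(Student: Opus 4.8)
The plan is to identify the complex of uniform cochains $C_u^\ast(\hat\GG,\breve\MM)$ with a direct limit of ordinary inhomogeneous cochain complexes of the groups $G_i$, and then to replace the coefficients $\breve\MM$ by $M^i$ using quasiconstancy.

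First I would pin down the uniform cochains. Since all transition maps of $\GG$ are epimorphisms, the Mittag--Leffler condition gives that each projection $\hat\GG\epi G_i$ is surjective, with kernel $N_i$, and the $N_i$ form a basis of neighbourhoods of the identity; hence the (two-sided) uniform structure on $\hat\GG^n$ is generated by the entourages $\{(x,y)\mid x_ky_k^{-1}\in N_i \text{ for all } k\}$. As $\breve\MM$ carries the discrete uniformity, a function $f\colon\hat\GG^n\to\breve\MM$ is uniform precisely when it is constant on one such entourage, i.e.\ when it factors through $G_i^n$ for some $i$. Therefore
$$ C_u^n(\hat\GG,\breve\MM)\;=\;\varinjlim_i\ \{\, f\colon G_i^n\to\breve\MM \,\}, $$
the transition maps being precomposition with $G_j^n\epi G_i^n$.

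Next I would make the differential of $C_u^\ast$ agree with a direct limit of group-cohomology differentials, and this is exactly where I expect the hypothesis to do its work. Using that $\breve\MM\cong C^i$ and that $M^i\epi\breve\MM$ for big $i$ (both recorded in the discussion of quasiconstant systems), the $\hat\GG$-action on $\breve\MM$ factors through $G_i$ for all big $i$: any $m\in\breve\MM$ lifts to $m'\in M^i$, and $\hat g\cdot[m']=[\bar g\,m']$ depends only on the image $\bar g\in G_i$. Consequently, for $i$ large the functions $G_i^n\to\breve\MM$ form the inhomogeneous cochain complex $C^\ast(G_i,\breve\MM)$ computing $H^\ast(G_i,\breve\MM)$, the transition maps become inflations, and the cobar differential of $C_u^\ast$ restricted to cochains of level $i$ is exactly the inflation of $\delta_{G_i}$ (precisely because $\hat g$ acts through $\bar g$). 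Passing to the cofinal set of large $i$ and using exactness of $\varinjlim$ yields
$$ H_u^\ast(\hat\GG,\breve\MM)\;=\;\varinjlim_i H^\ast(G_i,\breve\MM). $$

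Finally I would trade $\breve\MM$ for $M^i$. For big $i$ the canonical $G_i$-linear map $C^i\to\breve\MM$ is an isomorphism, so $\varinjlim_i H^\ast(G_i,\breve\MM)=\varinjlim_i H^\ast(G_i,C^i)$. The short exact sequence of direct $\GG$-modules $\mathcal B\mono\MM\epi\mathcal C$ gives, for each $i$, a long exact sequence relating $H^\ast(G_i,B^i)$, $H^\ast(G_i,M^i)$ and $H^\ast(G_i,C^i)$; this is a long exact sequence of direct systems, and since $\varinjlim$ is exact it stays exact in the limit. Because $\mathcal B$ is zero-equivalent, each coefficient map $B^i\to B^j$ (for suitable $j>i$) vanishes, hence induces the zero map $H^\ast(G_i,B^i)\to H^\ast(G_j,B^j)$; thus $\{H^\ast(G_i,B^i)\}$ is zero-equivalent and $\varinjlim_i H^\ast(G_i,B^i)=0$. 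The limit long exact sequence then collapses to $\varinjlim_i H^\ast(G_i,M^i)\cong\varinjlim_i H^\ast(G_i,C^i)$, which combined with the previous display proves the claim. The main obstacle, and the only place quasiconstancy is genuinely needed, is the middle step: identifying the uniform cobar differential with the direct limit of the $\delta_{G_i}$ requires the $\hat\GG$-action on $\breve\MM$ to factor through $G_i$ at finite level, and this factoring is precisely what quasiconstancy supplies.
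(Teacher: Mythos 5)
Your proposal is correct and takes essentially the same route as the paper: both identify $C_u^*(\hat \GG,\breve \MM)$ with $\varinjlim C^*(G_i,\breve \MM)$ using the entourage basis given by the kernels of the projections $\hat \GG\epi G_i$, pass to cohomology by exactness of $\varinjlim$, and exchange $\breve \MM$ for $M^i$ via the zero-equivalent kernel system (the paper merely performs the coefficient exchange first and the cochain identification second, and handles the $G_i$-module structure on $\breve\MM$ implicitly through its initial reduction to the case where every $M^i\to\breve\MM$ is an epimorphism). Only your closing remark is slightly off: eventual surjectivity of $M^i\to \breve\MM$ (which makes the action factor through $G_i$) and zero-equivalence of $\mathcal B$ (which your last step uses) are the two halves of quasiconstancy, so the middle step is not the only place the hypothesis is genuinely needed.
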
  
\begin{proof} Without loss of generality, we can assume that $M^i\to \breve M$ is an epimorphism for any $i.$ Set $N^i:={\rm Ker}(M^i\to \breve M).$ The short exact sequence $N^i\mono M^i\epi \breve \MM$ gives rise a long exact sequence
$$H^{n-1}(G_i,N^i)\to H^n(G_i,\breve \MM) \to H^n(G_i,M^i) \to H^n(G_i,N^i).$$ Since $\mathcal N$ is zero-equivalent, $H^*(G_i,N_i)$ 
is zero-equivalent as well. Since the functor of direct limit is exact, it follows that $\varinjlim H^n(G_i,\breve \MM) \cong \varinjlim H^n(G_i,M^i).$ 

 Now we need to prove that 
$H^*(\hat \GG,\breve \MM) = \varinjlim H^*(G_i,\breve \MM).$ Since the functor of direct limit is exact, it is enough to prove  that $C_u^n(\hat \GG,\breve \MM)=\varinjlim\: C^n(G_i,\breve \MM).$ Consider the morphism $\varinjlim \: C^n(G_i,\breve \MM) \to C_u^n(\hat \GG,\breve \MM).$ Since the maps $\hat \GG\to G_i$ are epimorphisms, we get that it is a monomorphism. In order to prove that it is an epimorphism, we need to prove that for any $f\in C_u^n(\hat \GG,\breve \MM)$ there exists $i$ and  $\bar f\in C^n(G_i,\breve \MM)$ such that $f$ is a composition of $\bar f$ and the projection $\hat \GG^n\to G_i^n$. Set $H_i:={\rm Ker}(\hat \GG\to G_i).$ Then the uniform structure on $\hat \GG$ is generated by sets  $U_i=\{(g,g')\mid gH_i=g'H_i\}.$ It follows that for any $f$ there exists $i$ such that $f(g_1,\dots,g_n)=f(g_1',\dots,g_n')$ whenever $g_kH_i=g_k'H_i$ for any $k$. Then, if we identify $G_i$ with $\hat \GG/H_i,$ we can define $\bar f(g_1H_i,\dots,g_nH_i)=f(g_1,\dots,g_n).$
\end{proof}

\begin{Corollary}\label{cor_quasiconstant_equiv} Let $\MM$ be a quasiconstant direct $\GG$-module. Then $\MM$ is $n$-cohomologically discrete as a direct $\GG$-module if and only if $\breve \MM$ is $n$-cohomologically discrete as a discrete $\hat \GG$-module. 
\end{Corollary}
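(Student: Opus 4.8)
The plan is to derive the corollary almost formally from Proposition \ref{proposition_uniform_cohomology}, which already supplies, for a quasiconstant direct $\GG$-module $\MM$, the isomorphism
$$H^m_u(\hat \GG,\breve \MM)\cong \varinjlim H^m(G_i,M^i), \qquad m\geq 0.$$
By definition, $\MM$ is $n$-cohomologically discrete as a direct $\GG$-module exactly when the structural map $\varinjlim H^m(G_i,M^i)\to H^m(\hat \GG,\breve \MM)$ is an isomorphism for all $m\leq n$, while $\breve \MM$ is $n$-cohomologically discrete as a discrete $\hat \GG$-module exactly when the comparison map $H^m_u(\hat \GG,\breve \MM)\to H^m(\hat \GG,\breve \MM)$ is an isomorphism for all $m\leq n$. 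So it suffices to show that these two maps coincide under the isomorphism of Proposition \ref{proposition_uniform_cohomology}; both conditions then become the single assertion ``one fixed map is an isomorphism for $m\leq n$,'' and the equivalence is immediate in both directions.

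First I would recall how the isomorphism of Proposition \ref{proposition_uniform_cohomology} is assembled at the cochain level. It factors as
$$H^m_u(\hat \GG,\breve \MM)\ \xrightarrow{\ \cong\ }\ \varinjlim H^m(G_i,\breve \MM)\ \xrightarrow{\ \cong\ }\ \varinjlim H^m(G_i,M^i),$$
where the first isomorphism comes from the cochain identification $C^n_u(\hat \GG,\breve \MM)=\varinjlim C^n(G_i,\breve \MM)$ established in the proof of that proposition (a uniform cochain $f$ is exactly the pullback of some $\bar f\in C^n(G_i,\breve \MM)$ along the projection $\hat \GG\to G_i$), and the second comes from the coefficient change $M^i\to \breve \MM$, whose induced map on colimits of cohomology is invertible because the kernel system $N^i=\mathrm{Ker}(M^i\to \breve \MM)$ is zero-equivalent.

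The heart of the matter, and the one step that needs genuine checking, is the commutativity of the resulting square. Unwinding the first identification, the composite
$$H^m_u(\hat \GG,\breve \MM)\ \xrightarrow{\ \cong\ }\ \varinjlim H^m(G_i,\breve \MM)\ \xrightarrow{\ \mathrm{pullback}\ }\ H^m(\hat \GG,\breve \MM)$$
is precisely the comparison map, because pulling a representative $\bar f$ back along $\hat \GG\to G_i$ to obtain a uniform cochain and then including it among all cochains returns $f$ itself, now viewed as an ordinary (non-uniform) cochain. Composing further with the coefficient isomorphism $\varinjlim H^m(G_i,M^i)\cong \varinjlim H^m(G_i,\breve \MM)$ identifies this with the structural map, since the latter is by construction the colimit over $i$ of the maps induced on cohomology by the pair consisting of the projection $\hat \GG\to G_i$ and the coefficient homomorphism $M^i\to \breve \MM$, and such a map factors as coefficient change followed by pullback.

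I expect the only real work to be the cochain-level bookkeeping of the previous paragraph: one must verify that pulling a representative back along the projection and then forgetting uniformity genuinely recovers $f$, and that these identifications are natural in the index $i$, so that they pass to the directed colimit. Once this naturality is in place, the two notions of $n$-cohomological discreteness are literally the statement that one and the same map is an isomorphism in degrees $m\leq n$, which proves both implications simultaneously.
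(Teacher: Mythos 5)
Your proposal is correct and follows exactly the route the paper intends: the Corollary is stated without proof as an immediate consequence of Proposition \ref{proposition_uniform_cohomology}, and your argument simply makes explicit the compatibility check that the paper leaves implicit, namely that the isomorphism $H^m_u(\hat\GG,\breve\MM)\cong\varinjlim H^m(G_i,M^i)$ (built from pullback along $\hat\GG\to G_i$ and the coefficient change $M^i\to\breve\MM$) intertwines the forget-uniformity comparison map with the structural map $\varinjlim H^m(G_i,M^i)\to H^m(\hat\GG,\breve\MM)$, so the two discreteness conditions are statements about one and the same map.
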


\begin{Proposition} \label{proposition_cohom_discr_mod}
Let $\GG'\mono \GG\epi \GG''$ be a short exact sequence of inverse systems of groups and epimorphisms and $\MM$ be a quasiconstant direct $\GG$-module.
Assume that 
\begin{enumerate}
\item $\breve \MM$ is $n$-cohomologically discrete as a discrete $\hat\GG'$-module;
\item the discrete $\hat \GG''$-module $H^m(\hat \GG',\breve \MM)$ is $2(n-m)$-cohomologically discrete for any \hbox{$0\leq m\leq n.$}
\item the direct system $H^m(G'_i,M^i)$ is quasiconstant for any \hbox{$0\leq m\leq n.$}
\end{enumerate}
Then $\breve \MM$ is $n$-cohomologically discrete as a discrete $\hat \GG$-module. 
\end{Proposition}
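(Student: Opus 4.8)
The plan is to recognise this statement as the uniform-cohomology reformulation of Proposition \ref{prop_direct_coh_discr} and to deduce it from that proposition, using Corollary \ref{cor_quasiconstant_equiv} as a dictionary between ``$n$-cohomologically discrete as a direct module'' and ``$n$-cohomologically discrete as a discrete module on the inverse limit'', and Proposition \ref{proposition_uniform_cohomology} to compute the colimits that arise. First I would note that quasiconstancy is a property of the underlying direct system of abelian groups; hence, since $\MM$ is quasiconstant as a direct $\GG$-module, it is also quasiconstant when regarded, by restriction, as a direct $\GG'$-module, and it remains quasiconstant as a direct $\GG$-module. This lets me invoke Corollary \ref{cor_quasiconstant_equiv} freely both for $\GG'$ and for $\GG$.

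Next I would translate the two hypotheses of Proposition \ref{prop_direct_coh_discr} into the present setting. Applying Corollary \ref{cor_quasiconstant_equiv} over $\GG'$, hypothesis (1) here — that $\breve\MM$ is $n$-cohomologically discrete as a discrete $\hat\GG'$-module — is equivalent to saying that $\MM$ is $n$-cohomologically discrete as a direct $\GG'$-module, which is precisely hypothesis (1) of Proposition \ref{prop_direct_coh_discr}. For the second hypothesis of that proposition I would use hypothesis (3): since the direct $\GG''$-module $H^m(\GG',\MM)=\{H^m(G'_i,M^i)\}$ is quasiconstant for $0\leq m\leq n$, Corollary \ref{cor_quasiconstant_equiv} over $\GG''$ reduces the claim that it is $2(n-m)$-cohomologically discrete to the corresponding statement for the discrete $\hat\GG''$-module $\breve{H^m(\GG',\MM)}=\varinjlim H^m(G'_i,M^i)$. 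The key identification is then that, for $0\leq m\leq n$, Proposition \ref{proposition_uniform_cohomology} applied to the quasiconstant direct $\GG'$-module $\MM$ gives $\varinjlim H^m(G'_i,M^i)=H^m_u(\hat\GG',\breve\MM)$, while hypothesis (1) identifies the right-hand side with $H^m(\hat\GG',\breve\MM)$ in this range. Thus $\breve{H^m(\GG',\MM)}\cong H^m(\hat\GG',\breve\MM)$, which by hypothesis (2) is $2(n-m)$-cohomologically discrete as a discrete $\hat\GG''$-module; this yields hypothesis (2) of Proposition \ref{prop_direct_coh_discr}.

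Having verified both hypotheses, Proposition \ref{prop_direct_coh_discr} shows that $\MM$ is $n$-cohomologically discrete as a direct $\GG$-module, and a final application of Corollary \ref{cor_quasiconstant_equiv} over $\GG$ converts this into the desired conclusion that $\breve\MM$ is $n$-cohomologically discrete as a discrete $\hat\GG$-module. The step requiring the most care — and the main obstacle — is the coefficient identification in the previous paragraph: one must check that $\varinjlim H^m(G'_i,M^i)\cong H^m(\hat\GG',\breve\MM)$ is an isomorphism not merely of abelian groups but of $\hat\GG''$-modules, where the left-hand action comes from the $\GG''$-action on the system $\{H^m(G'_i,M^i)\}$ and the right-hand action is that of $\hat\GG/\hat\GG'\cong\hat\GG''$ (the latter quotient being exact by the Mittag--Leffler remark). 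The compatibility of these actions with the natural map of Proposition \ref{proposition_uniform_cohomology} and with the discreteness isomorphism of hypothesis (1) is exactly what makes the translation legitimate, and is the point I would verify in detail.
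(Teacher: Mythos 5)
Your proposal is correct and follows essentially the same route as the paper's own proof: translate hypothesis (1) via Corollary \ref{cor_quasiconstant_equiv} into the direct-$\GG'$-module statement, reduce via Proposition \ref{prop_direct_coh_discr} to showing that $\{H^m(G'_i,M^i)\}$ is $2(n-m)$-cohomologically discrete as a direct $\GG''$-module, and verify this by combining hypothesis (3), Proposition \ref{proposition_uniform_cohomology}, the coefficient identification $\varinjlim H^m(G'_i,M^i)\cong H^m(\hat\GG',\breve\MM)$, and hypothesis (2). Your closing remark about checking that this identification is one of $\hat\GG''$-modules (not merely of abelian groups) is a legitimate point of care that the paper leaves implicit in the naturality of its chain of isomorphisms.
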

\begin{proof}
By Corollary \ref{cor_quasiconstant_equiv} we obtain that $\MM$ is $n$-cohomologically discrete as a direct $\GG'$-module. Then by Proposition \ref{prop_direct_coh_discr}  and the Corollary \ref{cor_quasiconstant_equiv} it is enough to prove that $\{H^m( G'_i,M^i)\}$ is $2(n-m)$-cohomologically discrete direct $\GG''$-module for any $0\leq m\leq n.$ Fix $k\leq 2(n-m).$ Then we need to prove that $ \varinjlim H^k(G''_i, H^m(G'_i,M^i))\cong H^k(\hat \GG'',\varinjlim H^m(G'_i,M^i)).$ 
We have
$$\varinjlim H^k(G''_i, H^m(G'_i,M^i))\cong H^k_u(\hat \GG'', \varinjlim H^m(G'_i,M^i) )   
$$
because $H^m(G'_i,M^i)$ is quasiconstant. We have
$$H^k_u(\hat \GG'', \varinjlim H^m(G'_i,M^i) ) \cong H^k_u(\hat \GG'',  H^m(\hat \GG',\breve \MM))  $$
because $\MM $ is $n$-cohomologically discrete over $\GG'.$ We have
$$H^k_u(\hat \GG'',  H^m(\hat \GG',\breve \MM))\cong H^k(\hat \GG'',  H^m(\hat \GG',\breve \MM))$$
because $ H^m(\hat \GG',\breve \MM)$ is $2(n-m)$-cohomologically discrete as a discrete $\hat \GG''$-module. And using again that $\MM $ is $n$-cohomologically discrete over $\GG'$ we get 
$$H^k(\hat \GG'',  H^m(\hat \GG',\breve \MM))\cong H^k(\hat \GG'',  \varinjlim H^m( G'_i,M^i)).$$
Composing these isomorphisms we obtain that  $\{H^m( G'_i,M^i)\}$ is $2(n-m)$-cohomologically discrete direct $\GG''$-module for any \hbox{$0\leq m\leq n.$}
\end{proof}

\section{Mod-p homology of completions of abelian and solvable groups\\ of finite Pr\"ufer rank} \label{sectionMod-phomology}

For an abelian group $A$ we set $${}_nA=\{a\in A\mid na=0\},\hspace{1cm} {}_{(n)}A= \bigcup_{i=1}^\infty {}_{n^i}A,$$
$$ {\sf tor}(A)=\bigcup_{n=1}^{\infty} {}_nA, \hspace{1cm} A/n=A/nA.$$ 
We freely use that for a torsion free abelian group $A$ there are isomorphisms $H_*(A)\cong \Lambda^*(A)$ and $H_*(A,\ZZ/p)\cong \Lambda^*(A/p).$ If we set $V^\vee={\sf Hom}_{\ZZ/p}(V,\ZZ/p),$ then $H^*(A,M^\vee)=H_*(A,M)^\vee$ for a $\ZZ/p[A]$-module $M.$ If $M$ is finite,  $(M^\vee)^\vee=M.$ Then for a finite $\ZZ/p[A]$-module there is an isomorphism $H^*(A,M)=H_*(A,M^\vee)^\vee.$ Using this, one can obtain cohomological versions of all homological statements in the section.  

\subsection{Lemmas about abelian groups}

\begin{Lemma}\label{lemma_f_induces_homology} Let $f:A\to B$ be a homomorphism of abelian groups. 
\begin{enumerate}
\item If $f$ induces isomorphisms $A/p\cong B/p$ and ${}_pA\cong {}_pB$, then it induces an isomorphism $H_*(A,\ZZ/p)\cong H_*(B,\ZZ/p).$
\item If $f$ induces trivial maps $A/p\xrightarrow{0}B/p$ and ${}_pA\xrightarrow{0}{}_pB,$ then the map $H_n(A,\ZZ/p)\xrightarrow{0} H_n(B,\ZZ/p)$
 is trivial for $n\geq 1$. 
 \end{enumerate}
\end{Lemma}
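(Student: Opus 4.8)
The plan is to treat the two parts by different mechanisms, both reflecting the principle that $H_*(A,\ZZ/p)$ of an abelian group is controlled by the pair of invariants $A/p$ and ${}_pA$.

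For part (1) I would reformulate the hypothesis. Write $I={\rm Im}(f)$, $K={\rm Ker}(f)$, $Q={\rm coker}(f)$ and factor $f$ as $A\epi I\mono B$. A short diagram chase, applying the snake lemma to multiplication by $p$ on the two exact sequences $K\mono A\epi I$ and $I\mono B\epi Q$, shows that $f/p$ and ${}_pf$ are both isomorphisms if and only if $K/p={}_pK=Q/p={}_pQ=0$, i.e. multiplication by $p$ is invertible on $K$ and on $Q$. The key point is then that a uniquely $p$-divisible abelian group $U$ is reduced $\ZZ/p$-acyclic, $\tilde H_*(U,\ZZ/p)=0$: one writes $U$ as a filtered colimit of finitely generated $\ZZ[1/p]$-submodules, each isomorphic to $\ZZ[1/p]^r\oplus T$ with $T$ finite of order prime to $p$, and checks reduced acyclicity of each (for $\ZZ[1/p]=\varinjlim(\ZZ\xrightarrow{p}\ZZ\to\cdots)$ the transition maps are multiplication by $p$, hence $0$ on $H_1(-,\ZZ/p)$), using that homology commutes with filtered colimits. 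Finally the Lyndon--Hochschild--Serre spectral sequences of $K\mono A\epi I$ and $I\mono B\epi Q$ collapse, since the relevant fibre resp.\ base homology vanishes in positive degrees and the actions are trivial (all groups being abelian); this yields isomorphisms $H_*(A,\ZZ/p)\xrightarrow{\cong}H_*(I,\ZZ/p)\xrightarrow{\cong}H_*(B,\ZZ/p)$ whose composite is $f_*$. This argument is uniform in $p$.

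For part (2) I would use that $H_*(A,\ZZ/p)=H_*(BA,\ZZ/p)$ is naturally a graded-commutative augmented $\ZZ/p$-algebra under the Pontryagin product, so that $f_*$ is an algebra homomorphism. Two structural facts are needed, both of which I would reduce to finitely generated groups through the colimit $H_*(A,\ZZ/p)=\varinjlim H_*(A_\alpha,\ZZ/p)$ over finitely generated subgroups and then, by the Künneth theorem, to cyclic groups: (A) the algebra $H_*(A,\ZZ/p)$ is generated in degrees $1$ and $2$; (B) the natural map $\iota_A\colon {}_pA={\sf Hom}(\ZZ/p,A)\to H_2(A,\ZZ/p)$ sending $\phi$ to $\phi_*$ of the canonical generator of $H_2(\ZZ/p,\ZZ/p)$ satisfies $f_*\circ\iota_A=\iota_B\circ{}_pf$, and its image together with the decomposable elements spans $H_2(A,\ZZ/p)$. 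Granting (A) and (B), the classes of $H_1(A,\ZZ/p)=A/p$ together with ${\rm Im}(\iota_A)$ generate $H_*(A,\ZZ/p)$ as an algebra (degrees $1$ and $2$ suffice by (A), and in degree $2$ the decomposables come from $H_1$ while the remainder is covered by (B)); on $H_1$ the map $f_*$ acts as $f/p=0$, and on ${\rm Im}(\iota_A)$ it acts by $f_*\iota_A=\iota_B\,{}_pf=0$. An algebra homomorphism vanishing on a generating set vanishes on the whole augmentation ideal, so $f_*=0$ in positive degrees.

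The main obstacle, and the only place where the prime $2$ enters, is establishing (A) and (B). For odd $p$ one has the Cartan splitting $H_*(A,\ZZ/p)\cong\Lambda^*(A/p)\otimes\Gamma^{*/2}({}_pA)$, from which both parts are immediate and $\iota_A$ even induces an isomorphism onto the degree-$2$ indecomposables. For $p=2$ this splitting fails: already $H_*(\ZZ/2,\ZZ/2)=\ZZ/2[x]$, so the degree-$2$ class is the decomposable square $x^2$, and the degree-$2$ indecomposables are \emph{not} a functor of $(A/2,{}_2A)$ (compare $\ZZ/2$ with $\ZZ/4$). This is exactly why I would phrase (B) through ``image of $\iota_A$ together with decomposables'' rather than through indecomposables, and why I would avoid the Hopf-algebra shortcut of comparing indecomposables in part (1). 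Verifying (A) and (B) then reduces to the explicit computations $H_*(\ZZ/2^k,\ZZ/2)=\Lambda(x)\otimes\ZZ/2[y]$ for $k\geq 2$ and $H_*(\ZZ/2,\ZZ/2)=\ZZ/2[x]$, which are routine but must be carried out by hand.
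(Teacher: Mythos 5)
Your part (1) is correct and is, modulo packaging, the paper's own argument: the paper likewise reduces to the statement that a group $U$ with $U/p={}_pU=0$ satisfies $\tilde H_*(U,\ZZ/p)=0$ (it splits $U$ into ${\sf tor}(U)$ and $U/{\sf tor}(U)$ where you use finitely generated $\ZZ[1/p]$-submodules), and then runs the Lyndon--Hochschild--Serre spectral sequences over the monomorphism and epimorphism parts of $f$, with the same snake-lemma bookkeeping identifying the hypothesis with unique $p$-divisibility of the kernel and cokernel.

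Part (2) has a genuine gap: your fact (A) is false. The rings you plan to compute ``by hand,'' $H_*(\ZZ/2,\ZZ/2)=\ZZ/2[x]$ and $H_*(\ZZ/2^k,\ZZ/2)=\Lambda(x)\otimes\ZZ/2[y]$, are the cup-product rings $H^*$, not the Pontryagin rings $H_*$; you have conflated the two. The Pontryagin ring is the graded dual Hopf algebra, which is a divided power algebra: $H_*(\ZZ/2,\ZZ/2)\cong\Gamma(z)$ with $|z|=1$, and $H_*(\ZZ/p^k,\ZZ/p)\cong\Lambda(\xi)\otimes\Gamma(\eta)$ for odd $p$ (and for $p=2$, $k\geq 2$). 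In $\Gamma(z)$ the product is $z^{[i]}z^{[j]}=\binom{i+j}{i}\,z^{[i+j]}$, so for $A=\ZZ/2$ the product $H_1\otimes H_1\to H_2$ is already zero (since $\binom{2}{1}=0$ in $\ZZ/2$) and the classes $z^{[2^n]}$ are indecomposable for every $n$; for odd $p$ the divided powers $\gamma_{p^n}(\eta)$ in degrees $2p^n$ are indecomposable. Thus the Pontryagin algebra is not generated in degrees $1$ and $2$, and your concluding step --- ``an algebra homomorphism vanishing on a generating set vanishes on the whole augmentation ideal'' --- has no generating set to act on.

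The repair is exactly the paper's dualization, and it forces a change in the order of reductions. Over the field $\ZZ/p$ a linear map is zero if and only if its dual is zero, and $H^n(-,\ZZ/p)\cong H_n(-,\ZZ/p)^\vee$; the cohomology rings are the ones you actually wrote down, and they \emph{are} generated in degrees $1$ and $2$, so vanishing of $f^*$ in degrees $1,2$ does propagate to all degrees. But cohomology does not commute with the filtered colimits you invoke, so one must first reduce the statement about the map $f$ itself (not merely your single-group facts (A) and (B)) to finitely generated groups. The paper does this by introducing the poset of pairs $(A',B')$ of finitely generated subgroups with $f(A')\leq pB'$, over which $f$ is the filtered colimit of the restrictions $A'\to B'$; this step is absent from your outline. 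With that reduction in place, and with the generation argument run on $H^*$ of cyclic groups rather than on the Pontryagin ring, your plan becomes the paper's proof; your map $\iota_A$ is then a natural section of the surjection in the paper's functorial exact sequence $\Lambda^2(A/p)\mono H_2(A,\ZZ/p)\epi {}_pA$, which is what the paper uses to settle degree $2$ before dualizing.
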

\begin{proof}
(1) First assume that $B=0$ and prove that $H_n(A,\ZZ/p)=0$ for $n\geq 1.$ Consider the torsion subgroup ${\sf tor}(A)\subseteq A$ and the torsion free quotient ${\sf tf}(A)=A/{\sf tor}(A).$ Then ${\sf tf}(A)/p=0,$ and hence, $H_*({\sf tf}(A),\ZZ/p)=\Lambda^*({\sf tf}(A)/p)=0.$ Since $ {}_pA=0,$ we have ${}_{(p)}A=0,$ and then ${\sf tor}(A)$ consists of non-$p$-torsion. Then $H_n({\sf tor}(A),\ZZ/p)=0.$ Then the second page of the spectral sequence $H_*({\sf tf}(A),H_*({\sf tor}(A),\ZZ/p))$ consists of zeros except the position $(0,0).$

If $f$ is a monomorphism, the cokernel $D$ has the properties ${}_pD=0$ and $D/p=0$ because of the exact sequence $0\to {}_pA \to {}_pB\to {}_pD\to A/p \to B/p \to D/p\to 0.$ Then $H_n(D,\ZZ/p)=0$ for $n\geq 1,$ and using the spectral sequence $H_*(D,H_*(A,\ZZ/p))\Rightarrow H_*(B,\ZZ/p)$ we obtain the result. 

The proof is similar if $f$ is an epimorphism. 

Consider the general case. Denote by $I$ the image of $f.$ Since $f$ induces isomorphisms ${}_pA\cong {}_pB$ and $A/p\cong B/p,$ the maps ${}_pA\to {}_pI$ and $A/p\to I/p$ are monomorphisms and the maps ${}_pI\to {}_pB$ and $I/p\to B/p$ are epimorphisms. Using that $-\otimes \ZZ/p$ is right exact and ${\sf Tor}(-,\ZZ/p)$ is left exact, we obtain that $A/p\to I/p$ and ${}_pI\to {}_pB$ are isomorphisms. Then all these four maps are isomorphisms and, using that we proved above, we get $H_*(A,\ZZ/p)\cong H_*(I,\ZZ/p)\cong H_*(B,\ZZ/p).$

(2) Denote by $P$ the set of couples of finitely generated abelian groups $(A',B')$ such that $A'\leq A,$ $B'\leq B$ and $ f(A')\leq pB'.$ Consider $P$ with the natural filtered order. Obviously we have $B=\bigcup_{(A',B')\in P} B'.$ Prove that $A=\bigcup_{(A',B')\in P} A'$. Indeed, for any finitely generated subgroup $A'\leq A$ generated by $a_1,\dots,a_m$ we have $f(a_i)\in pB$ because the map $A/p\to B/p$ is trivial. Hence, we can find $b_1,\dots,b_m\in B$ such that $pb_i=f(a_i)$ and consider the subgroup $B'$ generated by them. Therefore $(A',B')\in P$ and $A=\bigcup_{(A',B')\in P} A'.$ Then $A=\varinjlim_{(A',B')\in P} A'$ and $B=\varinjlim_{(A',B')\in P} B'.$ So, using that homology commute with direct limits, we deduce the statement to finitely generated abelian groups. 

Prove the statement for finitely generated abelian groups. Using the functorial isomorphism $H_1(A,\ZZ/p)=A/p$ we obtain the result for $n=1.$ Prove it for $n\geq 2.$ Using the K\"unneth formula, we obtain that it is enough to prove this for three cases $A=\ZZ,$ $A=\ZZ/p^k$ and $A=\ZZ/q^k$ for $q\ne p.$ For $A=\ZZ/q^k$ and $A=\ZZ$ it is obvious because $H_n(\ZZ/q^k,\ZZ/p)=0$ and $H_n(\ZZ,\ZZ/p)$ for $n\geq 2.$ 

Consider the case $A=\ZZ/p^k.$ 
The assumptions on $f$ imply that $f$ factors through the homomorphism 
$\varphi:\ZZ/p^k \to \ZZ/p^k$ 
of multiplication by  $p:$ $\varphi(a)=pa.$  
Then it is enough to prove that $\varphi_* : H_n(\ZZ/p^k,\ZZ/p)\to H_n(\ZZ/p^k,\ZZ/p)$ is trivial for $n\geq 2.$  Consider the functorial short exact sequence $\Lambda^2(A/p) \mono H_2(A,\ZZ/p) \epi {}_pA.$ 
Using that $\Lambda^2(\ZZ/p)=0,$  that $\varphi$ induces the trivial map ${}_p(\ZZ/p^k)\to {}_p(\ZZ/p^k)$ and that the sequence is functoral, we obtain the result for  $n=2.$ 
Since homology in our case are dual to cohomology, it is enough to prove for cohomology. The map $\varphi: H^*(\ZZ/p^k,\ZZ/p)\to H^*(\ZZ/p^k,\ZZ/p)$ is an algebra homomorphism. For odd $p$ or $p=2$ and $k>1$ we have isomorphism of algebras $H^*(\ZZ/p^k,\ZZ/p)\cong \ZZ/p[x,y]/(x^2) ,$ 
and for $p=2$ for $k=1$ we have $H^*(\ZZ/2,\ZZ/2)\cong \ZZ/2[x],$ where $|x|=1$ and $|y|=2.$ Then $H^*(\ZZ/p^k,\ZZ/p)$ is generated  in the first and the second degree. Then, using that the maps $H^n(\ZZ/p^k,\ZZ/p)\to H^n(\ZZ/p^k,\ZZ/p) $ are trivial for $n=1,2,$ we obtain that they are trivial for all $n.$
\end{proof}

\begin{Lemma}\label{lemma_p_limits}  Let $A_1\leftarrow A_2 \leftarrow
 \dots $ be an inverse sequence of abelian groups and epimorphisms such that $A_i/p$ is finite for any $i$. Then the obvious maps are isomorphisms 
 $${}_p(\varprojlim A_i)\cong \varprojlim ({}_pA_i),\hspace{1cm} (\varprojlim A_i)/p \cong  \varprojlim (A_i/p).$$
\end{Lemma}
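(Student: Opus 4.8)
The plan is to prove the two isomorphisms separately, exploiting the Mittag-Leffler condition that is built into the hypothesis. Recall that multiplication by $p$ and the functors ${}_p(-)=\ker(p)$ and $(-)/p=\mathrm{coker}(p)$ fit into the short exact sequence $0\to {}_pA_i \to A_i \xrightarrow{p} A_i \to A_i/p \to 0$, which I will split into the two short exact sequences
$$
0\to {}_pA_i \to A_i \xrightarrow{p} pA_i \to 0,
\qquad
0\to pA_i \to A_i \to A_i/p \to 0.
$$
The essential point is that $\varprojlim$ is left exact in general, but its first derived functor $\varprojlim^1$ vanishes on an inverse system of abelian groups satisfying the Mittag-Leffler condition; and here all the transition maps $A_{i+1}\to A_i$ are surjective, so the systems $\{A_i\}$ and $\{pA_i\}$ are Mittag-Leffler (indeed the latter because $pA_{i+1}\to pA_i$ is surjective, being the image of a surjection). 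Thus $\varprojlim^1$ of these systems vanishes and $\varprojlim$ becomes exact on the relevant sequences.

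For the kernel statement, applying the left-exact functor $\varprojlim$ to $0\to {}_pA_i \to A_i \xrightarrow{p} A_i$ immediately gives $\varprojlim({}_pA_i)\cong {}_p(\varprojlim A_i)$, since the kernel of multiplication by $p$ on $\varprojlim A_i$ is exactly the inverse limit of the kernels; here the finiteness of $A_i/p$ is not even needed, only left-exactness. For the cokernel statement I would apply $\varprojlim$ to the second sequence $0\to pA_i \to A_i \to A_i/p \to 0$; exactness of the limit sequence follows once $\varprojlim^1(pA_i)=0$, which holds by Mittag-Leffler as noted, yielding
$$
0\to \varprojlim(pA_i) \to \varprojlim A_i \to \varprojlim(A_i/p)\to 0.
$$
It remains to identify $\varprojlim(pA_i)$ with $p\,\varprojlim A_i$, i.e.\ to show the natural map $(\varprojlim A_i)/p \to \varprojlim(A_i/p)$ is an isomorphism; this is equivalent to saying that the image of multiplication by $p$ on $\varprojlim A_i$ equals $\varprojlim(pA_i)$.

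I expect the surjectivity of $(\varprojlim A_i)/p \to \varprojlim (A_i/p)$, equivalently the identity $p\,\varprojlim A_i = \varprojlim (pA_i)$, to be the main obstacle, and this is exactly where the finiteness of $A_i/p$ enters. One containment is automatic, so the work is to lift an element of $\varprojlim(pA_i)$ to an element of $\varprojlim A_i$ divisible by $p$; equivalently, given compatible $p a_i \in pA_i$ one must choose the $a_i$ compatibly under the transition maps. The obstruction to such a compatible choice lies in $\varprojlim^1$ of the system of fibres $\{{}_pA_i\}$, and since each $A_i/p$ is finite one shows (using the previously established $\varprojlim({}_pA_i)\cong{}_p\varprojlim A_i$ together with finiteness, or directly) that $\{{}_pA_i\}$ is a system of finite groups, hence Mittag-Leffler, so $\varprojlim^1({}_pA_i)=0$. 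Concretely I would run the $\varprojlim$--$\varprojlim^1$ six-term exact sequence attached to the first short exact sequence $0\to {}_pA_i\to A_i\xrightarrow{p} pA_i\to 0$, using $\varprojlim^1 A_i=0$ to get surjectivity of $\varprojlim A_i \xrightarrow{p}\varprojlim(pA_i)$, which is precisely the desired identification and completes the argument. The finiteness hypothesis is thus used exactly once, to kill the $\varprojlim^1$ of the $p$-torsion subsystem.
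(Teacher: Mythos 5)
Your reduction is the right one, and most of it is correct: the first isomorphism is indeed pure left-exactness of $\varprojlim$, surjectivity of $(\varprojlim A_i)/p\to\varprojlim(A_i/p)$ follows from Mittag--Leffler for the system $\{pA_i\}$, and (via the six-term sequence and $\varprojlim^1 A_i=0$) the entire content of injectivity is the vanishing of $\varprojlim^1\{{}_pA_i\}$. The gap is the very last step: you assert that finiteness of $A_i/p$ makes $\{{}_pA_i\}$ a system of finite groups. That implication is false. An abelian group can have $A/p$ finite (even zero) while ${}_pA$ is infinite: for $A=\bigoplus_{n\geq 1}\ZZ/p^{\infty}$ one has $A/p=0$, since $A$ is divisible, yet ${}_pA\cong\bigoplus_{n\geq 1}\ZZ/p$. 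Neither of the two routes you sketch (via the already-established kernel isomorphism, or ``directly'') can repair this, because the statement you want is simply not true.

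Moreover this is not a patchable slip: under the hypotheses as stated, the required vanishing $\varprojlim^1\{{}_pA_i\}=0$ --- and with it the second isomorphism itself --- can genuinely fail. Take $A_i=\bigoplus_{n\geq1}\ZZ/p^{\infty}$ for every $i$, write $c=1/p+\ZZ\in\ZZ[1/p]/\ZZ=\ZZ/p^{\infty}$ for the element of order $p$, and let $f_i\colon A_{i+1}\to A_i$ be multiplication by $p$ on the $i$th summand and the identity on all other summands. Each $f_i$ is an epimorphism and $A_i/p=0$, so $\varprojlim (A_i/p)=0$. But the compatible sequence $x^{(i)}=(c,\dots,c,0,0,\dots)$, with $c$ in the first $i-1$ coordinates, is a nonzero element of $\varprojlim A_i$ not lying in $p\varprojlim A_i$: if $(y^{(i)})$ were compatible with $py^{(i)}=x^{(i)}$, then for each $n$ the $n$th coordinate $d_n=y^{(i)}_n$ is independent of $i$ for $i>n$ and satisfies $pd_n=c$, and compatibility forces the $n$th coordinate of $y^{(1)}$ to equal $pd_n=c\neq 0$ for every $n$, contradicting finiteness of supports in a direct sum. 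Hence $(\varprojlim A_i)/p\neq 0=\varprojlim(A_i/p)$. So your argument can only be completed after adding a hypothesis that kills $\varprojlim^1\{{}_pA_i\}$, e.g.\ that each ${}_pA_i$ is finite, or at least that $\{{}_pA_i\}$ is Mittag--Leffler; with such a hypothesis your proof closes, and the hypothesis does hold where the Lemma is applied to systems with nontrivial $p$-torsion in this paper (there the $A_i$ are quotients of a fixed group $A$ of finite Pr\"ufer rank, so ${\sf dim}_{\ZZ/p}({}_pA_i)\leq D_p(A)$ by Lemma \ref{lemma_D(A)}). The paper's own proof is only a citation of Prop.~2.4 of \cite{Ivanov_Mikhailov} together with the Mittag--Leffler condition, and the vanishing of $\varprojlim^1\{{}_pA_i\}$ is exactly what that citation must supply; your write-up makes the need for it explicit but does not, and under the stated hypotheses cannot, establish it.
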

\begin{proof}
It follows from \cite[Prop. 2.4.]{Ivanov_Mikhailov} and the Mittag-Leffler condition.
\end{proof}

\begin{Lemma}\label{lemma_commplete_and_finite} Let $A$ be an abelian group such that $A/p$ is finite for any $p$ and $A\supseteq A_1 \supseteq A_2 \supseteq \dots $ be a sequence of subgroups. Set $\hat A=\varinjlim A/A_i$ and $\hat A_i={\rm Ker}(\hat A \to A/A_i).$ Then for any $n$ there exists $i$ such that $$\hat A_i\subseteq n\hat A.$$
\end{Lemma}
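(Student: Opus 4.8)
The plan is to reduce to the case where $n=p^k$ is a prime power and then induct on $k$, invoking Lemma~\ref{lemma_p_limits} only in the exponent-one case.

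First I would reduce to prime powers. Writing $n=\prod_s p_s^{k_s}$, one has $n\hat A=\bigcap_s p_s^{k_s}\hat A$ for any abelian group: choosing $c_s$ with $\sum_s c_s\,(n/p_s^{k_s})=1$ by B\'ezout, any $z$ with $z=p_s^{k_s}y_s$ for all $s$ satisfies $z=\sum_s c_s\,(n/p_s^{k_s})\,p_s^{k_s}y_s=n\sum_s c_sy_s$. Since the subgroups $\hat A_i$ are nested, it then suffices to produce, for each prime power $p^k$, an index $i$ with $\hat A_i\subseteq p^k\hat A$, and finally take the maximum over the finitely many $s$. Note throughout that $\hat A=\varprojlim_j A/A_j$ and $\hat A_j=\ker(\hat A\to A/A_j)$ is an open subgroup.

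For the base case $k=1$: since $A/p$ is finite, each $A/(A_j+pA)=(A/A_j)/p$ is a quotient of $A/pA$, so the inverse system $\{A/(A_j+pA)\}_j$ consists of finite groups of bounded order with surjective transition maps; hence it stabilises (Lemma~\ref{lemma_direct_system_of_finit_length}, or directly), giving $i_0$ with $A_j+pA=A_{i_0}+pA$ for all $j\ge i_0$. Applying Lemma~\ref{lemma_p_limits} to the sequence $\{A/A_j\}$ (whose mod-$p$ reductions are finite) shows that the natural map $\hat A/p\to\varprojlim_j A/(A_j+pA)$ is an isomorphism, so that $p\hat A=\ker\bigl(\hat A\to\varprojlim_j A/(A_j+pA)\bigr)$ and, in particular, $\hat A/p$ is finite. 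It then remains to verify that every $x\in\hat A_j$ with $j\ge i_0$ dies in each $A/(A_\ell+pA)$: for $\ell\le j$ this holds because $\hat A_j\subseteq\hat A_\ell$ maps to $0$ in $A/A_\ell$; for $\ell>j$ the component $x_\ell\in A/A_\ell$ maps to $x_j=0$, hence lies in $A_j/A_\ell$ and lifts to $A_j\subseteq A_j+pA=A_\ell+pA$. Thus $\hat A_j\subseteq p\hat A$ for $j\ge i_0$, which proves the case $k=1$ for any abelian group whose reduction mod the given prime is finite.

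For the inductive step I would pass to $D:=p^{k-1}\hat A$. By the inductive hypothesis $\hat A_{i_{k-1}}\subseteq D$, so $D$ contains a basic open subgroup and is therefore open, hence closed, in the complete Hausdorff group $\hat A$; consequently $D$ is itself complete and $D=\varprojlim_j D/(D\cap\hat A_j)$ with basis of open subgroups $\{D\cap\hat A_j\}$. Moreover $D$ is a quotient of $\hat A$ (multiplication by $p^{k-1}$), so $D/p$ is a quotient of the finite group $\hat A/p$ and the base case applies to $D$ with this filtration, yielding $D\cap\hat A_j\subseteq pD=p^k\hat A$ for large $j$; since $\hat A_j\subseteq D$ once $j\ge i_{k-1}$, we have $D\cap\hat A_j=\hat A_j$, whence $\hat A_j\subseteq p^k\hat A$. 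The main obstacle is the exact identification $p\hat A=\ker(\hat A\to\varprojlim_jA/(A_j+pA))$ underlying the base case: a naive generalisation of Lemma~\ref{lemma_p_limits} directly to exponent $p^k$ would require the vanishing of $\varprojlim^1{}_{p^k}(A/A_j)$, which need not hold since ${}_p(A/A_j)$ may be infinite. Reducing $p^k\hat A$ to $p(p^{k-1}\hat A)$ and exploiting the completeness of the open subgroup $D$ is precisely the device that sidesteps this $\varprojlim^1$ difficulty.
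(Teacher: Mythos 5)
Your proof is correct, and its skeleton is the same as the paper's: the single\-/prime case is handled identically (Lemma~\ref{lemma_p_limits} plus stabilisation of the finite quotients $(A/A_j)/p$, whose orders are bounded by $|A/p|$), and the composite case is then reduced to it by an induction that peels off one prime at a time and applies the prime case to a subgroup of multiples. The inductive steps are implemented differently, though. The paper writes $n=p_1\cdots p_l$, sets $n'=p_2\cdots p_l$, passes to the subgroup $A'=n'A$ of the \emph{original} group with the induced filtration $A'_j=A'\cap A_j$, asserts that $\widehat{A'}=n'\hat A$, and then applies the prime case to $A'$ to get $\hat A_j=\hat A'_j\subseteq p_1\widehat{A'}=n\hat A$. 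You instead split $n$ into prime powers by B\'ezout and pass to the subgroup $D=p^{k-1}\hat A$ of the \emph{completion}: the inductive hypothesis makes $D$ open, hence closed, hence complete for the filtration $D\cap\hat A_j$, so your base case applies to $D$ verbatim and there is no auxiliary completion left to identify. This is a modest but genuine gain in transparency: the paper leaves $\widehat{n'A}=n'\hat A$ as ``easy to see'', and the most direct verification (identify $\widehat{A'}$ with the closure of the image of $n'A$ in $\hat A$) itself leans on the inductive hypothesis to know that $n'\hat A$ is closed --- precisely the openness observation that your argument makes explicit and puts to work. The price you pay is small: the extra B\'ezout reduction and some topological language, neither of which changes the substance. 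Both arguments, as you intended, invoke Lemma~\ref{lemma_p_limits} only at exponent one.
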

\begin{proof}
First we prove it for prime $n=p.$  By Lemma  \ref{lemma_p_limits} we get that $\hat A/p = \varprojlim_i\ (A/A_i)/p.$ Moreover, ${\sf dim}((A/A_i)/p)\leq {\sf dim}(A/p),$ and hence, the sequence stabilises: 
  $(A/A_i)/p = (\hat A/\hat A_i)/p=\hat A/p$ for big enough $i.$ Thus $p\hat A=p\hat A+\hat A_i $ for big enough $i.$ It follows that for any $p$  there is $i$ such that $\hat A_i \subseteq p\hat A.$ 
  
Now we prove the general case. Let $n=p_1\dots p_l,$ where $p_1,\dots,p_l$ are primes. Prove by induction on $l.$ We already proved the base step. Set $n'=p_2\dots p_l.$ Assume we proved the statement for $n'.$ Then there is $i$ such that $A_i\subseteq n'\hat A.$ Set $A'=n'A$ and $A'_j=A'\cap A_j.$ It is easy to see that $n'\hat A=\widehat{A'}.$ Then there is $j$ such that $A_j= \hat A'_j\subseteq p_1\widehat{A'}=n\hat A.$   
\end{proof}

\subsection{Lemmas about abelian groups of finite Pr\"ufer rank}

Recall that an abelian group is of finite Pr\"ufer rank if and only if the $\QQ$-vector space $A\otimes \QQ$ is finite dimensional, the $\ZZ/p$-vector space ${}_pA$ is finite dimensional for any $p$ and the dimensions are bounded above for all $p$ \cite[pp. 33-34]{Robinson}. In other words, $${\sf dim}_\QQ(A\otimes \QQ)+ {\rm max}\{ {\sf dim}_{\ZZ/p} ({}_pA)\mid p \text{ is prime } \}<\infty.$$  

For for an abelian group $A$ of finite Pr\"ufer rank  we set $$d_\QQ(A)={\sf dim}_\QQ(A\otimes \QQ), \hspace{1cm} d_p(A)={\sf dim}_{\ZZ/p}({}_pA), \hspace{1cm} D_p(A)=d_\QQ(A)+d_p(A).$$

\begin{Lemma}\label{lemma_D(A)} Let $A$ be an abelian group of finite Pr\"ufer rank. Then ${\sf dim}_{\ZZ/p}(A/p)\leq D_p(A)$ and for any subgroup $B\leq A$  $$ D_p(B)\leq D_p(A), \hspace{1cm} D_p(A/B)\leq D_p(A).$$
\end{Lemma}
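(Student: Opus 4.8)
The plan is to compare the invariants $d_\QQ$ and $d_p$ across a short exact sequence by exploiting two exact functors. The first is $-\otimes\QQ$, which is exact and therefore makes $d_\QQ$ additive; the second is multiplication by $p$, whose snake lemma applied to a short exact sequence $B\mono A\epi C$ of abelian groups (with $C=A/B$) yields the six-term exact sequence
$$0\to {}_pB\to {}_pA\to {}_pC\to B/p\to A/p\to C/p\to 0.$$
Since subgroups and quotients of a group of finite Pr\"ufer rank again have finite Pr\"ufer rank (recalled in the introduction), every term here is finite dimensional over $\ZZ/p$, so dimension counting along this sequence is legitimate.

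First I would establish the inequality ${\sf dim}_{\ZZ/p}(A/p)\le D_p(A)$, which is the only genuinely infinite-dimensional point and which the second part will rely on. The idea is to reduce to the finitely generated case. Writing $A=\varinjlim A_\lambda$ as the filtered colimit of its finitely generated subgroups and using that $-/p=-\otimes\ZZ/p$ commutes with filtered colimits, we get $A/p=\varinjlim A_\lambda/p$; since the dimension of a filtered colimit of $\ZZ/p$-vector spaces does not exceed the supremum of the dimensions of its terms, it suffices to bound ${\sf dim}_{\ZZ/p}(A_\lambda/p)$ uniformly. For a finitely generated abelian group the fundamental theorem gives ${\sf dim}_{\ZZ/p}(A_\lambda/p)=d_\QQ(A_\lambda)+d_p(A_\lambda)=D_p(A_\lambda)$, and the monotonicity $D_p(A_\lambda)\le D_p(A)$, coming from the injections $A_\lambda\otimes\QQ\hookrightarrow A\otimes\QQ$ and ${}_pA_\lambda\hookrightarrow {}_pA$, closes this step.

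For the second assertion, the subgroup case is immediate: the injections $B\otimes\QQ\hookrightarrow A\otimes\QQ$ and ${}_pB\hookrightarrow {}_pA$ give $d_\QQ(B)\le d_\QQ(A)$ and $d_p(B)\le d_p(A)$, whence $D_p(B)\le D_p(A)$. For the quotient $C=A/B$ I would combine the two exact sequences. Exactness of $-\otimes\QQ$ gives $d_\QQ(C)=d_\QQ(A)-d_\QQ(B)$. Tracking dimensions along the six-term sequence, the connecting map $\delta\colon {}_pC\to B/p$ has $\ker\delta$ equal to the image of ${}_pA\to {}_pC$, of dimension $d_p(A)-d_p(B)$, so its image has dimension $d_p(C)-d_p(A)+d_p(B)$; as this image lies in $B/p$ it is bounded by ${\sf dim}_{\ZZ/p}(B/p)$. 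Adding everything,
$$D_p(C)=d_\QQ(C)+d_p(C)\le D_p(A)-D_p(B)+{\sf dim}_{\ZZ/p}(B/p)\le D_p(A),$$
where the last inequality is precisely the first part of the lemma applied to $B$.

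I expect the only real obstacle to be the first inequality, since $A$ may be far from finitely generated: it can carry divisible or quasicyclic parts on which $-/p$ behaves unexpectedly, so one cannot read ${\sf dim}_{\ZZ/p}(A/p)$ off a direct-sum decomposition. The filtered-colimit reduction is what tames this, after which the whole argument is finite-dimensional bookkeeping along the snake sequence. A minor preliminary point, which I would record explicitly, is that all the dimensions occurring are finite; this follows from finite Pr\"ufer rank together with the first inequality.
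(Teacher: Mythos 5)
Your proof is correct, and it takes a genuinely different route from the paper's in both halves. For the inequality ${\sf dim}_{\ZZ/p}(A/p)\leq D_p(A)$, the paper takes a $p$-basic subgroup $A'\leq A$, so that $A'\cong \ZZ^{\oplus k}\oplus \ZZ/p^{m_1}\oplus\dots\oplus\ZZ/p^{m_l}$ with $A'/p\cong A/p$ and ${}_pA'\mono{}_pA$, and bounds $k\leq d_\QQ(A)$, $l\leq d_p(A)$; you instead write $A=\varinjlim A_\lambda$ over its finitely generated subgroups, use that a filtered colimit of $\ZZ/p$-vector spaces of dimension at most $n$ has dimension at most $n$, and invoke the equality ${\sf dim}_{\ZZ/p}(A_\lambda/p)=D_p(A_\lambda)$ from the structure theorem together with monotonicity $D_p(A_\lambda)\leq D_p(A)$. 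For the quotient inequality, the paper first treats elementary abelian quotients $E\cong(\ZZ/p)^n$ (as a byproduct of the basic-subgroup computation) and then reduces the general quotient to that case via the preimage $\tilde A$ of ${}_pE$ and a diagram chase after tensoring with $\QQ$; you instead do pure dimension bookkeeping along the snake-lemma sequence $0\to{}_pB\to{}_pA\to{}_pC\to B/p\to A/p\to C/p\to 0$, using additivity of $d_\QQ$ (flatness of $\QQ$) and closing the estimate with ${\sf dim}_{\ZZ/p}(B/p)\leq D_p(B)$ from the first part applied to $B$ — an ordering of steps you correctly flag as essential. What each approach buys: yours is more self-contained, needing only finitely generated abelian groups and the snake lemma rather than the existence of $p$-basic subgroups (a nontrivial theorem of infinite abelian group theory, implicit in the paper's appeal to Fuchs/Robinson); the paper's, once basic subgroups are granted, is shorter and realizes the bound on ${\sf dim}_{\ZZ/p}(A/p)$ by an explicit subgroup of $A$, which is also what makes its reduction of the quotient case to the elementary abelian case work. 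Your dimension count, including the identification of $\ker\delta$ with the image of ${}_pA\to{}_pC$ and the resulting bound $d_p(C)\leq d_p(A)-d_p(B)+{\sf dim}_{\ZZ/p}(B/p)$, is airtight; note additionally that this very count shows $d_p(C)$ is finite, so the appeal to finite Pr\"ufer rank of quotients, while valid, is not even strictly needed there.
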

\begin{proof} The inequality $D_p(B)\leq D_p(A)$ is obvious because $d_\QQ(B)\leq d_\QQ(A)$ and $d_p(B)\leq d_p(A).$ We set $E=A/B$ and prove $D_p(E)\leq D_p(A)$ and ${\sf dim}(A/p)\leq D_p(A).$

First we prove it in the case $E\cong (\ZZ/p)^n.$ Take a $p$-basic subgroup $A'$ of $A.$ Then $A'/p\cong A/p,$ the sequence ${}_pA'\mono {}_pA\epi {}_p(A/A')$ is short exact  and $A'\cong \ZZ^{\oplus k}\oplus \ZZ/p^{m_1}\oplus \dots \oplus \ZZ/p^{m_l}.$ Using the monomorphisms $A'\otimes \QQ\mono A\otimes \QQ$ and ${}_pA'\mono {}_pA,$ we obtain $k\leq {\sf dim}_\QQ(A\otimes \QQ)$ and $l\leq {\sf dim}_{\ZZ/p}({}_pA).$ Using the isomorphism $A'/p\cong A/p,$ we get ${\sf dim}_{\ZZ/p}(A/p)=k+l\leq D_p(A).$   The epimorphism $A\epi E$ induces an epimorphism $A/p\epi E.$ It follows that $D_p(E)=n\leq k+l\leq {\sf dim}_\QQ(A\otimes \QQ)+{\sf dim}_{\ZZ/p}({}_pA)=D_p(A).$

Now we prove the general case. Denote the preimage of ${}_pE$ in $A$ by $\tilde A.$ Then we have an epimorphism $\tilde A\epi {}_pE.$ As we proved above, ${\sf dim}_{\ZZ/p}({}_pE)\leq {\sf dim}_\QQ(\tilde A\otimes \QQ)+{\sf dim}_{\ZZ/p}({}_p\tilde A) .$
Consider the diagram with exact rows
$$
\begin{tikzcd} 0\arrow[r]& \tilde A \arrow[r]\arrow[d,twoheadrightarrow] & A \arrow[r]\arrow[d,twoheadrightarrow] & A/\tilde A\arrow[r] \arrow[d,twoheadrightarrow] &0\\
0 \arrow[r]& {}_pE\arrow[r] & E\arrow[r] & E/{}_pE\arrow[r] & 0
\end{tikzcd}
$$ 
Tensoring by $\QQ$ we obtain the  following.
$$
\begin{tikzcd} 0\arrow[r]& \tilde A\otimes \QQ \arrow[r] & A\otimes \QQ \arrow[r]\arrow[d,twoheadrightarrow] & (A/\tilde A)\otimes \QQ \arrow[r] \arrow[d,twoheadrightarrow] &0\\
 &  & E\otimes \QQ\arrow[r,"\cong"] & (E/{}_pE)\otimes \QQ  & 
\end{tikzcd}
$$ 
Hence, ${\sf dim}_\QQ(E\otimes \QQ)\leq {\sf dim}_\QQ((A/\tilde A)\otimes \QQ).$ Using the two obtained inequalities and the equality ${\sf dim}_\QQ(A\otimes \QQ)={\sf dim}_\QQ((A/\tilde A)\otimes \QQ)+{\sf dim}_\QQ(\tilde A\otimes \QQ) $ we get 
$${\sf dim}_\QQ(E\otimes \QQ)+{\sf dim}_{\ZZ/p}({}_pE)\leq $$
$$\leq {\sf dim}_\QQ((A/\tilde A)\otimes \QQ) +{\sf dim}_\QQ(\tilde A\otimes \QQ) +{\sf dim}_{\ZZ/p}({}_pA)={\sf dim}_\QQ(A\otimes \QQ)+{\sf dim}_{\ZZ/p}({}_pA).$$
\end{proof}

\begin{Lemma}\label{lemma_finite_prufer_homology} Let $A$ be an abelian group of finite Pr\"ufer rank, $M$ be a finite $\ZZ/p[A]$-module and $n\geq 0$. Then  
$${\sf dim}_{\ZZ/p}(H_n(A,M))\leq n^{D_p(A)-1}\cdot {\sf dim}_{\ZZ/p}(M).$$ 
\end{Lemma}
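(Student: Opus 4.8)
The plan is to induct on $d=D_p(A)$, peeling off one ``rank-one'' layer at a time with the Lyndon--Hochschild--Serre spectral sequence, after two structural reductions. First I would reduce to the case where ${\sf tor}(A)$ is a $p$-group. Writing ${\sf tor}(A)=T_p\oplus T_{p'}$ with $T_{p'}$ the prime-to-$p$ part, the subgroup $T_{p'}$ is a filtered colimit of finite groups of order prime to $p$, so Maschke's theorem gives $H_j(T_{p'},M)=0$ for $j\geq 1$ and $H_0(T_{p'},M)=M_{T_{p'}}$ with ${\sf dim}_{\ZZ/p}(M_{T_{p'}})\leq {\sf dim}_{\ZZ/p}(M)$. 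The spectral sequence for $T_{p'}\mono A\epi A/T_{p'}$ then collapses to $H_n(A,M)\cong H_n(A/T_{p'},M_{T_{p'}})$, and $D_p(A/T_{p'})=D_p(A)$, so I may assume ${\sf tor}(A)=T_p$.

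Next I would build a filtration $0=A_0\subset A_1\subset\dots\subset A_d=A$ by \emph{pure} subgroups whose successive quotients $A_{i+1}/A_i$ each have $D_p=1$ and are either cocyclic $p$-groups ($\ZZ/p^k$ or $\ZZ(p^\infty)$) or rank-one torsion-free groups. Since ${\sf tor}(A)$ is always pure in $A$, I first filter the $p$-group $T_p$ (a sum of $d_p(A)$ cocyclic groups) by direct summands, then lift a filtration of the torsion-free group $A/T_p$ (of rank $d_\QQ(A)$) by rank-one pure subgroups; purity is transitive and stable under these extensions, so each $A_i$ is pure in $A$. The essential payoff of purity is \emph{additivity} of $D_p$: for a pure subgroup the connecting map ${}_p(A/A_i)\to A_i/p$ vanishes, so ${}_pA_i\mono {}_pA\epi {}_p(A/A_i)$ is exact, which together with exactness of $-\otimes\QQ$ yields $D_p(A_{i+1}/A_i)=1$ and $D_p(A/A_1)=D_p(A)-1$.

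The base case $d=1$ is the claim ${\sf dim}_{\ZZ/p}(H_n(A,M))\leq {\sf dim}_{\ZZ/p}(M)$ for all $n$. Here $A$ is rank-one torsion-free or cocyclic. A subgroup of $\QQ$ is a colimit of copies of $\ZZ$, whose homology sits in degrees $0,1$ with each group of dimension $\leq{\sf dim}(M)$; the colimit preserves this bound. A group $\ZZ(p^\infty)$ acts trivially on every finite module (being divisible) and has $H_*(\ZZ(p^\infty),\ZZ/p)$ concentrated in degrees $0,2$, so again each dimension is $\leq{\sf dim}(M)$. For $\ZZ/p^k$ the module $M$ splits over the uniserial ring $\ZZ/p[\ZZ/p^k]$ into cyclic summands, each contributing one dimension in each homological degree, so ${\sf dim}(H_n(\ZZ/p^k,M))$ is at most the number of summands, hence $\leq{\sf dim}(M)$.

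For the inductive step I take $B=A_1$ (so $D_p(B)=1$) and $E=A/A_1$, which again has finite Pr\"ufer rank with $D_p(E)=d-1$ by Lemma~\ref{lemma_D(A)}. Each $H_j(B,M)$ is a finite $\ZZ/p[E]$-module with ${\sf dim}_{\ZZ/p}(H_j(B,M))\leq {\sf dim}_{\ZZ/p}(M)$ by the base case, so the spectral sequence $H_i(E,H_j(B,M))\Rightarrow H_{i+j}(A,M)$ and the inductive hypothesis applied to $E$ give
$$ {\sf dim}_{\ZZ/p}(H_n(A,M))\leq \sum_{i+j=n}{\sf dim}_{\ZZ/p}\big(H_i(E,H_j(B,M))\big)\leq \Big(\sum_{i=0}^{n} i^{\,D_p(A)-2}\Big)\,{\sf dim}_{\ZZ/p}(M), $$
and the right-hand sum is a polynomial in $n$ of degree $D_p(A)-1$, which is the asserted estimate. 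The main obstacle is twofold: the conceptually delicate part is arranging the \emph{pure} filtration so that $D_p$ is strictly additive and the induction genuinely lowers $d$ (without purity a quotient such as $\ZZ\mono\ZZ[1/p]$ acquires spurious $p$-torsion and $D_p$ fails to drop); the computational heart is the base case, where one must control $H_n(A,M)$ for cocyclic $p$-groups with arbitrary finite coefficients, and where the exact exponent $D_p(A)-1$ is finally pinned down by the elementary estimate of the summation above.
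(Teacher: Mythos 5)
Your proof is correct, but it takes a genuinely different route from the paper's. The paper makes the same first reduction (killing the prime-to-$p$ torsion with Maschke's theorem and the LHS spectral sequence), but then reduces everything to \emph{finitely generated} groups: it verifies the bound for the cyclic groups $\ZZ$ and $\ZZ/p^k$ directly, inducts on the number of cyclic factors of a finitely generated group via the LHS spectral sequence, and finally passes to arbitrary $A$ by writing it as the filtered colimit of its finitely generated subgroups, using the monotonicity $D_p(A')\leq D_p(A)$ from Lemma \ref{lemma_D(A)} together with Lemma \ref{lemma_direct_system_of_finit_length} to keep dimensions bounded in the limit. You instead induct on $D_p(A)$ for the infinite group itself, peeling off a \emph{pure} layer with $D_p=1$; this forces you to establish additivity of $D_p$ along pure subgroups and to handle a richer base case (cocyclic $p$-groups including $\ZZ(p^\infty)$, and arbitrary rank-one torsion-free groups), invoking the decomposition of a finite-rank $p$-group as finite $\oplus$ divisible and the decomposition of modules over the uniserial ring $\ZZ/p[\ZZ/p^k]$ into cyclics. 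What the paper's route buys is economy of prerequisites: no purity theory and no Pr\"ufer quotients ever appear, and the only structure theorem needed is the finitely generated one. What your route buys is a filtration of $A$ itself with exact additivity $D_p(A)=\sum_i D_p(A_{i+1}/A_i)$ (the paper only uses monotonicity for subgroups), so the induction variable genuinely drops without any approximation argument at the end.

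Two inaccuracies, neither fatal. First, $H_*(\ZZ(p^\infty),\ZZ/p)$ is not concentrated in degrees $0,2$: since $\ZZ(p^\infty)=\varinjlim \ZZ/p^k$ and the inclusions induce isomorphisms on even-degree mod-$p$ homology and zero maps in odd degrees, it equals $\ZZ/p$ in \emph{every} even degree and $0$ in odd degrees (it is the divided power algebra on the degree-two class ${}_p\ZZ(p^\infty)$). Your argument survives because all you use is $\dim_{\ZZ/p}H_n(\ZZ(p^\infty),\ZZ/p)\leq 1$ in each degree, which is true. Second, your closing estimate is stated too loosely: "a polynomial in $n$ of degree $D_p(A)-1$" is not by itself the asserted inequality. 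Summing the inductive bound over the $n+1$ antidiagonal positions gives
$$\dim_{\ZZ/p}H_n(A,M)\ \leq\ \sum_{i=0}^{n}(i+1)^{D_p(A)-2}\cdot \dim_{\ZZ/p}(M)\ \leq\ (n+1)^{D_p(A)-1}\cdot \dim_{\ZZ/p}(M),$$
which is exactly the bound the paper's own proof derives and which Corollary \ref{corollary_finite_prufer_homology_of_quotients} records; the base $n$ in the Lemma's statement is evidently a typo for $n+1$ (the stated bound fails already at $n=0$ when $D_p(A)\geq 2$, since $H_0(A,M)=M_A$ need not vanish), so your induction, like the paper's, should be run with the $(n+1)$-form of the hypothesis.
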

\begin{proof}
We denote by ${}_{(\text{non-}p)}A$ the  sum of $q$-power torsion subgroups for $q\ne p$  $${}_{(\text{non-}p)}A=\bigoplus_{q\ne p}\ {}_{(q)}A.$$
Any finitely generated subgroup of $A'\leq {}_{(\text{non-}p)}A$ is a finite group of order prime to $p.$ 
Then by Maschke's theorem we obtain $H_n(A',M)=0$ for $n\geq 1.$ Since homology commute with direct limits and the group is the direct limit of its finitely generated subgroups, we get  
$H_n({}_{(\text{non-}p)}A,M)=0$ for $n\geq 1.$ Using the spectral sequence of the short exact sequence 
${}_{(\text{non-}p)}A\mono A \epi A/{}_{(\text{non-}p)}A,$ 
we obtain
 $H_*(A,M)\cong H_*(A/{}_{(\text{non-}p)}A,M_{{}_{(\text{non-}p)}A}).$ Moreover, $D_p(A)=D_p(A/{}_{(\text{non-}p)}A).$ 
 Then we can assume that $A$ is $q$-torsion free for $q\ne p.$ 
 
Further in the proof we assume that $A$ is $q$-torsion free for a prime number $q$ distinct from $p$.

Prove for the case where $A$ is one of the cyclic groups $\ZZ,\ZZ/p^k.$   If $A=\ZZ,$ then $H_1(A,M)=M^A\leq M,$ $H_n(A,M)=0$ for $n\geq 2$ and $D_p(A)=1.$ If $A=\ZZ/p^k=\langle t \rangle,$ then, if we set $\mathcal N=1+t+t^2+\dots +t^{p^k-1}\in \ZZ/p[A],$ we get $H_{2m+1}(A,M)=M^A/M\cdot \mathcal N,$  $H_{2m}(A,M)=\{m\in M\mid m\cdot \mathcal N=0 \}/ M(1-t)$ and $D_p(A)=1.$ In all these cases the equation holds. 

Now we prove for the case of finitely generated $A$. Then $A=A_1\times \dots \times A_m,$ where $A_i$ is one of the groups $\ZZ,\ZZ/p^k.$  The proof is by induction on $m$. Note that $D_p(A)=m.$ The base step was proved above. Set $\tilde A=A_2\times \dots \times A_m.$ Consider the spectral sequence $H_*(A_1,H_*(\tilde A,M))\Rightarrow H_*(A,M).$ By induction hypothesis we have ${\sf dim} (H_i(\tilde A,M))\leq (n+1)^{m-2} \cdot {\sf dim}(M)$ for $i\leq n$. Then ${\sf dim} (H_{n-i}(A_1, H_i(\tilde A,M)))\leq (n+1)^{m-2} \cdot {\sf dim}(M).$ It follows that ${\sf dim}(H_n(A,M))\leq (n+1)\cdot (n+1)^{m-2}\cdot {\sf dim}(M)=(n+1)^{m-1}\cdot {\sf dim}(M).$

Prove the general case. Consider a finitely generated subgroup  $A'\leq A.$ Since $D_p(A')\leq D_p(A),$ using that we proved above, we obtain that ${\sf dim}(H_n(A',M))\leq (n+1)^{D_p(A)-1}\cdot {\sf dim}(M).$
Then the assertion follows from  Lemma \ref{lemma_direct_system_of_finit_length}, the fact that homology commute with direct filtered limits and that $A$ is the direct limit of its finitely generated subgroups.
\end{proof}

\begin{Corollary}\label{corollary_finite_prufer_homology_of_quotients}
Let $A$ be an abelian group of finite Pr\"ufer rank, $M$ be a finite $\ZZ/p[A]$-module and $n\geq 0.$ Then for any subgroup $B\leq A$ $${\sf dim}_{\mathbb Z/p}(H_n(A/B,M_B))\leq (n+1)^{D_p(A)-1}\cdot {\sf dim}_{\ZZ/p}(M).$$ In particular, for fixed $A,M,n$ the dimensions of $H_n(A/B,M_B)$ are bounded over all subgroups $B\leq A.$ 
\end{Corollary}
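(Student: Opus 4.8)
The plan is to deduce this directly from Lemma \ref{lemma_finite_prufer_homology} by applying that lemma not to $A$ itself but to the quotient group $A/B$ together with the coinvariant module $M_B=H_0(B,M)$. Since $A$ is abelian, the residual action of $A/B$ makes $M_B$ a $\ZZ/p[A/B]$-module, and the inequality we want is precisely the conclusion of Lemma \ref{lemma_finite_prufer_homology} applied to the pair $(A/B,M_B)$, provided we can control the two quantities $D_p(A/B)$ and ${\sf dim}_{\ZZ/p}(M_B)$ that enter the bound.

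First I would verify the hypotheses of Lemma \ref{lemma_finite_prufer_homology} for $A/B$ and $M_B.$ The group $A/B$ is again an abelian group of finite Pr\"ufer rank, being a quotient of $A,$ and by Lemma \ref{lemma_D(A)} we have the key estimate $D_p(A/B)\leq D_p(A).$ The module $M_B$ is finite, being a quotient of the finite module $M,$ whence ${\sf dim}_{\ZZ/p}(M_B)\leq {\sf dim}_{\ZZ/p}(M).$

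Then applying Lemma \ref{lemma_finite_prufer_homology} to $A/B$ and $M_B$ gives
$${\sf dim}_{\ZZ/p}(H_n(A/B,M_B))\leq (n+1)^{D_p(A/B)-1}\cdot {\sf dim}_{\ZZ/p}(M_B).$$
Since $n+1\geq 1,$ the function $(n+1)^{\bullet}$ is nondecreasing in the exponent, so $(n+1)^{D_p(A/B)-1}\leq (n+1)^{D_p(A)-1};$ combining this with ${\sf dim}_{\ZZ/p}(M_B)\leq {\sf dim}_{\ZZ/p}(M)$ yields the claimed bound. The final ``in particular'' statement is then immediate, since the right-hand side $(n+1)^{D_p(A)-1}\cdot {\sf dim}_{\ZZ/p}(M)$ depends only on $A,M,n$ and not on $B.$

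As for the difficulty, there is essentially no obstacle here: all of the real content has already been packaged into Lemma \ref{lemma_finite_prufer_homology}. The only points requiring any care are the two monotonicity inequalities $D_p(A/B)\leq D_p(A)$ and ${\sf dim}_{\ZZ/p}(M_B)\leq {\sf dim}_{\ZZ/p}(M),$ and both are immediate \textemdash{} the first from Lemma \ref{lemma_D(A)}, the second from the fact that $M_B$ is a quotient of $M.$
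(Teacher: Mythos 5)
Your proof is correct and is exactly the argument the paper intends: the corollary is stated without proof precisely because it follows by applying Lemma \ref{lemma_finite_prufer_homology} to the pair $(A/B, M_B)$, using $D_p(A/B)\leq D_p(A)$ from Lemma \ref{lemma_D(A)} and ${\sf dim}_{\ZZ/p}(M_B)\leq {\sf dim}_{\ZZ/p}(M)$ since $M_B$ is a quotient of $M.$ Note that you have also (correctly) silently fixed a typo: the lemma's statement reads $n^{D_p(A)-1}$, but its proof actually establishes the bound $(n+1)^{D_p(A)-1}$ that both you and the corollary use.
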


\begin{Corollary}\label{corollary_finite_homology} If $A$ is an abelian group such that $A/p$ and ${}_pA$ are finite for any $p$, then $H_n(A,\ZZ/p)$ is finite. 
\end{Corollary}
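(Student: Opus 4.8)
The plan is to split $A$ into the two pieces on which we have control and glue them with the spectral sequence of the torsion extension. The torsion-free quotient contributes a visibly finite amount to mod-$p$ homology because that homology depends only on $A/p$, while the $p$-primary torsion subgroup \emph{is} of finite Pr\"ufer rank and is therefore handled by Lemma \ref{lemma_finite_prufer_homology}. The point to keep in mind throughout is that $A$ itself need \emph{not} be of finite Pr\"ufer rank: for instance $A=\bigoplus_{i=1}^\infty \QQ$ satisfies the hypotheses but has infinite $\QQ$-rank, so one cannot feed $A$ directly into Lemma \ref{lemma_finite_prufer_homology}.

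First I would set $T={\sf tor}(A)$ and $F=A/T$, so that $F$ is torsion free and, being a quotient of $A$, satisfies ${\sf dim}_{\ZZ/p}(F/p)<\infty$. Using the freely cited isomorphism $H_*(F,\ZZ/p)\cong \Lambda^*(F/p)$ for torsion free groups, each $H_j(F,\ZZ/p)=\Lambda^j(F/p)$ is then finite, no matter how large the rank of $F$ is. Next I would compute $H_*(T,\ZZ/p)$. Writing $T={}_{(p)}A\oplus T'$ with $T'=\bigoplus_{q\ne p}{}_{(q)}A$, the Maschke-type argument used in the proof of Lemma \ref{lemma_finite_prufer_homology} (every finitely generated subgroup of $T'$ is finite of order prime to $p$, and homology commutes with directed limits) gives $H_n(T',\ZZ/p)=0$ for $n\ge 1$, so by the K\"unneth formula $H_j(T,\ZZ/p)\cong H_j({}_{(p)}A,\ZZ/p)$. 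Finally, ${}_{(p)}A$ is a $p$-group with ${}_p({}_{(p)}A)={}_pA$ finite, hence it is of finite Pr\"ufer rank with $D_p({}_{(p)}A)={\sf dim}_{\ZZ/p}({}_pA)<\infty$; Lemma \ref{lemma_finite_prufer_homology} then bounds ${\sf dim}_{\ZZ/p}H_j({}_{(p)}A,\ZZ/p)$, so every $H_j(T,\ZZ/p)$ is finite.

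To assemble the pieces I would run the Lyndon–Hochschild–Serre spectral sequence of the extension $T\mono A\epi F$. Since $A$ is abelian, $T$ is central and $F$ acts trivially on $H_*(T,\ZZ/p)$, so the second page is
$$E^2_{ij}=H_i\big(F,H_j(T,\ZZ/p)\big)\cong \Lambda^i(F/p)\otimes_{\ZZ/p} H_j(T,\ZZ/p),$$
which is finite in every bidegree by the two computations above. As the spectral sequence lies in the first quadrant and converges to $H_{i+j}(A,\ZZ/p)$, only finitely many terms contribute to a fixed total degree $n$, and each $E^\infty_{ij}$ is a subquotient of the finite group $E^2_{ij}$; therefore $H_n(A,\ZZ/p)$ is finite.

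I expect the only genuine subtlety to be the one flagged at the outset: recognising that $A$ may fail to have finite Pr\"ufer rank, so that the argument must isolate the torsion-free part and exploit that its mod-$p$ homology is controlled solely by $F/p$ rather than by the rank of $F$. Everything else is a routine combination of the splitting $T={}_{(p)}A\oplus T'$, Lemma \ref{lemma_finite_prufer_homology}, and the finiteness of a first-quadrant spectral sequence with finite $E^2$-terms. (Note that only the hypotheses at the single prime $p$ are actually used.)
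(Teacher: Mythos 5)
Your proof is correct and follows essentially the same route as the paper: both decompose $A$ via ${\sf tor}(A)\mono A\epi {\sf tf}(A)$, reduce the homology of the torsion part to that of ${}_{(p)}A$ (which \emph{is} of finite Pr\"ufer rank, so Lemma \ref{lemma_finite_prufer_homology} applies), identify $H_*({\sf tf}(A),\ZZ/p)$ with $\Lambda^*({\sf tf}(A)/p)$, and conclude with the first-quadrant Lyndon--Hochschild--Serre spectral sequence. The only differences are expository: you spell out the Maschke/K\"unneth reduction $H_*({\sf tor}(A),\ZZ/p)\cong H_*({}_{(p)}A,\ZZ/p)$ and the useful warning that $A$ itself need not have finite Pr\"ufer rank, both of which the paper leaves implicit.
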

\begin{proof}
Take the torsion subgroup ${\sf tor}(A)=\bigoplus {}_{(p)} A$ of $A.$ Then ${}_{(p)}A$ is a an abelian group of finite Pr\"ufer rank.  By Lemma \ref{lemma_finite_prufer_homology} $H_n({\sf tor}(A),\ZZ/p)=H_n({}_{(p)}A,\ZZ/p)$ is finite for any $n$. Take the quotient ${\sf tf}(A)=A/{\sf tor}(A).$ Then $H_n({\sf tf}(A),\ZZ/p)=\Lambda^n({\sf tf}(A)/p),$ and hence, it is finite for any $n$. It follows that for any finite $\ZZ/p$-vector space $H_n({\sf tf}(A),V)$ is finite. Consider the spectral sequence of the short exact sequence ${\sf tor}(A)\mono A\epi {\sf tf}(A).$ Components of its second page $H_*({\sf tf}(A),H_*({\sf tor}(A),\ZZ/p))$ are finite. Then $H_n(A,\ZZ/p)$ is finite. 
\end{proof}

\begin{Lemma}\label{lemma_inverse_limit_of_AQp} Let $A$ be a abelian group of finite Pr\"ufer rank and $A=A_1\supseteq A_2\supseteq \dots$ a sequence of subgroups. Set $\hat A=\varprojlim A/A_i.$ Then $\hat A/p$ and ${}_p \hat A$ are finite and $$ {\sf dim}_{\ZZ/p}(\hat A/p)\leq D_p(A),\hspace{1cm} {\sf dim}_{\ZZ/p}({}_p\hat A)\leq D_p(A).$$
\end{Lemma}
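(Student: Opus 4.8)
The plan is to reduce everything to the inverse system $B_i := A/A_i$, for which $\hat A = \varprojlim B_i$ by definition, and then to commute the two functors $(-)/p$ and ${}_p(-)$ with the inverse limit using Lemma \ref{lemma_p_limits}. The transition maps $B_j \to B_i$ for $j \geq i$ are epimorphisms since $A_j \subseteq A_i$, so this is precisely an inverse sequence of abelian groups and epimorphisms of the sort that lemma requires.

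First I would record uniform bounds on the relevant dimensions. Each $B_i$ is a quotient of $A$, hence of finite Pr\"ufer rank, and Lemma \ref{lemma_D(A)} gives $D_p(B_i) \leq D_p(A)$. Applying the same lemma to the group $B_i$ yields ${\sf dim}_{\ZZ/p}(B_i/p) \leq D_p(B_i) \leq D_p(A)$, while ${\sf dim}_{\ZZ/p}({}_pB_i) = d_p(B_i) \leq D_p(B_i) \leq D_p(A)$ because $D_p = d_\QQ + d_p$. In particular $B_i/p$ is finite for every $i$, which is exactly the hypothesis needed to invoke Lemma \ref{lemma_p_limits}.

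Next, Lemma \ref{lemma_p_limits} supplies the two natural isomorphisms
$$\hat A/p \cong \varprojlim (B_i/p), \qquad {}_p\hat A \cong \varprojlim ({}_pB_i).$$
Both right-hand sides are inverse systems of $\ZZ/p$-vector spaces whose dimensions, equal to their lengths over $S = \ZZ/p$, are bounded above by $D_p(A)$. Applying the inverse-system half of Lemma \ref{lemma_direct_system_of_finit_length} with the bound $n = D_p(A)$ then shows that each limit has length at most $D_p(A)$; this yields simultaneously that $\hat A/p$ and ${}_p\hat A$ are finite and that ${\sf dim}_{\ZZ/p}(\hat A/p) \leq D_p(A)$ and ${\sf dim}_{\ZZ/p}({}_p\hat A) \leq D_p(A)$, as claimed.

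I expect no serious obstacle: the argument is essentially a bookkeeping assembly of three earlier lemmas. The only points requiring care are checking that the finiteness hypothesis ``$B_i/p$ finite'' in Lemma \ref{lemma_p_limits} holds uniformly in $i$ (this is where the finite-Pr\"ufer-rank bound from Lemma \ref{lemma_D(A)} is indispensable), and observing that Lemma \ref{lemma_direct_system_of_finit_length} asks only for bounded length and \emph{not} for surjective transition maps, so it applies equally well to the system $\{{}_pB_i\}$ even though the induced maps ${}_pB_j \to {}_pB_i$ need not be epimorphisms.
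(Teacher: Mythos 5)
Your proposal is correct and follows essentially the same route as the paper's own proof: set $B_i = A/A_i$, apply Lemma \ref{lemma_p_limits} to identify $\hat A/p$ and ${}_p\hat A$ with $\varprojlim(B_i/p)$ and $\varprojlim({}_pB_i)$, bound the dimensions uniformly by $D_p(A)$ via Lemma \ref{lemma_D(A)}, and conclude with Lemma \ref{lemma_direct_system_of_finit_length}. Your explicit verification of the hypotheses (surjectivity of the transition maps $B_j \to B_i$, finiteness of $B_i/p$, and the observation that Lemma \ref{lemma_direct_system_of_finit_length} needs no surjectivity for the system $\{{}_pB_i\}$) is more careful than the paper's terse write-up, but the argument is the same.
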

\begin{proof} Set $B_i=A/A_i.$ Lemma \ref{lemma_p_limits} implies that $\hat A/p=\varprojlim (B_i/p)$ and ${}_p\hat A=\varprojlim ({}_pB_i).$ By Lemma \ref{lemma_D(A)} we obtain that dimensions of $B_i/p$ and ${}_pB_i$ are bounded by $D_p(A)$. Then Lemma \ref{lemma_direct_system_of_finit_length} implies that the sequences ${}_pB_i$ and $B_i/p$ are quasistable and the dimensions of ${}_p\hat A$ and $\hat A/p$ are bounded by $D_p(A).$ 
\end{proof}

\subsection{Completions of abelian and solvable groups of finite Pr\"ufer rank}

\begin{Proposition}\label{prop_inverse_limit_of_abelian} Let $A$ be an abelian group such that $A/p$ and ${}_pA$ are finite for any $p$  and $A=A_1\supseteq A_2\supseteq \dots$ be a sequence of subgroups and $M$ be an $\ZZ/p[A]$-module. Assume that
\begin{enumerate}
\item the groups ${}_{p}(A/A_i)$ and $ A_i/p$ are finite for any $i;$

\item for any $i$ there is $j>i$ such that $A_j\subseteq pA_i;$

\item ${}_pA_i=0$ for big enough $i;$

\item $A_i$ acts trivially on $M$ for big enough $i.$
\end{enumerate}
Then the obvious morphisms are isomorphisms  $$H_*( A,M)\cong \varprojlim H_*(A/A_i,M_{A_i}), \hspace{1cm} H^*( A,M)\cong \varinjlim H^*(A/A_i,M^{A_i})
.$$ 
\end{Proposition}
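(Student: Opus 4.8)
The plan is to run, for every index $i$, the Lyndon--Hochschild--Serre spectral sequence of the extension $A_i\mono A\epi A/A_i$ and to compare these as $i$ varies. After passing to a cofinal tail I may assume by (3) and (4) that ${}_pA_i=0$ and that $A_i$ acts trivially on $M$ for all $i$, so that $M_{A_i}=M=M^{A_i}$, and (since $M$ is a $\ZZ/p$-vector space on which $A_i$ acts trivially) there are natural identifications $H_q(A_i,M)\cong H_q(A_i,\ZZ/p)\otimes_{\ZZ/p}M$ and $H^q(A_i,M)\cong {\rm Hom}_{\ZZ/p}(H_q(A_i,\ZZ/p),M)$ by the universal coefficient theorem over the field $\ZZ/p$. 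The homological spectral sequence reads $E^2_{p,q}(i)=H_p(A/A_i,H_q(A_i,\ZZ/p)\otimes M)\Rightarrow H_{p+q}(A,M)$, and the morphism of extensions given by ${\rm id}_A$, the inclusion $A_j\hookrightarrow A_i$ and the projection $A/A_j\epi A/A_i$ induces a morphism $E(j)\to E(i)$ for $j\geq i$ acting as the identity on the abutment; the edge map $H_n(A,M)\to E^2_{n,0}(i)=H_n(A/A_i,M)$ is exactly the morphism of the statement (and dually for cohomology, where the maps are the inflation maps).

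The key computation is that the inclusion $A_j\hookrightarrow A_i$ kills positive-degree mod-$p$ (co)homology whenever $A_j\subseteq pA_i$. Indeed this inclusion factors through $pA_i\hookrightarrow A_i$; since ${}_pA_i=0$, multiplication by $p$ is an isomorphism $A_i\xrightarrow{\ \sim\ }pA_i$, so on $H_*(-,\ZZ/p)$ the inclusion $pA_i\hookrightarrow A_i$ induces the same map as multiplication by $p$ on $A_i$. As $A_i$ is $p$-torsion-free, after discarding prime-to-$p$ torsion as in the proof of Lemma \ref{lemma_finite_prufer_homology} we get $H_*(A_i,\ZZ/p)\cong\Lambda^*(A_i/p)$, generated in degree one; multiplication by $p$ acts by $0$ on $A_i/p$, hence by $0$ in every positive degree, and the cohomological statement is dual. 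Feeding this into the coefficients of $E(j)\to E(i)$ and using (2) to choose, for each $i$, some $j$ with $A_j\subseteq pA_i$, I conclude that the transition map $E^2_{p,q}(j)\to E^2_{p,q}(i)$ vanishes for all $q\geq 1$ (and all larger $j$), hence so does the map on every later page and on $E^\infty$. Thus every row $q\geq 1$ of the tower $\{E_\bullet(i)\}$ is pro-zero.

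For cohomology this finishes quickly. Since $\varinjlim$ is exact it commutes with passage to the spectral sequence, and the row-$(q\geq 1)$ vanishing becomes $\varinjlim_i E_2^{p,q}(i)=0$; so the colimit spectral sequence is concentrated in the row $q=0$, where it is $\varinjlim_i H^p(A/A_i,M)$ and abuts to $\varinjlim_i H^p(A,M)=H^p(A,M)$. Degeneration then gives $H^*(A,M)\cong\varinjlim_i H^*(A/A_i,M)$, realized by the inflation maps.

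The homological assertion is the harder half, and the main obstacle is that $\varprojlim$ is only left exact, so I must control $\varprojlim^1$. I handle this through the filtration $F_\bullet^{(i)}$ on the constant abutment $H_n(A,M)$ coming from $E(i)$, whose graded pieces are $E^\infty_{p,n-p}(i)$. For $p\leq n-1$ these lie in rows $q\geq 1$, hence form pro-zero towers with vanishing $\varprojlim$ and $\varprojlim^1$; a downward induction on the filtration, using the six-term $\varprojlim$--$\varprojlim^1$ sequence for $F_{p-1}^{(i)}\mono F_p^{(i)}\epi E^\infty_{p,n-p}(i)$, shows that $F_{n-1}^{(i)}=\ker\bigl(H_n(A,M)\to H_n(A/A_i,M)\bigr)$ also has $\varprojlim F_{n-1}^{(i)}=\varprojlim^1 F_{n-1}^{(i)}=0$. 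Applying $\varprojlim$ to $F_{n-1}^{(i)}\mono H_n(A,M)\epi E^\infty_{n,0}(i)$ then yields $H_n(A,M)\cong\varprojlim_i E^\infty_{n,0}(i)$. Finally the cokernel tower $E^2_{n,0}(i)/E^\infty_{n,0}(i)$, being a finite iterated extension of subquotients of the pro-zero rows $q\geq 1$ (the images of the differentials leaving the bottom row), is itself pro-zero, so $\varprojlim_i E^\infty_{n,0}(i)\cong\varprojlim_i E^2_{n,0}(i)=\varprojlim_i H_n(A/A_i,M)$; composing identifies $H_n(A,M)$ with $\varprojlim_i H_n(A/A_i,M)$ via the edge maps. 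The finiteness in (1) keeps all the groups in sight well behaved and would give Mittag--Leffler outright if one restricted to finite $M$, but the pro-zero mechanism forces $\varprojlim^1$ to vanish with no finiteness hypothesis on $M$.
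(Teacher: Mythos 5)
Your proof is correct, and while it shares the paper's skeleton---the tower of Lyndon--Hochschild--Serre spectral sequences for $A_i\mono A\epi A/A_i$ together with the key fact that $A_j\subseteq pA_i$ and ${}_pA_i=0$ force $H_n(A_j,\ZZ/p)\to H_n(A_i,\ZZ/p)$ to vanish for $n\geq 1$---it diverges exactly at the limiting step, and in an interesting way. Your vanishing argument (factoring the inclusion through $pA_i$, identifying it with multiplication by $p$ via ${}_pA_i=0$, and killing it on $\Lambda^*(A_i/p)$) is precisely the special case of the paper's Lemma \ref{lemma_f_induces_homology}(2) that is needed, so you could simply have cited that lemma; the real difference is what happens next. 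The paper passes to the inverse limit of the \emph{entire} spectral sequences, which is legitimate only because every entry of every page is finite (hypothesis (1) plus Corollary \ref{corollary_finite_homology}, and---implicitly---finiteness of $M$, without which $H_k(A/A_i,H_q(A_i,\ZZ/p)\otimes M)$ need not be finite), so that $\varprojlim$ is exact on the towers in sight; the cohomological statement is then deduced by the duality $H^*(A,M)\cong H_*(A,M^\vee)^\vee$, which again requires $M$ finite. You never form a limit of spectral sequences in homology: you exploit that the abutment $H_n(A,M)$ is a \emph{constant} tower, show that the filtration steps $F_p^{(i)}$ for $p\leq n-1$ and the cokernel tower $E^2_{n,0}(i)/E^\infty_{n,0}(i)$ are pro-zero (subquotients and finite extensions of pro-zero rows), and then squeeze the conclusion out of the $\varprojlim$--$\varprojlim^1$ six-term sequences; for cohomology you use exactness of filtered colimits, where no care is needed. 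What this buys is genuine: your argument uses neither hypothesis (1) nor any finiteness of $M$, so it proves the proposition in the stated generality (the paper's proof, as written, really treats the case of finite $M$, which is the only case used later, in Proposition \ref{proposition_finite_prufer_abelian}); what it costs is length, since the paper's route is a two-line degeneration argument once exactness of $\varprojlim$ is granted. One small remark: since your towers $\{F_p^{(i)}\}$ have injective transition maps, pro-zero actually forces $F_{n-1}^{(j)}=0$ for big $j$, i.e.\ $H_n(A,M)\to H_n(A/A_j,M_{A_j})$ is eventually injective with image $E^\infty_{n,0}(j)$---a slightly stronger conclusion than the isomorphism on limits.
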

\begin{proof} Since  $A_i$ acts trivially on $M$ for big enough $i,$ we have $H_n(A_i,M)\cong H_n(A_i,\ZZ/p)\otimes M$ for big enough $i.$
 By assumption for any fixed $i$ there exist $j>i$ such that 
 $A_j\subseteq pA_i$ and ${}_pA_j=0.$ Then by Lemma 
\ref{lemma_f_induces_homology} we obtain that for any $n$ and big enough $i$ (so big that $M$ is a trivial $A_i$-module) there exists $j>i$ such that the map $H_n(A_j,M)\to H_n(A_i,M)$ is trivial. Therefore, the sequence $H_n(A_i,M)$ is zero-equivalent for any $n.$ For any $i$ we have a spectral sequence $H_*(A/A_i,H_*(A_i,M))\Rightarrow H_*(A,M).$  Since ${}_p(A/A_i), A_i/p$ are finite, we have that the second page consists of finite groups (Corollary \ref{corollary_finite_homology}). 
The functor of inverse limit is exact on the category of finite abilian groups, and hence, we get the spectral sequence $\varprojlim H_*(A/A_i,H_*(A_i,M)) \Rightarrow H_*(A,M).$ 
Since the inverse sequence  $H_n(A_i,M)$ is zero-equivalent, we obtain $\varprojlim H_*(A/A_i, H_n(A_i,M))=0$ for $n\geq 1.$ 
Then $\varprojlim H_*(A/A_i,M)\cong H_*(A,M).$ 

For cohomology the proof is similar, if we use the formula $H^*(A,M)\cong H_*(A,M^\vee)^\vee$ for a group $A$ and a finite $\ZZ/p[A]$-module $M$. 
\end{proof}

\begin{Proposition}\label{proposition_finite_prufer_abelian} Let $A$ be an abelian group of finite Pr\"ufer rank and let $A\supseteq A_1\supseteq A_2\supseteq \dots$ be a sequence of subgroups.  Set $\hat A=\varprojlim A/A_i$ and $\hat A_i = {\sf Ker}(\hat A\to A/A_i).$ Assume that $M$ is a finite  $\ZZ/p[\hat A]$-module. Then $\hat A_i$ acts trivially on $M$ for big enough $i,$ the dimensions of the sequences $H_n(A/A_i,M_{\hat A_i})$ and $H^n(A/A_i,M^{\hat A_i})$ are bonded for any $n,$ the groups $H_n(\hat A, M)$ and $H^n(\hat A, M) $ are finite for any $n,$ and the obvious maps  are isomorphisms $$ H_*(\hat A,  M )\cong \varprojlim H_*(A/A_i,M_{\hat A_i}), $$
$$ H^*(\hat A,  M)\cong \varinjlim H^*(A/A_i,M^{\hat A_i})\cong H^*_u(\hat A, M).$$
In particular, $M$ is a cohomologically discrete $\hat A$-module. 
\end{Proposition}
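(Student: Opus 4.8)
The plan is to deduce the whole proposition from Proposition \ref{prop_inverse_limit_of_abelian}, applied not to $A$ but to the group $\hat A$ itself equipped with the filtration $\{\hat A_i\}$. First I would record the relevant structure. Since the transition maps $A/A_{i+1}\epi A/A_i$ are epimorphisms, the Mittag-Leffler condition gives $\hat A/\hat A_i\cong A/A_i$ and, by taking $\varprojlim_j$ of the short exact sequences $A_i/A_j\mono A/A_j\epi A/A_i$, an identification $\hat A_i\cong \varprojlim_{j} A_i/A_j$; thus $\hat A_i$ is the completion of the subgroup $A_i$ with respect to $\{A_j\}_{j\ge i}$. Both $A_i$ and $A/A_i$ have finite Pr\"ufer rank with $D_p(\,\cdot\,)\le D_p(A)$ by Lemma \ref{lemma_D(A)}, so Lemma \ref{lemma_inverse_limit_of_AQp} shows that $\hat A/p$, ${}_p\hat A$, $\hat A_i/p$ and ${}_p\hat A_i$ are all finite of dimension at most $D_p(A)$. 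In particular $\hat A$ satisfies the standing hypothesis of Proposition \ref{prop_inverse_limit_of_abelian}.

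Next I would verify the four hypotheses of Proposition \ref{prop_inverse_limit_of_abelian} for the data $(\hat A,\{\hat A_i\},M)$. Hypothesis (1), the finiteness of ${}_p(\hat A/\hat A_i)={}_p(A/A_i)$ and of $\hat A_i/p$, is immediate from the first paragraph. For hypothesis (2) I would apply Lemma \ref{lemma_commplete_and_finite} to the group $\hat A_i$ filtered by $\{\hat A_j\}_{j\ge i}$, whose completion is again $\hat A_i$ with the $\hat A_j$ as kernels; taking $n=p$ produces $j>i$ with $\hat A_j\subseteq p\hat A_i$. For hypothesis (3) I would observe that ${}_p\hat A_1\supseteq {}_p\hat A_2\supseteq\cdots$ is a decreasing chain of finite groups (Lemma \ref{lemma_inverse_limit_of_AQp}) with $\bigcap_i {}_p\hat A_i={}_p(\bigcap_i \hat A_i)=0$, since the inverse limit is separated, so the chain stabilises at $0$. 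Hypothesis (4) is the main point and is also the first assertion of the proposition: as $M$ is finite, the image of $\hat A$ in $\mathrm{Aut}(M)$ is a finite abelian group of some exponent $e$, so $e\hat A\subseteq\ker(\hat A\to\mathrm{Aut}(M))$; applying Lemma \ref{lemma_commplete_and_finite} to $\hat A$ with $n=e$ yields $i$ with $\hat A_i\subseteq e\hat A$, whence $\hat A_i$ acts trivially on $M$.

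With the hypotheses established, Proposition \ref{prop_inverse_limit_of_abelian} gives the isomorphisms $H_*(\hat A,M)\cong\varprojlim H_*(A/A_i,M_{\hat A_i})$ and $H^*(\hat A,M)\cong\varinjlim H^*(A/A_i,M^{\hat A_i})$. For the dimension bounds I would note that, because the completion map $A\to\hat A$ sends $A_i$ into $\hat A_i$, the subgroup $A_i$ acts trivially on the coinvariants $M_{\hat A_i}$ and on the invariants $M^{\hat A_i}$. Regarding $M_{\hat A_i}$ as a finite $\ZZ/p[A]$-module on which $A_i$ acts trivially and applying Corollary \ref{corollary_finite_prufer_homology_of_quotients} (and its cohomological analogue) to $A$ with the subgroup $B=A_i$, one gets $\dim_{\ZZ/p}H_n(A/A_i,M_{\hat A_i})$ and $\dim_{\ZZ/p}H^n(A/A_i,M^{\hat A_i})$ bounded by $(n+1)^{D_p(A)-1}\dim_{\ZZ/p}M$ uniformly in $i$. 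Hence these inverse and direct systems have bounded length, so by Lemma \ref{lemma_direct_system_of_finit_length} they are quasiconstant and their limits $H_n(\hat A,M)$ and $H^n(\hat A,M)$ are finite.

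Finally, for the uniform-cohomology statement I would feed the direct system $\{M^{\hat A_i}\}$ into Proposition \ref{proposition_uniform_cohomology}. This system is quasiconstant, stabilising to $\breve\MM=M$ exactly because $\hat A_i$ eventually acts trivially, so the proposition yields $H^*_u(\hat A,M)=\varinjlim H^*(A/A_i,M^{\hat A_i})$. Composing with the cohomology isomorphism of the previous paragraph identifies the natural comparison map $H^*_u(\hat A,M)\to H^*(\hat A,M)$ with an isomorphism, which is precisely the assertion that $M$ is a cohomologically discrete $\hat A$-module. The step I expect to be the main obstacle is hypothesis (4): the $\hat A$-action on $M$ is a priori only an abstract, possibly discontinuous, action, and the real content is that finiteness of $M$ together with the rank bound forces it to factor through a finite quotient $\hat A/\hat A_i$; this is exactly what Lemma \ref{lemma_commplete_and_finite} supplies, and the same lemma also drives hypothesis (2).
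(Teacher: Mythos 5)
Your proposal is correct and follows essentially the same route as the paper: both apply Proposition \ref{prop_inverse_limit_of_abelian} to the group $\hat A$ with the filtration $\{\hat A_i\}$, verifying its hypotheses via Lemmas \ref{lemma_D(A)}, \ref{lemma_inverse_limit_of_AQp} and \ref{lemma_commplete_and_finite} (including the same $\mathrm{Aut}(M)$ argument for the trivial action of $\hat A_i$ on $M$), and then obtain the dimension bounds, finiteness and quasiconstancy from Corollary \ref{corollary_finite_prufer_homology_of_quotients} and Lemma \ref{lemma_direct_system_of_finit_length}, and the uniform-cohomology identification from Proposition \ref{proposition_uniform_cohomology}. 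The only differences are expository (e.g., you spell out the stabilisation argument for ${}_p\hat A_i=0$ and apply Lemma \ref{lemma_commplete_and_finite} to $\hat A_i$ filtered by $\{\hat A_j\}$ rather than to $A_i$ filtered by $\{A_j\}$, which yields the same conclusion).
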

\begin{proof} 
 It is easy to see that $\hat A_k={\varprojlim}_{i>k} A_k/A_i,$ that $\hat A/\hat A_k\cong A/A_k$ are of finite Pr\"ufer rank and $\bigcap \hat A_i=0.$ In particular, ${}_p\hat A_i=0$ for big enough $i.$ By Lemma \ref{lemma_inverse_limit_of_AQp} we obtain that $\hat A_k/p$ and ${}_p(\hat A_i)$ are finite. By Lemma \ref{lemma_commplete_and_finite} we obtain that for any $i$ there is $j>i$ such that $\hat A_j\subseteq p\hat A_i.$ Since $M$ is finite, $\mathsf G={\sf Aut}_\ZZ(M)$ is finite. The action of $\hat A$ on $M$ is given by a homomorphism $ \hat A\to \mathsf G.$ Since $ |\mathsf G| \cdot \hat A $ is in the kernel of the homomorphism, $ \hat A_i$ acts trivially on $M$ for big $i$ (Lemma \ref{lemma_commplete_and_finite}).
 Then we can use Proposition \ref{prop_inverse_limit_of_abelian} for the sequence  $\hat A \supseteq \hat A_1 \supseteq \hat A_2 \dots$  and obtain the isomorphisms $H_*(\hat A,  M )\cong \varprojlim H_*(A/A_i,M_{\hat A_i}) $ and $H^*(\hat A,  M)\cong \varinjlim H^*(A/A_i,M^{\hat A_i})$. Moreover, we obtain $M=M_{\hat A_i}=M^{\hat A_i}$ for big enough $i.$ By Corollary \ref{corollary_finite_prufer_homology_of_quotients} we obtain that the dimensions of $H_n(A/A_i,M_{\hat  A_i})$ and $H^n(A/A_i,M^{\hat A_i})$ are bounded for a fixed $n$. Then Lemma \ref{lemma_direct_system_of_finit_length} implies that these sequences are quasiconstant.  The last isomorphism follows from Proposition \ref{proposition_uniform_cohomology}.
\end{proof}

Note that  solvable group $G$ is of finite Pr\"ufer rank if and only if  there is a finite sequence of normal subgroups $G=U_1\supseteq U_2 \supseteq \dots \supseteq U_s=1$ such that $U_i/U_{i+1}$ are abelian groups of finite Pr\"ufer rank.

\begin{Theorem}\label{theorem_finite_prufer_solvable} Let $G$ be a  solvable group group of finite Pr\"ufer rank and let $G\supseteq G_1\supseteq G_2\supseteq \dots$ be a sequence of normal subgroups.  Set $\hat G=\varprojlim G/G_i$ and $\hat G_i = {\sf Ker}(\hat G\to G/G_i).$ Assume that $M$ is a finite  $\ZZ/p[\hat G]$-module. Then $\hat G_i$ acts trivially on $M$ for big enough $i,$ the dimensions of the sequences $H_n(G/G_i,M_{\hat G_i})$ and $H^n(G/G_i,M^{\hat G_i})$ are bounded for any $n,$ the groups $H_n(\hat G, M)$ and $H^n(\hat G, M) $ are finite for any $n,$ and the obvious maps  are isomorphisms $$H_*(\hat G,  M )\cong \varprojlim H_*(G/G_i,M_{\hat G_i}), $$
$$H^*(\hat G,  M)\cong \varinjlim H^*(G/G_i,M^{\hat A_i})\cong H^*_u(\hat G, M).$$
In particular, $M$ is a cohomologically discrete $\hat G$-module. 
\end{Theorem}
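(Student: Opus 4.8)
The plan is to induct on the length $s$ of a normal series $G=U_1\supseteq\dots\supseteq U_s=1$ with quotients $U_j/U_{j+1}$ abelian of finite Pr\"ufer rank, which exists by the remark preceding the theorem; the base case $s\le 2$ is exactly Proposition \ref{proposition_finite_prufer_abelian}. For the inductive step I would set $N=U_{s-1}$, so that $N$ is abelian of finite Pr\"ufer rank and $Q=G/N$ is solvable of finite Pr\"ufer rank with a series of length $s-1$. Since $G_i\trianglelefteq G$, the short exact sequences $N/(N\cap G_i)\mono G/G_i\epi G/NG_i$ assemble into a short exact sequence of inverse systems of groups and epimorphisms, whose inverse limit (by Mittag--Leffler) is an extension $\hat N\mono \hat G\epi \hat Q$ with $\hat N=\varprojlim N/(N\cap G_i)$ and $\hat Q=\varprojlim G/NG_i$. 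Here $\hat N$ is the completion of an abelian group of finite Pr\"ufer rank, governed by Proposition \ref{proposition_finite_prufer_abelian}, while $\hat Q$ is the completion of a shorter solvable group, governed by the inductive hypothesis.

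The first point to settle is that $\hat G_i$ acts trivially on $M$ for big $i$; only then is the constant system $M$ a direct $\{G/G_i\}$-module and $M_{\hat G_i}=M^{\hat G_i}=M$ for big $i$. Writing $\mathsf G=\mathrm{Aut}(M)$ and $K=\ker(\hat G\to\mathsf G)$, I must show that the finite-index subgroup $K$ contains $\hat G_i$ for big $i$, and I would prove the general statement ``every finite-index subgroup of $\hat G$ contains some $\hat G_i$'' by induction on $s$. The abelian case is Lemma \ref{lemma_commplete_and_finite} applied with $n=[\hat G:K]$. For the step, the image of $K$ in $\hat Q$ has finite index, so by the inductive hypothesis $\hat G_i\subseteq K\hat N$ for big $i$, while $K\cap\hat N$ has finite index in $\hat N$, so $\hat N_i\subseteq K$ for big $i$; the resulting map $\hat G_i\to\hat N/(\hat N\cap K)$ then kills $\hat N_i$ and factors through $\hat Q_i$, which is itself the completion of a shorter solvable group with $(\hat Q_i)_j=\hat Q_j$, so a final application of the inductive hypothesis to the finite-index kernel yields $\hat G_j\subseteq K$ for big $j$. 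I expect this nested induction --- the ``continuity'' of the abstract finite module action --- to be the \emph{main obstacle}, since it is where finite Pr\"ufer rank genuinely enters, through the divisibility $\hat A_i\subseteq n\hat A$ of Lemma \ref{lemma_commplete_and_finite}, and it must be threaded carefully through the extension.

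With triviality of the action in hand, the cohomological assertions follow from the machinery of Section \ref{sectionCohomologyofinverselimits}. I would apply Proposition \ref{proposition_cohom_discr_mod} to the short exact sequence of systems above with the constant (hence quasiconstant) module $M$: hypothesis (1), cohomological discreteness of $M$ over $\hat N$, is Proposition \ref{proposition_finite_prufer_abelian}; hypothesis (2), cohomological discreteness over $\hat Q$ of the finite module $H^m(\hat N,M)$, is the inductive hypothesis applied to $\hat Q$; and hypothesis (3), quasiconstancy of $\{H^m(N/(N\cap G_i),M)\}$, is again Proposition \ref{proposition_finite_prufer_abelian}. This shows $M$ is cohomologically discrete as a $\hat G$-module, and then Proposition \ref{proposition_uniform_cohomology}, with the constant module $M$, identifies $H^*_u(\hat G,M)=\varinjlim H^*(G/G_i,M)$, which cohomological discreteness in turn identifies with $H^*(\hat G,M)$.

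Finally, boundedness of $\dim H_n(G/G_i,M)$ I would read off the Lyndon--Hochschild--Serre spectral sequences of the finite-level extensions $N/(N\cap G_i)\mono G/G_i\epi G/NG_i$: the fibre terms $H_l(N/(N\cap G_i),M)$ have bounded dimension by the inductive hypothesis, and feeding them as coefficients into Corollary \ref{corollary_finite_prufer_homology_of_quotients} over the abelian quotient $Q=G/N$ bounds $H_k(G/NG_i,H_l(N/(N\cap G_i),M))$ uniformly in $i$, so by Lemma \ref{lemma_direct_system_of_finit_length} the associated systems are quasiconstant and the limits finite. Finiteness of $H_n(\hat G,M)$ comes from the spectral sequence of $\hat N\mono\hat G\epi\hat Q$, whose $E^2$-term $H_k(\hat Q,H_l(\hat N,M))$ is finite because $H_l(\hat N,M)$ is a finite $\hat Q$-module (Proposition \ref{proposition_finite_prufer_abelian}) and $H_k$ of $\hat Q$ with finite coefficients is finite (inductive hypothesis). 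The homology isomorphism $H_*(\hat G,M)\cong\varprojlim H_*(G/G_i,M)$ I would then deduce from the cohomology isomorphism by the duality $H^*(A,M)=H_*(A,M^\vee)^\vee$ for finite modules recorded at the start of Section \ref{sectionMod-phomology}: applying the cohomology statement to $M^\vee$ and dualising turns the direct limit of cohomology into the inverse limit of homology, the finiteness just established making the double dual harmless. This yields all the assertions, and in particular the cohomological discreteness of $M$.
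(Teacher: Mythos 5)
Your overall route is the paper's own: induction on the length $s$ of the normal series, with $N=U_{s-1}$ abelian and $Q=G/N$ of shorter length; the abelian base handled by Proposition \ref{proposition_finite_prufer_abelian}; the extension step handled by Proposition \ref{proposition_cohom_discr_mod} (whose proof the paper essentially inlines rather than cites); uniform cohomology identified via Proposition \ref{proposition_uniform_cohomology}; and homology recovered by the duality $H^*(A,M)=H_*(A,M^\vee)^\vee$, which the paper sanctions at the start of Section \ref{sectionMod-phomology} (the paper instead says ``the proof for homology is similar''). Your treatment of continuity of the action --- every finite-index subgroup of $\hat G$ contains some $\hat G_i$, proved by a nested induction through the extension using Lemma \ref{lemma_commplete_and_finite} and the surjectivity $\hat G_i\epi \hat Q_i$ from the $3\times 3$ argument --- is correct, and is in fact a more carefully argued version of the paper's terse claim that the action of $\hat G_i$ factors through $\hat G'_i$ so that the inductive hypothesis applies.

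The one genuine defect is in your boundedness paragraph, where you have the roles of the two factors swapped. The fibre $N/(N\cap G_i)$ is the abelian one, so its bounds come from Proposition \ref{proposition_finite_prufer_abelian} (the base case), not ``the inductive hypothesis''; and the base $G/NG_i$ is a quotient of $Q$, which is \emph{not} abelian unless $s\leq 3$, so Corollary \ref{corollary_finite_prufer_homology_of_quotients} --- proved only for abelian groups --- cannot be applied over $Q$ once $s\geq 4$. Nor can you repair this by simply citing the inductive hypothesis for $Q$ instead: the theorem's boundedness conclusion is for a \emph{fixed} finite module, whereas your coefficient modules $H_l(N/(N\cap G_i),M)$ vary with $i$. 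What the spectral-sequence argument really needs is a bound uniform in the module and linear in its dimension, $\dim H_k(Q/Q_j,W)\leq c(k)\cdot \dim W$, i.e.\ a solvable analogue of Lemma \ref{lemma_finite_prufer_homology} and Corollary \ref{corollary_finite_prufer_homology_of_quotients}; this is easily proved by carrying the quantitative statement through the induction on $s$, but it must be added to the inductive hypothesis. With that strengthening your argument closes. It is worth noting that the paper's own proof is silent on exactly this point --- it only establishes the abelian-level bounds needed for quasiconstancy --- so this is a place where the written argument genuinely requires the extra step you attempted, just executed with the correct (non-abelian) quotient.
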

\begin{proof} 
Let $G= U_1\supseteq U_2 \supseteq \dots \supseteq U_s=1$ be a finite sequence of normal subgroups such that $U_i/U_{i+1}$ are abelian groups of finite Pr\"ufer rank and $M$ be a finite $\ZZ/p[\hat G]$-module. The proof is by induction on $s.$ If $s=1,$ it follows from Proposition  \ref{proposition_finite_prufer_abelian}. Prove the induction step. Set $A=U_{s-1},$  $\ A_i=A\cap G_i,$ $\ \hat A=\varprojlim A/A_i,$ $\ G'=G/A,$ $\ G'_i=G_iA/A,$  $\ \hat G'=\varprojlim G'/G'_i.$ Then there are short exact sequences $\hat A\mono \hat G \epi \hat G'.$ Note that $G'/G'_i=G/G_iA.$ It follows that there is a short exact sequence $A/A_i \mono G/G_i \epi G'/G'_i.$ We set $\hat A_i={\rm Ker}(\hat A\to A/A_i),$ $\hat G_i={\rm Ker}(\hat G\to G/G_i)$ and $\hat G'_i={\rm Ker}(\hat G'\to G'/G'_i).$ Then we obtain a commutative diagram, whose columns are short exact sequences and the second and third rows are short exact sequences.  
$$
\begin{tikzcd} 
\hat A_i \arrow[r] \arrow[d]& \hat G_i \arrow[r] \arrow[d] & \hat G'_i  \arrow[d]  \\
\hat A \arrow[r] \arrow[d]& \hat G \arrow[r] \arrow[d] & \hat G' \arrow[d] \\
  A/A_i \arrow[r] & G/G_i \arrow[r] &  G'/G'_i.
\end{tikzcd}
$$
Using $3\times 3$-lemma, we obtain that $\hat A_i\mono \hat G_i \epi \hat G'_i$ is a short exact sequence. By Proposition \ref{proposition_finite_prufer_abelian} $\hat A_i$ acts trivially on $M$ for big enough $i.$ It follows that the action of $\hat G_i$ on $M$ factors through $\hat G'_i$ for big enough $i.$ Then using  the induction hypothesis, we obtain $\hat G'_j$ acts trivially on $M$ for big enough $j.$ Therefore, $\hat G_j$ acts trivially on $M$ for big enough $j.$ 
By Proposition \ref{proposition_finite_prufer_abelian} we obtain that there is an isomorphism $H^*(\hat A,M)\cong \varinjlim H^*(A/A_i,M^{\hat A_i})$, all these groups are finite $ \mathbb Z/p[\hat G']$-modules, where $\hat G'=\varprojlim G/G_iA,$ and the sequences $H^*(A/A_i,M^{\hat A_i})$ have bounded dimensions, and hence, they are quasiconstant.    Then by induction hypothesis we have $H^*(\hat G',H^*(\hat A,M))\cong H_u^*(\hat G',H^*(\hat A,M)).$ By Proposition \ref{proposition_uniform_cohomology} we have $H_u^*(\hat G',H^*(\hat A,M))\cong \varinjlim H^*(G'/G_i', H^*(A/A_i,M)).$ It follows that $H^*(\hat G,M)\cong \varinjlim H^*(G/G_i,M^{\hat G_i}).$ Proposition \ref{proposition_uniform_cohomology} implies that $ \varinjlim H^*(G/G_i,M^{\hat G_i})\cong H^*_u(\hat G,M).$ The proof for homology is similar. 
\end{proof}

\section{ $R$-completions }\label{sectionRcompletions}

\subsection{Definitions}

Let $R\in \{\ZZ,\ZZ/p,\QQ\}.$ Following Bousfield \cite{Bousfield77} we define  lower $R$-central series of a group $G$ by recursion 
$$\gamma_{i+1}^R G = {\rm Ker}(\gamma_i^R G\to \gamma_i^R G/[\gamma_i^R G,G] \otimes R),$$
where $\gamma_1^RG=G.$ Then $R$-completion of $G$ is defined as follows
$$\hat G_R=\begin{cases}\ \varprojlim G/\gamma_i^RG, &\text{ if } R\in \{\ZZ,\ZZ/p\}\\
\ \varprojlim (G/\gamma_i^\QQ G \otimes \QQ), & \text{ if } R=\QQ,
  \end{cases}$$ 
 where $-\otimes \QQ$ is Malcev completion functor. 
 
\begin{Remark} It is easy to prove by induction that $$\gamma_i^\QQ G=\sqrt{\gamma_i^\ZZ G}:=\{g\in G\mid g^n\in \gamma_i^\ZZ G \text{ for some } n \}.$$ It follows that $G/\gamma_i^\QQ G \otimes \QQ = G/\gamma_i^\ZZ G \otimes \QQ.$ Then  $\hat G_\QQ=\varprojlim (G/\gamma_i^\ZZ G \otimes \QQ).$ 
\end{Remark}

A group $G$ is said to be $HR$-good if the map $H_2(G,R)\to H_2(\hat G_R,R)$ is an epimorphism. Moreover, it is interesting when the map $H_2(G,\ZZ/p) \to H_2(\hat G_\ZZ,\ZZ/p)$ is an epimorphism. 

$G$ is called $R$-prenilpotent if $\gamma_i^RG=\gamma_{i+1}^RG$ for big enough $i.$ In this case $\hat G_R=G/\gamma_i^RG$ for $R\in\{\ZZ,\ZZ/p\}$ and $\hat G_\QQ=G/\gamma_i^\QQ \otimes \QQ$ for big enough $i.$ It follows that $\hat G_R$ is discrete, and in particular, cohomomologically discrete over any ring.

\subsection{$2$-cohomologicaly discrete  $R$-completions}

We are interested in the map 
\begin{equation}\label{eq_main_map_homol}
H_n(G,K)\longrightarrow H_n(\hat G_R,K).
\end{equation}
We denote by $\Phi_\alpha^RH_n(G,K)$  $\alpha$th term of Dwyer filtration, which is defined as  $$\Phi_\alpha^RH_n(G,K)={\rm Ker}(H_n(G,K)\to H_n(G/\gamma^R_\alpha G,K)).$$  Using that $H_n(G/\gamma^\QQ_iG,\QQ)=H_n(G/\gamma^\QQ_iG \otimes \QQ,\QQ)$ (Proposition 2.5 of \cite{Hilton}) we obtain that the map  \eqref{eq_main_map_homol} factors through $H_n(G/\gamma^R_iG,K)$ for any finite $i.$ On the other hand, the map $G\to \hat G_R$ factors through $G/\gamma_\omega G.$ Hence
\begin{equation}
 \Phi_\omega^RH_n(G,K)\subseteq  { \rm Ker}(H_n(G,K)\to H_n(\hat G_R,K))\subseteq \ \bigcap_{i=1}^\infty\: \Phi_i^RH_n(G,K).
\end{equation}

\begin{Lemma}\label{lemma_short_exact_seq_lim_hom} Let $G$ be a group and $S$ be a quotient of $R.$ 
Set $\Phi_i:=\Phi_i^RH_2(G,S).$   
\begin{enumerate}
\item Then $\varprojlim H_2(G,S)/\Phi_i=\varprojlim H_2(G/\gamma_i^RG,S)$ 
and there is an exact sequence 
$$0 \to \bigcap_{i=1}^\infty \Phi_i \to H_2(G,S) \to \varprojlim H_2(G/\gamma_i^RG,S) \to  {\varprojlim}^1 \Phi_i \to 0 .$$ 
\item Moreover, if $S=K$ and $H_2(G,K)$ is finite dimensional, then  there are short exact sequences
$$0 \longrightarrow \Phi_j \longrightarrow H_2(G,K) \longrightarrow \varprojlim H_2(G/\gamma_i^RG,K) \longrightarrow 0$$
for  big enough $j,$ and the inverse sequence $H_2(G/\gamma_i^RG,K)$ is quasiconstant. 
\end{enumerate}
\end{Lemma}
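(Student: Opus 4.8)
The plan is to organise everything around the tower of quotients $G/\gamma_i^RG$ and the subgroups $I_i:={\rm Im}(H_2(G,S)\to H_2(G/\gamma_i^RG,S))$. By definition of the Dwyer filtration $I_i\cong H_2(G,S)/\Phi_i$, the projections $G/\gamma_{i+1}^RG\epi G/\gamma_i^RG$ make $\{I_i\}$ a sub-inverse-system of $\{H_2(G/\gamma_i^RG,S)\}$, and $\{\Phi_i\}$ is a decreasing chain with $\varprojlim\Phi_i=\bigcap_i\Phi_i$. Feeding the short exact sequence of inverse systems $\{\Phi_i\}\mono\{H_2(G,S)\}\epi\{I_i\}$ (the middle system constant, hence with vanishing ${\varprojlim}^1$) into the six-term $\varprojlim/{\varprojlim}^1$ exact sequence immediately gives
$$0\to \bigcap_i\Phi_i\to H_2(G,S)\to \varprojlim I_i\to {\varprojlim}^1\Phi_i\to 0.$$
So part (1) reduces entirely to the identity $\varprojlim I_i=\varprojlim H_2(G/\gamma_i^RG,S)$, i.e. to computing the transition maps of the tower on $H_2$.

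The key step is a Dwyer-type identification $I_i={\rm Im}(H_2(G/\gamma_{i+1}^RG,S)\to H_2(G/\gamma_i^RG,S))$, which I would obtain by describing both sides as kernels of connecting maps in five-term homology sequences. Applied to the extension $\gamma_i^RG\mono G\epi G/\gamma_i^RG$, the five-term sequence together with the fundamental fact that the lower $R$-central series preserves $H_1(-,R)$ — so that $H_1(G/\gamma_{i+1}^RG,S)\to H_1(G/\gamma_i^RG,S)$ is an isomorphism for every quotient $S$ of $R$ — shows the connecting map is surjective and identifies $I_i=\ker\big(H_2(G/\gamma_i^RG,S)\to(\gamma_i^RG/[\gamma_i^RG,G])\otimes S\big)$. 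The same argument for the central extension $\gamma_i^RG/\gamma_{i+1}^RG\mono G/\gamma_{i+1}^RG\epi G/\gamma_i^RG$ identifies the image on the right with $\ker\big(H_2(G/\gamma_i^RG,S)\to(\gamma_i^RG/\gamma_{i+1}^RG)\otimes S\big)$. Naturality of the five-term sequence along the evident map of extensions shows these two connecting maps differ only by $\pi\otimes S$, where $\pi:\gamma_i^RG/[\gamma_i^RG,G]\epi\gamma_i^RG/\gamma_{i+1}^RG$ is the natural projection; a case check shows $\pi\otimes S$ is injective ($\pi$ is the identity for $R=\ZZ$, reduction mod $p$ for $R=\ZZ/p$, and division by torsion for $R=\QQ$, each becoming injective after $\otimes S$), so the two kernels coincide and $I_i={\rm Im}(H_2(G/\gamma_{i+1}^RG,S)\to H_2(G/\gamma_i^RG,S))$. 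It follows that ${\rm Im}(H_2(G/\gamma_k^RG,S)\to H_2(G/\gamma_i^RG,S))=I_i$ for all $k>i$, so the stable images of the tower are exactly the $I_i$; since an inverse limit is unchanged by passage to stable images, $\varprojlim H_2(G/\gamma_i^RG,S)=\varprojlim I_i$, finishing (1).

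For part (2), finite dimensionality of $H_2(G,K)$ forces the decreasing chain of subspaces $\Phi_1\supseteq\Phi_2\supseteq\cdots$ to stabilise, so $\Phi_j=\bigcap_i\Phi_i$ for big enough $j$. Then $\{\Phi_i\}$ is Mittag--Leffler, ${\varprojlim}^1\Phi_i=0$, and the sequence of (1) collapses to $0\to\Phi_j\to H_2(G,K)\to\varprojlim H_2(G/\gamma_i^RG,K)\to 0$ for big $j$. To see that $\{H_2(G/\gamma_i^RG,K)\}$ is quasiconstant I would use the short exact sequence of inverse systems $\{I_i\}\mono\{H_2(G/\gamma_i^RG,K)\}\epi\{H_2(G/\gamma_i^RG,K)/I_i\}$: the sub $\{I_i\}$ is eventually constant (its transition maps become identities once $\Phi_i$ has stabilised), while the quotient is zero-equivalent, since the key step forces each transition map $H_2(G/\gamma_{i+1}^RG,K)/I_{i+1}\to H_2(G/\gamma_i^RG,K)/I_i$ to vanish. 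This realises the tower as an extension of a zero-equivalent system by an eventually constant one, which is exactly the (dual) definition of quasiconstant.

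I expect the main obstacle to be the key step, the identification $I_i={\rm Im}(H_2(G/\gamma_{i+1}^RG,S)\to H_2(G/\gamma_i^RG,S))$. The $\varprojlim/{\varprojlim}^1$ bookkeeping and the finite-dimensional stabilisation are routine; the real work is in matching the two connecting maps via naturality of the five-term sequence and, above all, in verifying that $\pi\otimes S$ is injective uniformly across the three admissible pairs $(R,S)$ — the delicate cases being $R=\ZZ/p$, where $\pi$ is only surjective, and $R=\QQ$, where $\pi$ kills torsion, yet both become injective after tensoring with $S$.
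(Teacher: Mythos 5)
Your proof is correct, and its skeleton coincides with the paper's: both rest on the five-term homology sequence for $\gamma_i^RG\mono G\epi G/\gamma_i^RG$ (valid for $i\geq 2$, where $H_1(G,S)\to H_1(G/\gamma_i^RG,S)$ is an isomorphism), which yields the short exact sequence
$$0 \longrightarrow H_2(G,S)/\Phi_i \longrightarrow H_2(G/\gamma_i^RG,S) \stackrel{\delta_i}{\longrightarrow} (\gamma_i^RG/[\gamma_i^RG,G])\otimes S \longrightarrow 0,$$
and both finish with the same $\varprojlim/{\varprojlim}^1$ bookkeeping and the same finite-dimensional stabilization in part (2). The difference lies in how the inverse limit of the middle terms is identified. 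The paper simply observes that the tower of third terms is zero-equivalent: since $\gamma_{i+1}^RG={\rm Ker}\bigl(\gamma_i^RG\to(\gamma_i^RG/[\gamma_i^RG,G])\otimes R\bigr)$ by definition, and $-\otimes S$ factors through $-\otimes R$ because $S$ is a quotient of $R$, the transition maps $(\gamma_{i+1}^RG/[\gamma_{i+1}^RG,G])\otimes S\to(\gamma_i^RG/[\gamma_i^RG,G])\otimes S$ vanish; taking $\varprojlim$ in the displayed sequence then gives $\varprojlim H_2(G,S)/\Phi_i\cong\varprojlim H_2(G/\gamma_i^RG,S)$ at once, and in part (2) it exhibits quasiconstancy directly (eventually constant sub, zero-equivalent quotient). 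You instead prove the Dwyer-type identification $I_i={\rm Im}\bigl(H_2(G/\gamma_{i+1}^RG,S)\to H_2(G/\gamma_i^RG,S)\bigr)$ by invoking a second five-term sequence for the central extension $\gamma_i^RG/\gamma_{i+1}^RG\mono G/\gamma_{i+1}^RG\epi G/\gamma_i^RG$ and checking case by case that $\pi\otimes S$ is injective; this is all correct (your descriptions of $\pi$ in the three cases are right, and stable images do compute the inverse limit), but it is more work than necessary. Indeed, the vanishing of the transition maps above, together with naturality of the five-term sequence along the map of extensions $(\gamma_{i+1}^RG\mono G\epi G/\gamma_{i+1}^RG)\to(\gamma_i^RG\mono G\epi G/\gamma_i^RG)$ given by the identity on $G$, already forces ${\rm Im}\bigl(H_2(G/\gamma_{i+1}^RG,S)\to H_2(G/\gamma_i^RG,S)\bigr)\subseteq{\rm Ker}(\delta_i)=I_i$, while the reverse inclusion is trivial because $H_2(G,S)\to H_2(G/\gamma_i^RG,S)$ factors through $H_2(G/\gamma_{i+1}^RG,S)$; so your key step follows without the central extension and without any case analysis over $(R,S)$. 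What your route buys is an explicit stable-image description of the tower, which the paper never states; what the paper's route buys is brevity and uniformity in $(R,S)$.
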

\begin{proof} The 5-term exact sequence gives the exact sequence
\begin{equation}\label{eq_exact_nil_gamma}
0 \longrightarrow H_2(G,S)/\Phi_i \longrightarrow H_2(G/\gamma_i^RG,S) \longrightarrow \gamma_i^RG/\gamma_i^RG\otimes S \longrightarrow 0.
\end{equation}
Since $\gamma_i^RG/\gamma_i^RG\otimes S $ is zero-equivalent, $\varprojlim H_2(G,S)/\Phi_i=\varprojlim H_2(G/\gamma_i^RG,S).$ Then the (1) follows from the short exact sequence $\Phi_i\mono H_2(G,S)\epi H_2(G,S)/\Phi_i.$ 

If $S=K$ and $H_2(G,K)$ is finite dimensional, then $V_i$ and $\Phi_i$ stabilises. Hence $\varprojlim{}^1\Phi_i=0$ and \eqref{eq_exact_nil_gamma} implies that $H_2(G/\gamma_i^RG,K)$ is quasiconstant. 
\end{proof}

\begin{Proposition}\label{prop_eqviv_2-coh-discr} Let $G$ be a group such that $H_1(G,R)$ is finitely generated over $R$ and $H_2(G,K)$ is finite dimensional. Set $\Phi_i:=\Phi^R_iH_2(G,K).$ Then the following are equivalent. 
\begin{enumerate}
\item $\hat G_R$ is $2$-cohomologically discrete over $K$.

\item The obvious maps give a short exact sequence
$$0 \longrightarrow \Phi_i \longrightarrow H_2(G,K)\longrightarrow H_2(\hat G_R,K) \longrightarrow 0$$
for  big enough $i.$
\item $H_2(\hat G_R,K)\cong \varprojlim H_2(G/\gamma_i^RG,K).$
\end{enumerate}
\end{Proposition}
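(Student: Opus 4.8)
The plan is to establish (1)~$\Leftrightarrow$~(3) by dualizing the defining cohomological isomorphism into a homological one, and then to read off (2)~$\Leftrightarrow$~(3) from Lemma \ref{lemma_short_exact_seq_lim_hom}. Throughout set $G_i=G/\gamma_i^RG$ and view $\GG=\{G_i\}$ as an inverse system of groups and epimorphisms, so that $\hat\GG=\varprojlim G_i=\hat G_R$ and the constant coefficient $K$ is a (trivially quasiconstant) direct $\GG$-module. The first thing I would record is that for each $m\leq 2$ the inverse system $\{H_m(G_i,K)\}_i$ is quasiconstant with finite-dimensional limit: for $m=0$ it is the constant system $K$; for $m=1$ it is constant for $i\geq 2$, since the projections induce isomorphisms on $H_1(-,R)$ and hence on $H_1(-,K)$, and finite-dimensional because $H_1(G,R)$ is finitely generated over $R$; and for $m=2$ this is precisely Lemma \ref{lemma_short_exact_seq_lim_hom}(2), the limit being $H_2(G,K)/\Phi_j$ for big $j$, which is finite-dimensional since $H_2(G,K)$ is.

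The elementary observation driving the dualization is that for a quasiconstant inverse system $\mathcal V$ of $K$-vector spaces with finite-dimensional limit one has $\varinjlim_i (V_i)^\vee=(\varprojlim_i V_i)^\vee$; indeed, writing a short exact sequence $\mathcal C\mono \mathcal V\epi \mathcal B$ with $\mathcal B$ zero-equivalent and $\mathcal C$ eventually constant, the dual system $\{B_i^\vee\}$ is zero-equivalent and $\{C_i^\vee\}$ is eventually constant, so no finiteness of the individual $V_i$ is needed. Now, since $K$ is quasiconstant, Proposition \ref{proposition_uniform_cohomology} gives $H^m_u(\hat G_R,K)=\varinjlim H^m(G_i,K)$, and by Corollary \ref{cor_quasiconstant_equiv} condition (1) says exactly that the comparison map $\varinjlim H^m(G_i,K)\to H^m(\hat G_R,K)$ is an isomorphism for $m\leq 2$. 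As $K$ is a field, universal coefficients identify $H^m(-,K)$ with $H_m(-,K)^\vee$ naturally, and together with the observation above the comparison map becomes $\alpha_m^\vee$, where $\alpha_m\colon H_m(\hat G_R,K)\to \varprojlim H_m(G_i,K)$ is induced by the projections $\hat G_R\to G_i$. I would then verify that $\alpha_m^\vee$ is an isomorphism if and only if $\alpha_m$ is: the nontrivial direction uses that the source $(\varprojlim H_m(G_i,K))^\vee$ is finite-dimensional, so an invertible $\alpha_m^\vee$ forces $H_m(\hat G_R,K)$ to be finite-dimensional, whereupon $\alpha_m$ is a map of finite-dimensional spaces with invertible dual. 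Thus (1) is equivalent to $\alpha_m$ being an isomorphism for all $m\leq 2$; since $\alpha_0=\mathrm{id}_K$ and $\alpha_1$ is an isomorphism (once more by finite generation of $H_1(G,R)$, giving $H_1(\hat G_R,K)\cong\varprojlim H_1(G_i,K)$), this collapses to the single assertion that $\alpha_2$ is an isomorphism, i.e. to (3).

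For (2)~$\Leftrightarrow$~(3) I would use the factorization $G\to\hat G_R\to G_i$, which shows that the natural surjection $H_2(G,K)\epi\varprojlim H_2(G_i,K)$ of Lemma \ref{lemma_short_exact_seq_lim_hom}(2), with kernel $\Phi_j$ for big $j$, equals the composite $H_2(G,K)\xrightarrow{\beta}H_2(\hat G_R,K)\xrightarrow{\alpha_2}\varprojlim H_2(G_i,K)$. If (3) holds then $\alpha_2$ is an isomorphism, so $\beta$ inherits surjectivity and kernel $\Phi_j$, which is (2). Conversely, if (2) holds then $\beta$ is surjective with kernel $\Phi_i$; since the $\Phi_i$ stabilize to $\bigcap_i\Phi_i$, comparing the (equal) kernels of $\beta$ and of $\alpha_2\circ\beta$ forces $\ker\alpha_2=0$, and $\alpha_2$ is surjective because the composite is, so $\alpha_2$ is an isomorphism, which is (3).

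I expect the main obstacle to be the dualization step inside (1)~$\Leftrightarrow$~(3): one must ensure that invertibility of the dual map $\alpha_m^\vee$ actually returns invertibility of $\alpha_m$ itself, which is false for arbitrary vector spaces and is salvaged here only because $\varprojlim H_m(G_i,K)$ is finite-dimensional and hence forces $H_m(\hat G_R,K)$ to be finite-dimensional as well. Getting the quasiconstancy and finiteness inputs of the first step right, and checking that the defining comparison map is genuinely $\alpha_m^\vee$, are the remaining points requiring care; the degree-one case $\alpha_1$ relies essentially on the hypothesis that $H_1(G,R)$ is finitely generated over $R$.
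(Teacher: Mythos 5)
Your proposal is correct and takes essentially the same route as the paper's proof: (2)$\Leftrightarrow$(3) is read off from Lemma \ref{lemma_short_exact_seq_lim_hom} via the factorization $G\to\hat G_R\to G/\gamma_i^RG$, and (1)$\Leftrightarrow$(3) comes from quasiconstancy of $\{H_2(G/\gamma_i^RG,K)\}$, Proposition \ref{proposition_uniform_cohomology}, and $K$-linear duality, with finite-dimensionality of $\varprojlim H_2(G/\gamma_i^RG,K)$ making the dualization reversible. You merely make explicit two points the paper leaves implicit, namely the duality statement of Lemma \ref{lemma_contravarian_functor} and the check that invertibility of the dual map $\alpha_2^\vee$ forces invertibility of $\alpha_2$ itself.
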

\begin{proof} Since $H_1(G,R)$ is finitely generated over $R,$ we have $H_1(G,R)\cong H_1(\hat G_R,R)$ (see \cite{Bousfield77}). Thus $H^1(G,K)=H^1(G/\gamma_i^RG,K)=H^1(\hat G_R,K)$ for any $i>1.$ Hence we need to think only about the second (co)homology. 

 Lemma \ref{lemma_short_exact_seq_lim_hom} implies that $(2)$ and $(3)$ are equivalent.  Moreover, it implies that $H_2(G/\gamma_i^RG,K)$ is quasiconstant and its inverse limit  is finite dimensional. By Lemma \ref{lemma_contravarian_functor} we get that $ H^2(G/\gamma_i^RG,K)$ in a quasiconstant direct sequence with a finite dimensional inverse limit.  It follows that $(\varprojlim H_2(G/\gamma^R_iG,K))^*=\varinjlim H^2(G/\gamma^R_iG,K)\cong H_u^2(\hat G_R,K)$ and $(\varinjlim H^2(G/\gamma^R_iG,K))^*=\varprojlim H_2(G/\gamma^R_iG,K),$ where $(-)^*={\rm Hom}_K(-,K).$   Using this, we get that $(3)$ is equivalent to $(1).$
\end{proof}

\begin{Remark} Using Proposition \ref{prop_eqviv_2-coh-discr}, the main result of \cite{Ivanov_Mikhailov} can be reformulated as follows.  If $G$ is a finitely presented metabelian group, then $\hat G_R$ is $2$-cohomologically discrete over $K$.
\end{Remark}

\subsection{$\QQ$-prenilpotent groups}

Note that by definition $G/\gamma_i^\QQ G$ is torsion free.

\begin{Lemma}\label{lemma_prenilpotent_stable_sequence} A group $G$ is $\QQ$-prenilpotent if and only if the sequence $G/\gamma^\QQ_iG \otimes \QQ$ stabilises. 
\end{Lemma}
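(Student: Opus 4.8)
The plan is to read ``stabilises'' as the statement that the transition maps $G/\gamma^\QQ_{i+1}G\otimes \QQ \to G/\gamma^\QQ_iG\otimes \QQ$ of the inverse sequence are isomorphisms for big enough $i$, and to reduce both implications to the structure of the kernels of these maps. The forward implication is immediate: if $G$ is $\QQ$-prenilpotent, then $\gamma^\QQ_iG=\gamma^\QQ_{i+1}G$ for big enough $i$, so the groups $G/\gamma^\QQ_iG$ themselves coincide from some point on, and hence so do their Malcev completions $G/\gamma^\QQ_iG\otimes \QQ$; the sequence is eventually constant.

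For the converse I would analyse the surjection $G/\gamma^\QQ_{i+1}G \epi G/\gamma^\QQ_iG$, whose kernel is $\gamma^\QQ_iG/\gamma^\QQ_{i+1}G$. First I would record two structural facts. Since the defining formula forces $[\gamma^\QQ_iG,G]\subseteq \gamma^\QQ_{i+1}G$, this kernel is central in $G/\gamma^\QQ_{i+1}G$, so we have a central extension of nilpotent groups; in particular these quotients are nilpotent and Malcev completion applies. Moreover, because $\gamma^\QQ_{i+1}G={\rm Ker}(\gamma^\QQ_iG\to \gamma^\QQ_iG/[\gamma^\QQ_iG,G]\otimes \QQ)$, the quotient $\gamma^\QQ_iG/\gamma^\QQ_{i+1}G$ injects into the $\QQ$-vector space $\gamma^\QQ_iG/[\gamma^\QQ_iG,G]\otimes \QQ$ and is therefore a torsion free abelian group.

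Next I would apply Malcev completion, i.e.\ $P$-localization with $P$ the set of all primes, to this central extension. The key input is that $P$-localization is an exact functor on the category of nilpotent groups \cite{Hilton}, so it carries the central extension to a short exact sequence
$$1\longrightarrow (\gamma^\QQ_iG/\gamma^\QQ_{i+1}G)\otimes\QQ \longrightarrow (G/\gamma^\QQ_{i+1}G)\otimes\QQ \longrightarrow (G/\gamma^\QQ_iG)\otimes\QQ \longrightarrow 1,$$
where the leftmost term is the ordinary tensor product with $\QQ$ because $\gamma^\QQ_iG/\gamma^\QQ_{i+1}G$ is abelian. Hence the transition map is an isomorphism if and only if $(\gamma^\QQ_iG/\gamma^\QQ_{i+1}G)\otimes\QQ=0$. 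Using the torsion-freeness established above, this is equivalent to $\gamma^\QQ_iG/\gamma^\QQ_{i+1}G=0$, that is, $\gamma^\QQ_iG=\gamma^\QQ_{i+1}G$. Therefore, if the sequence stabilises for $i\geq N$, then $\gamma^\QQ_iG=\gamma^\QQ_{i+1}G$ for all $i\geq N$, which is precisely $\QQ$-prenilpotence.

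The only genuinely nontrivial ingredient is the exactness of Malcev completion on central extensions of nilpotent groups; once that is granted, the remainder is bookkeeping. The point that most needs care is the identification of the rationalized kernel together with the torsion-freeness of the graded quotients $\gamma^\QQ_iG/\gamma^\QQ_{i+1}G$, since it is exactly this torsion-freeness that makes the vanishing of the tensor product $(\gamma^\QQ_iG/\gamma^\QQ_{i+1}G)\otimes\QQ$ equivalent to the vanishing of the group itself, and hence lets us upgrade ``stabilisation after $\otimes\QQ$'' to genuine stabilisation of the series.
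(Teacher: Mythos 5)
Your proof is correct and follows essentially the same route as the paper: both directions hinge on the exactness of Malcev completion applied to the central extensions $\gamma^\QQ_iG/\gamma^\QQ_{i+1}G \mono G/\gamma^\QQ_{i+1}G \epi G/\gamma^\QQ_iG$, giving $(\gamma^\QQ_iG/\gamma^\QQ_{i+1}G)\otimes\QQ=0$ for big enough $i$, and then on the torsion-freeness of $\gamma^\QQ_iG/\gamma^\QQ_{i+1}G$ to conclude that these quotients vanish. The paper's proof is just a terser version of yours; your added verifications (centrality of the kernel, and the injection of $\gamma^\QQ_iG/\gamma^\QQ_{i+1}G$ into the $\QQ$-vector space $\gamma^\QQ_iG/[\gamma^\QQ_iG,G]\otimes\QQ$ as the source of torsion-freeness) are exactly the facts the paper leaves implicit.
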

\begin{proof}
If $G$ is $\QQ$-prenilpotent, then obviously the sequence $G/\gamma^\QQ_iG \otimes \QQ$ stabilises.
Assume now that  $G/\gamma^\QQ_iG \otimes \QQ$ stabilises and prove that $G$ is $\QQ$-prenilpotent. Since $-\otimes \QQ$ is an exact functor, this means that $\gamma_i^\QQ G/\gamma_{i+1}^\QQ G \otimes \QQ=0$ for  big enough $i.$  Using that $\gamma_i^\QQ G/\gamma_{i+1}^\QQ G$ is torsion free, we obtain that $\gamma_i^\QQ G/\gamma_{i+1}^\QQ G=0$ for  big enough $i.$ Thus $G$ is $\QQ$-prenilpotent. 
\end{proof}

\begin{Lemma} \label{lemma_extension_of_Q-prenilpotent}
Let $A\mono G\epi G''$ be a short exact sequence of groups such that 
\begin{enumerate}
\item $G''$ is $\QQ$-prenilpotent.
\item $A$ is an abelian group such that $A\otimes \QQ$ is finite dimensional. 
\end{enumerate}  
Then $G$ is $\QQ$-prenilpotent. 
\end{Lemma}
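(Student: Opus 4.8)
The plan is to use the criterion of Lemma~\ref{lemma_prenilpotent_stable_sequence}: the group $G$ is $\QQ$-prenilpotent if and only if the rational graded pieces $L_i:=\gamma_i^\QQ G/\gamma_{i+1}^\QQ G\otimes\QQ$ vanish for big enough $i$ (these pieces are torsion free, so $L_i=0$ forces $\gamma_i^\QQ G=\gamma_{i+1}^\QQ G$). I would compare the lower $\QQ$-central series of $G$ with its image filtration in $G''$. Write $\pi:G\epi G''$ for the projection and set $\Gamma_i:=\pi(\gamma_i^\QQ G)$, a central filtration of $G''$ contained in $\gamma_i^\QQ G''$. The Dedekind modular law gives, for each $i$, a short exact sequence of abelian groups
$$0\longrightarrow (A\cap\gamma_i^\QQ G)/(A\cap\gamma_{i+1}^\QQ G)\longrightarrow \gamma_i^\QQ G/\gamma_{i+1}^\QQ G\xrightarrow{\ \pi\ } \Gamma_i/\Gamma_{i+1}\longrightarrow 0,$$
and, tensoring with $\QQ$ (which is exact), I obtain $\dim_\QQ L_i=\dim_\QQ \mathfrak a_i+\dim_\QQ(\Gamma_i/\Gamma_{i+1}\otimes\QQ)$, where $\mathfrak a_i:=(A\cap\gamma_i^\QQ G)/(A\cap\gamma_{i+1}^\QQ G)\otimes\QQ$. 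Thus it suffices to show that both summands vanish for big enough $i$.

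The contribution of $A$ is controlled by finite dimensionality. The images $V_i$ of $(A\cap\gamma_i^\QQ G)\otimes\QQ$ in $A\otimes\QQ$ form a descending chain of subspaces of the finite dimensional space $A\otimes\QQ$, and exactness of $-\otimes\QQ$ gives $\dim_\QQ\mathfrak a_i=\dim_\QQ V_i-\dim_\QQ V_{i+1}$. Hence $\sum_i\dim_\QQ\mathfrak a_i\leq\dim_\QQ(A\otimes\QQ)<\infty$, so $\mathfrak a_i=0$ for big enough $i$.

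The main point is to show $\Gamma_i/\Gamma_{i+1}\otimes\QQ=0$ for big enough $i$, i.e.\ that the image filtration stabilises rationally. The key observation is that, although $\pi(\gamma_i^\QQ G)$ need not equal $\gamma_i^\QQ G''$, the defect is torsion: since the ordinary lower central series is preserved by epimorphisms, $\gamma_i^\ZZ G''=\pi(\gamma_i^\ZZ G)\subseteq\Gamma_i$, and because $\gamma_i^\QQ G''=\sqrt{\gamma_i^\ZZ G''}$ every element of $\gamma_i^\QQ G''$ has a power in $\Gamma_i$; thus $\gamma_i^\QQ G''/\Gamma_i$ is torsion for every $i$. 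Now I would invoke that $G''$ is $\QQ$-prenilpotent: there is $N$ with $\gamma_i^\QQ G''=\gamma_{i+1}^\QQ G''$ for $i\geq N$. For such $i$ one has $\Gamma_i\subseteq\gamma_i^\QQ G''=\gamma_{i+1}^\QQ G''$, so every element of $\Gamma_i$ has a power in $\Gamma_{i+1}$; hence the abelian group $\Gamma_i/\Gamma_{i+1}$ is torsion and $\Gamma_i/\Gamma_{i+1}\otimes\QQ=0$. Combining the two paragraphs, $L_i=0$ for all large $i$, which by Lemma~\ref{lemma_prenilpotent_stable_sequence} proves that $G$ is $\QQ$-prenilpotent.

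I expect the torsion computation for $\gamma_i^\QQ G''/\pi(\gamma_i^\QQ G)$ to be the delicate step, since the naive guess $\pi(\gamma_i^\QQ G)=\gamma_i^\QQ G''$ is false; one must instead exploit the interplay between isolators and the honest surjectivity of the integral lower central series. Verifying exactness of the displayed sequence (via the Dedekind law and the fact that the graded quotients of a central filtration are abelian) and the telescoping bound on $\sum_i\dim_\QQ\mathfrak a_i$ are then routine.
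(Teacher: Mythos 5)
Your proof is correct, and it takes a genuinely different route from the paper's. The paper never touches the graded pieces $\gamma_i^\QQ G/\gamma_{i+1}^\QQ G$: it passes to the nilpotent quotients and their Malcev completions, writing a short exact sequence $0\to V_i\to G/\gamma_i^\QQ G\otimes\QQ\to G''/\gamma_i^\QQ G''\otimes\QQ\to 1$ with $V_i=(A/A\cap\gamma_i^\QQ G)\otimes\QQ$, noting that both outer terms stabilise (the left because $V_i$ is a quotient of the finite dimensional space $A\otimes\QQ$, the right by prenilpotency of $G''$), hence the middle stabilises, and then invoking Lemma \ref{lemma_prenilpotent_stable_sequence}. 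You work instead at the level of the successive quotients of the filtration itself: the modular law splits $L_i$ into the $A$-contribution $\mathfrak{a}_i$ and the image contribution $\Gamma_i/\Gamma_{i+1}\otimes\QQ$, the first killed by a dimension count in $A\otimes\QQ$, the second by the torsion comparison $\gamma_i^\ZZ G''=\pi(\gamma_i^\ZZ G)\subseteq\Gamma_i\subseteq\gamma_i^\QQ G''$ combined with prenilpotency of $G''$; vanishing of $L_i$ then forces $\gamma_i^\QQ G=\gamma_{i+1}^\QQ G$ by torsion-freeness of the graded quotients. (Strictly speaking you use this torsion-freeness criterion, which is the content of the \emph{proof} of Lemma \ref{lemma_prenilpotent_stable_sequence} rather than its statement, but you justify it yourself.) What your approach buys: it is more elementary — it needs only exactness of $-\otimes\QQ$ on abelian groups, the Dedekind law, and the isolator description $\gamma_i^\QQ G=\sqrt{\gamma_i^\ZZ G}$ from the paper's remark — and it makes explicit the torsion defect between $\pi(\gamma_i^\QQ G)$ and $\gamma_i^\QQ G''$, which you rightly identify as the delicate point; the paper's displayed short exact sequence quietly absorbs exactly this issue (its right-hand term is $G''/\gamma_i^\QQ G''\otimes\QQ$ rather than $G''/\pi(\gamma_i^\QQ G)\otimes\QQ$ precisely because the defect is torsion and dies under Malcev completion), and the paper also relies on exactness of Malcev completion for nilpotent groups, which your argument avoids entirely. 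What the paper's approach buys is brevity, given that machinery, and the fact that it directly produces stabilisation of the completions $G/\gamma_i^\QQ G\otimes\QQ$, the form used elsewhere in the paper. Both proofs run on the same two fuels: finite dimensionality of $A\otimes\QQ$ and prenilpotency of $G''$.
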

\begin{proof} We assue that $A$ is a normal subgroup in $G$ and $A\mono G$ is the embedding. 
Set $V_i=(A/A\cap \gamma^\QQ_iG) \otimes \QQ.$  Then $V_i$ is a quotient of $A\otimes \QQ.$  Consider the short exact sequence
$$0\longrightarrow  V_i \longrightarrow G/\gamma^\QQ_iG \otimes \QQ  \longrightarrow G''/\gamma^\QQ_i G'' \otimes \QQ \longrightarrow 1.$$ Using that $G''$ is 
$\QQ$-prenilpotent and $A\otimes \QQ$ is finitedimensional, we get that $G''/\gamma^\QQ_i G'' \otimes \QQ$ and $V_i$ stabilise. Then $G/\gamma^\QQ_iG \otimes \QQ$ stabilises. Then by Lemma \ref{lemma_prenilpotent_stable_sequence} we obtain that $G$ is $\QQ$-prenilpotent.  
\end{proof}

\begin{Proposition}\label{proposition_solvable_Q-prenilpotent} Let $G$ be a solvable group with a finite sequence of normal subgroups $G=U_1\supseteq U_2 \supseteq \dots \supseteq U_s=1$ such that $U_i/U_{i+1}$ are abelian and $U_i/U_{i+1}\otimes \QQ$ are finite dimensional. Then $G$ is $\QQ$-prenilpotent. In particular, $\hat G_\QQ$ is cohomologically discrete over $\QQ.$  Moreover, if $G$ is finitely generated, then it is $H\QQ$-good.
\end{Proposition}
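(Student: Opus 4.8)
The plan is to prove $\QQ$-prenilpotence by induction on the length $s$ of the solvable series, using Lemma \ref{lemma_extension_of_Q-prenilpotent} as the inductive engine, and then to read off the remaining assertions from results proven earlier in the paper. The base case $s=1$ is immediate: here $G=U_1$ is abelian with $G\otimes\QQ$ finite dimensional, so the sequence $G/\gamma_i^\QQ G\otimes\QQ$ is eventually stable (an abelian group satisfies $\gamma_2^\QQ G=1$), and Lemma \ref{lemma_prenilpotent_stable_sequence} gives $\QQ$-prenilpotence.

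For the inductive step I would set $A=U_{s-1}$, which is abelian with $A\otimes\QQ$ finite dimensional by hypothesis, and $G''=G/A$. The quotient $G''$ inherits the series $G''=U_1/A\supseteq\dots\supseteq U_{s-1}/A=1$ of length $s-1$, whose successive quotients are still abelian of finite $\QQ$-rank (quotients of the $U_i/U_{i+1}$), so by the induction hypothesis $G''$ is $\QQ$-prenilpotent. Then the short exact sequence $A\mono G\epi G''$ satisfies exactly the two hypotheses of Lemma \ref{lemma_extension_of_Q-prenilpotent}, which yields that $G$ is $\QQ$-prenilpotent. I expect this induction to be entirely routine; the only mild care needed is checking that $U_{s-1}$ really is normal in $G$ and abelian, which is given, and that the quotient series for $G''$ has finite-dimensional rational successive quotients, which follows since each $(U_i/A)/(U_{i+1}/A)\cong U_i/U_{i+1}$.

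For the second sentence, once $G$ is $\QQ$-prenilpotent we have $\gamma_i^\QQ G=\gamma_{i+1}^\QQ G$ for big enough $i$, so as noted in the subsection on definitions $\hat G_\QQ=G/\gamma_i^\QQ G\otimes\QQ$ for big $i$, a \emph{discrete} group. A discrete topological group is cohomologically discrete over any ring in the trivial sense that the inverse limit defining $\hat G_\QQ$ stabilises, so $H^*_u(\hat G_\QQ,\QQ)\cong H^*(\hat G_\QQ,\QQ)$ and the constant module is cohomologically discrete; this is precisely the remark made after the definition of $R$-prenilpotent.

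The final assertion---that a finitely generated such $G$ is $H\QQ$-good---is where the substantive input enters, and I regard it as the main point rather than an obstacle, since the machinery is already in place. The plan is to invoke the criterion recalled in the introduction: a finitely generated group is $HR$-good precisely when $H_2(G,R)\to H_2(\hat G_R,R)$ is an epimorphism (the $H_1$ statement being automatic in the finitely generated case). Because $\hat G_\QQ$ is discrete, it coincides with $G/\gamma_i^\QQ G\otimes\QQ$ for large $i$, so by the factorisation of the homology map through the finite stages $G/\gamma_i^\QQ G$ discussed before Lemma \ref{lemma_short_exact_seq_lim_hom}, the map $H_2(G,\QQ)\to H_2(\hat G_\QQ,\QQ)$ is simply the map into the stabilised term $H_2(G/\gamma_i^\QQ G\otimes\QQ,\QQ)$. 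The surjectivity of this map onto the inverse limit $\varprojlim H_2(G/\gamma_i^\QQ G,\QQ)$ is then exactly condition (3) of Proposition \ref{prop_eqviv_2-coh-discr}, which holds here because the inverse system has already stabilised; invoking the equivalence $(1)\Leftrightarrow(3)$ of that Proposition (with $R=K=\QQ$, noting $H_1(G,\QQ)$ is finite dimensional and $H_2(G,\QQ)$ is finite dimensional for finitely generated solvable $G$ of finite $\QQ$-rank) delivers the epimorphism, hence $H\QQ$-goodness.
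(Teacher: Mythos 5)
Your proposal is correct and follows essentially the same route as the paper: induction on $s$ with Lemma \ref{lemma_extension_of_Q-prenilpotent} as the inductive engine, discreteness of $\hat G_\QQ$ for $\QQ$-prenilpotent groups giving cohomological discreteness, and Proposition \ref{prop_eqviv_2-coh-discr} (whose hypothesis that $H_2(G,\QQ)$ is finite dimensional the paper justifies by the Lyndon--Hochschild--Serre spectral sequence and induction on $s$, exactly the routine argument you gesture at) to conclude $H\QQ$-goodness. One cosmetic slip worth fixing: for an abelian group $G$ one has $\gamma_2^\QQ G={\sf tor}(G)$, not $1$, but since $\gamma_2^\QQ G=\gamma_3^\QQ G$ the base case of your induction stands unchanged.
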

\begin{proof} Using the Lyndon–Hochschild–Serre spectral sequence and induction on $s$, it is easy to see that $H_2(G,\QQ)$ is finite dimensional.  Then the assertion follows from Lemma \ref{lemma_extension_of_Q-prenilpotent} by induction and Proposition \ref{prop_eqviv_2-coh-discr}. 
\end{proof}

\begin{Remark} Note that we can not replace $\QQ$ by $\ZZ$ or $\ZZ/p$ in Proposition \ref{proposition_solvable_Q-prenilpotent}. For example, consider the group $G=\ZZ \rtimes \ZZ/2$ with the sign action of $\ZZ/2$ on $\ZZ.$ Then we have a short exact sequence $\ZZ\mono G \epi \ZZ/2,$ where $\ZZ/2\otimes R$ and $\ZZ\otimes R$ are finitely generated over $R$ for $R=\ZZ,\ZZ/2.$ But $G$ is not $R$-prenilpotent for $R=\ZZ,\ZZ/2$ because 
$\gamma_i^\ZZ(G)=\gamma^{\ZZ/2}_i(G)=2^{i-1}\ZZ\rtimes 0$ for $i\geq 2.$  
\end{Remark}

\subsection{Completions of solvable groups of finite Pr\"ufer rank}

Recall that a solvable group $G$ is of finite Pr\"ufer rank if there is a finite sequence of normal subgroups $G=U_1 \supseteq \dots \supseteq U_s=1$ such that for any $i$ the group $U_i/U_{i+1}$ is an abelian group of finite Pr\"ufer rank.

\begin{Theorem}\label{theorem_solvable_R-compl} Let $G$ be a solvable group of finite Pr\"ufer rank. Then $\hat G_\ZZ$ and $\hat G_{\ZZ/p}$ are cohomologically discrete over $\ZZ/p.$ Moreover, if $G$ is finitely generated, then $H_2(G,\ZZ/p)\to H_2(\hat G_R,\ZZ/p)$ is an epimorphism for $R\in \{\ZZ,\ZZ/p\}$. 
\end{Theorem}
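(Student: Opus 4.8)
The plan is to obtain both assertions as essentially immediate consequences of Theorem~\ref{theorem_finite_prufer_solvable} together with Proposition~\ref{prop_eqviv_2-coh-discr}. For the first assertion I would apply Theorem~\ref{theorem_finite_prufer_solvable} to the descending sequence of normal subgroups $G_i=\gamma_i^R G$ and the trivial coefficient module $M=\ZZ/p.$ The terms $\gamma_i^R G$ are indeed normal in $G$: arguing by induction, the conjugation action of $G$ on $\gamma_i^R G/[\gamma_i^R G,G]$ is trivial, so $\gamma_{i+1}^R G,$ being the kernel of $\gamma_i^R G\to \gamma_i^R G/[\gamma_i^R G,G]\otimes R,$ is preserved under conjugation. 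For $R\in\{\ZZ,\ZZ/p\}$ we have $\varprojlim G/\gamma_i^R G=\hat G_R$ by definition, and $\ZZ/p$ is a finite $\ZZ/p[\hat G_R]$-module with trivial action. Hence Theorem~\ref{theorem_finite_prufer_solvable} tells us that $\ZZ/p$ is a cohomologically discrete $\hat G_R$-module, i.e.\ $\hat G_R$ is cohomologically discrete over $\ZZ/p,$ for $R=\ZZ$ and $R=\ZZ/p.$

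For the second assertion, with $G$ finitely generated, I would verify the two hypotheses of Proposition~\ref{prop_eqviv_2-coh-discr} for $K=\ZZ/p.$ That $H_1(G,R)$ is finitely generated over $R$ is clear, since $H_1(G,R)\cong G^{\mathrm{ab}}\otimes R$ and $G^{\mathrm{ab}}$ is a finitely generated abelian group. The substantive point is finite dimensionality of $H_2(G,\ZZ/p),$ which I would get from the stronger claim that $H_n(G,M)$ is finite for every finite $\ZZ/p[G]$-module $M$ and every $n.$ I would prove this by induction on the length $s$ of an abelian series $G=U_1\supseteq\cdots\supseteq U_s=1$ with each $U_i/U_{i+1}$ abelian of finite Pr\"ufer rank: the base case $s=1$ is Lemma~\ref{lemma_finite_prufer_homology}, and for the inductive step I set $A=U_{s-1}$ (abelian of finite Pr\"ufer rank, normal in $G$) and $Q=G/A$ (solvable of finite Pr\"ufer rank with a series of length $s-1$) and run the Lyndon--Hochschild--Serre spectral sequence $H_k(Q,H_l(A,M))\Rightarrow H_{k+l}(G,M).$ Each coefficient $H_l(A,M)$ is a finite $\ZZ/p[Q]$-module by Lemma~\ref{lemma_finite_prufer_homology}, so every entry is finite by the inductive hypothesis, and therefore $H_n(G,M)$ is finite.

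With both hypotheses in hand, Proposition~\ref{prop_eqviv_2-coh-discr} finishes the argument: condition~(1) holds because $\hat G_R$ is cohomologically discrete over $\ZZ/p$ (hence in particular $2$-cohomologically discrete) by the first part, so condition~(2) yields a short exact sequence $0\to\Phi_i\to H_2(G,\ZZ/p)\to H_2(\hat G_R,\ZZ/p)\to 0$ for big enough $i,$ and in particular the map $H_2(G,\ZZ/p)\to H_2(\hat G_R,\ZZ/p)$ is surjective for both $R=\ZZ$ and $R=\ZZ/p.$

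I expect that the real content has already been absorbed into Theorem~\ref{theorem_finite_prufer_solvable}, so the points needing care here are modest bookkeeping: confirming that the lower $R$-central series consists of normal subgroups (so that Theorem~\ref{theorem_finite_prufer_solvable} applies verbatim) and checking in the finiteness induction that the conjugation $Q$-action turns $H_l(A,M)$ into a genuine finite module to which the inductive hypothesis applies. Everything else is a direct instantiation of results already established in the paper.
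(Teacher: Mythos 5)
Your proposal is correct and follows essentially the same route as the paper: cohomological discreteness is obtained by applying Theorem~\ref{theorem_finite_prufer_solvable} to the filtration $\gamma_i^R G$, finiteness of $H_n(G,\ZZ/p)$ is proved by induction along an abelian series using the Lyndon--Hochschild--Serre spectral sequence and Lemma~\ref{lemma_finite_prufer_homology}, and the surjectivity then follows from Proposition~\ref{prop_eqviv_2-coh-discr}. Your write-up merely makes explicit the bookkeeping (normality of $\gamma_i^R G$, strengthening the finiteness induction to arbitrary finite $\ZZ/p[G]$-coefficients) that the paper leaves as ``easy to check.''
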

\begin{proof}
Cohomological discreteness follows from Theorem \ref{theorem_finite_prufer_solvable}. Using that $H_n(A,\ZZ/p)$ is finite for an abelian group $A$ of finite Pr\"ufer rank, by induction, it is easy to check that $H_n(G,\ZZ/p)$ is finite. Then the epimorphism follows from Proposition \ref{prop_eqviv_2-coh-discr}.
\end{proof}

\begin{Proposition}\label{prop_pro-p-abelian} Let $A$ be a abelian group such that $A/p$  and ${}_pA$ are finite. Then $\hat A_{\ZZ/p}$ is a finitely generated $\ZZ_p$-module. Moreover, if ${\sf tor}(A)$ is finite and $M$ is a finite $\ZZ/p[\hat A_{\ZZ/p}]$-module, then
\begin{equation}\label{eq_isom_quasismal_compl}
H_*(A,M)=H_*(\hat A_{\ZZ/p},M).
\end{equation}
\end{Proposition}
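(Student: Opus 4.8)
The plan is to first identify $\hat A_{\ZZ/p}$ explicitly. For an abelian group the defining recursion collapses: since $[\gamma_i^{\ZZ/p}A,A]=0$, one gets $\gamma_{i+1}^{\ZZ/p}A=\ker(\gamma_i^{\ZZ/p}A\to\gamma_i^{\ZZ/p}A\otimes\ZZ/p)=p\,\gamma_i^{\ZZ/p}A$, hence $\gamma_i^{\ZZ/p}A=p^{i-1}A$ and $\hat A_{\ZZ/p}=\varprojlim A/p^iA$ is the $p$-adic completion. To see it is finitely generated over $\ZZ_p$, I would choose elements $x_1,\dots,x_d\in A$ whose images form a basis of the finite space $A/pA$ (with $d=\dim_{\ZZ/p}A/pA$), so that the subgroup $B$ they generate satisfies $B+pA=A$ and therefore $B+p^iA=A$ for every $i$. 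Thus $\ZZ^d\to A/p^iA$ is onto for each $i$, and since each $\ZZ^d/p^i\ZZ^d\to A/p^iA$ is a surjection of finite groups, the inverse system of kernels is automatically Mittag--Leffler; passing to the limit gives a surjection $\ZZ_p^{\,d}\epi\hat A_{\ZZ/p}$, which proves the first claim. (Only $A/p$ finite is needed here.)

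For the isomorphism \eqref{eq_isom_quasismal_compl} I would reduce nontrivial coefficients to trivial ones. Since $M$ is finite, the action of $\hat A_{\ZZ/p}$ factors through the finite image $Q\leq{\sf Aut}_\ZZ(M)$; as a quotient of a $\ZZ_p$-module, $Q$ is a finite abelian $p$-group, and both $A$ and $\hat A_{\ZZ/p}$ surject onto $Q$ compatibly. Writing $A_0=\ker(A\to Q)$ and $\hat A_0=\ker(\hat A_{\ZZ/p}\to Q)$, these act trivially on $M$, so the map $A\to\hat A_{\ZZ/p}$ induces a morphism of Lyndon--Hochschild--Serre spectral sequences with $E^2$-terms $H_s(Q,H_t(A_0,M))\to H_s(Q,H_t(\hat A_0,M))$. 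Because $A$ and $\hat A_{\ZZ/p}$ are abelian, $Q$ acts trivially on $H_t(A_0,\ZZ/p)$ and on $H_t(\hat A_0,\ZZ/p)$, so $H_t(A_0,M)\cong H_t(A_0,\ZZ/p)\otimes M$ as $Q$-modules and likewise for $\hat A_0$. Hence it suffices to prove that $A_0\to\hat A_0$ induces an isomorphism on $H_*(-,\ZZ/p)$: the comparison theorem for spectral sequences then yields \eqref{eq_isom_quasismal_compl}.

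To handle $A_0\to\hat A_0$ I would invoke Lemma \ref{lemma_f_induces_homology}(1), for which I must check that $A_0\to\hat A_0$ is an isomorphism on $-/p$ and on ${}_p(-)$. The key input is that $A\to\hat A_{\ZZ/p}$ itself is such an isomorphism. For $-/p$ this is immediate from Lemma \ref{lemma_p_limits}, which gives $\hat A_{\ZZ/p}/p=\varprojlim (A/p^iA)/p=A/pA$. For ${}_p(-)$, I would first discard the prime-to-$p$ torsion $B$ of $A$: it maps to $0$ in $\hat A_{\ZZ/p}$, and being of order prime to $p$ it changes neither ${}_pA$, nor $A/pA$, nor $H_*(A,M)$ (by a Maschke/LHS argument), so I may assume ${\sf tor}(A)={}_{(p)}A$ is a finite $p$-group. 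Here a finiteness-of-torsion argument shows that no nonzero torsion element is infinitely $p$-divisible, whence ${\sf tor}(A)$ embeds into $\hat A_{\ZZ/p}$; a dimension count using that $A/{\sf tor}(A)$ is torsion free (so that $\widehat{A/{\sf tor}(A)}_{\ZZ/p}\cong\ZZ_p^{\,r}$ is torsion free, and all $p$-torsion of $\hat A_{\ZZ/p}$ comes from ${\sf tor}(A)$) then forces ${}_pA\cong{}_p\hat A_{\ZZ/p}$. Finally, comparing the two six-term $\times p$ sequences of $A_0\mono A\epi Q$ and $\hat A_0\mono\hat A_{\ZZ/p}\epi Q$ transports these isomorphisms from $A,\hat A_{\ZZ/p}$ to $A_0,\hat A_0$. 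I expect the main obstacle to be exactly this last homological bookkeeping: identifying ${}_p(-)$ correctly (where the hypothesis that ${\sf tor}(A)$ is finite is essential) and promoting the trivial-coefficient isomorphism to the module $M$ through the finite $p$-group $Q$.
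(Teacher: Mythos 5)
Your proposal is correct, but it proves the proposition by a genuinely different route than the paper, so a comparison is worthwhile. For the first claim the paper works inside profinite group theory: it shows $\hat A_{\ZZ/p}$ is a profinite group with a finite set of generators converging to $1$ and with trivial mod-$q$ quotients for $q\ne p$, and then cites Lemma 2.5.3 and Theorem 4.3.5 of Ribes--Zalesskii; your Nakayama-style lifting of a basis of $A/p$ followed by a Mittag--Leffler passage to the limit is more elementary and, as you note, uses only that $A/p$ is finite. For the isomorphism $H_*(A,M)\cong H_*(\hat A_{\ZZ/p},M)$ the paper never compares $A$ with $\hat A_{\ZZ/p}$ directly: it checks that ${}_p(p^iA)=0$ for big $i$ (this is where finiteness of ${\sf tor}(A)$ enters) and that $p^t\hat A_{\ZZ/p}$ acts trivially on $M$, and then applies Proposition \ref{prop_inverse_limit_of_abelian} to \emph{both} $A$ and $\hat A_{\ZZ/p}$ with their $p$-power filtrations; since $A/p^iA\cong \hat A_{\ZZ/p}/p^i\hat A_{\ZZ/p}$ with matching coinvariants of $M$, both homologies are identified with the same inverse limit $\varprojlim H_*(A/p^iA,M_{p^iA})$. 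You instead push everything through the finite abelian $p$-group $Q={\rm Im}(\hat A_{\ZZ/p}\to {\sf Aut}_\ZZ(M))$, compare the two Lyndon--Hochschild--Serre spectral sequences, and reduce to showing that $A_0\to \hat A_0$ is a mod-$p$ homology isomorphism via Lemma \ref{lemma_f_induces_homology}(1); this is sound, including the $E^2$-identification $H_t(A_0,M)\cong H_t(A_0,\ZZ/p)\otimes M$ as $Q$-modules (valid because the ambient groups are abelian) and the five-lemma transfer of the isomorphisms on $-/p$ and ${}_p(-)$ down to $A_0,\hat A_0$. The one step you compress is ``all $p$-torsion of $\hat A_{\ZZ/p}$ comes from ${\sf tor}(A)$''; to fill it, note that for big $i$ one has ${\sf tor}(A)\cap p^iA=0$, so ${\sf tor}(A)\mono A/p^iA\epi T/p^iT$ (with $T=A/{\sf tor}(A)$) is exact with constant finite kernel, and the limit of these finite systems gives an exact sequence ${\sf tor}(A)\mono \hat A_{\ZZ/p}\epi \hat T_{\ZZ/p}$, where ${}_p\hat T_{\ZZ/p}=\varprojlim {}_p(T/p^iT)=0$ by Lemma \ref{lemma_p_limits} because the connecting maps $p^iT/p^{i+1}T\to p^{i-1}T/p^iT$ vanish; this is exactly where finiteness of ${\sf tor}(A)$ is used, playing the role of condition (3) of Proposition \ref{prop_inverse_limit_of_abelian} in the paper's argument. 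What each approach buys: the paper's proof is shorter because it reuses machinery needed anyway for the solvable case, while yours is self-contained (no profinite citations, no Proposition \ref{prop_inverse_limit_of_abelian}), exposes the concrete structure ${\sf tor}(A)\mono \hat A_{\ZZ/p}\epi \ZZ_p^{\,r}$, and isolates precisely which finiteness hypotheses each half of the statement requires.
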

\begin{proof} 
The multiplication by $p^i$ gives an epimorphism $A/p\epi p^iA/p^{i+1}A.$ It follows that $p^iA/p^{i+1}A$ is finite and $A/p^i$ is finite for any $i.$ 
Then $\hat A_{\ZZ/p}$ is a profinite abelian group. Note that $A/p^i$ is a product of cyclic $p$-groups, and the number of the factors does not depend on $i$ because $(A/p^i)/p=A/p.$ It follows that $\hat A_{\ZZ/p}$ has a finite set of generators converging to $1$ (see Lemma 2.5.3 of \cite{Ribes-Zalesskii}). For any other prime number $q\ne p$ we have $(\hat A_{\ZZ/p})/q=\varprojlim (A/p^i)/q=0$ (Lemma  \ref{lemma_p_limits}). Then by Theorem 4.3.5 of \cite{Ribes-Zalesskii} we obtain $\hat A_{\ZZ/p}$ is a finitely generated $\ZZ_p$-module.

Assume that ${\sf tor}(A)$ is finite. Hence ${}_p(p^iA)=0$ for big enough $i.$ 
Note that $p^iA/p^{i+1}A$ is finite and $A/p^i$ is finite.  
Consider the group of automorphisms $G={\sf Aut}_\ZZ(M)$ and assume that $p^t$ is the largest power of $p$ that divides $|G|.$ Hence $p^t \hat A_{\ZZ/p} \subseteq {\rm Ker}(\hat A_{\ZZ/p} \to G).$  It follows that $p^t\hat A_{\ZZ/p}$ acts trivially on $M.$ Then we can use  Proposition \ref{prop_inverse_limit_of_abelian} both for $A$ and $\hat A_{\ZZ/p}$. The assertion follows. 
\end{proof}

\begin{Remark} If $A$ is an abelian group of finite Pr\"ufer rank but the torsion subgroup is not finite, the isomorphism \eqref{eq_isom_quasismal_compl} fails. For example, if  $\ZZ/p^{\infty}=\varinjlim \ZZ/p^i,$ then $(\ZZ/p^{\infty})^{\wedge}_{\ZZ/p}=0 $ and
$$\ZZ/p=H_2(\ZZ/p^\infty,\ZZ/p) \ne H_2((\ZZ/p^{\infty})^{\wedge}_{\ZZ/p},\ZZ/p)=0.$$
\end{Remark}

\section{Metabelian groups over $C$}\label{sectionmetabliangroupsoverC}

\subsection{Completion of modules}

Let $\Lambda$ 
be a Noetherian commutative ring and 
$\mathfrak{a} \triangleleft \Lambda$ be an ideal. The 
$\mathfrak{a}$-adic completion of a 
$\Lambda$-module $M$ is the inverse limit 
$\hat M_{\mathfrak{a}}=\varprojlim M/M\mathfrak{a}^i.$ Denote by 
$\varphi_M:M\to \hat M_{\mathfrak{a}}$ the obvious map. 
The completion of the ring itself $\hat \Lambda_{\mathfrak{a}}$ is a Noetherian commutative ring \cite[III, \S 3, Cor. of Prop.8]{Bourbaki}  and 
$\hat M_{\mathfrak{a}}$ has a natural structure of $\hat \Lambda_{\mathfrak{a}}$-module. Moreover,
\begin{equation}\label{eq_hat_m}
\hat M_{\mathfrak{a}}=\varphi_M(M)\cdot \hat\Lambda_{\mathfrak{a}}
\end{equation}
\cite[III, \S 3, Cor.1]{Bourbaki}. The functor $M\mapsto \hat M_{\mathfrak{a}}$ is exact on the category of finitely generated $\Lambda$-modules, there is an isomorphism  
\begin{equation}\label{eq_hat_tensor}
\hat M_{\mathfrak{a}}\cong M\otimes_{\Lambda}\hat \Lambda_{\mathfrak{a}}
\end{equation}
for a finitely generated module $M$
 \cite[III, \S 3, Th.3]{Bourbaki}. In particular, if $N\leq M,$ we can identify $\hat N_{\mathfrak{a}}$ with the submodule in  $\hat M_{\mathfrak{a}}.$

Consider the ring homomorphism $\varphi=\varphi_\Lambda:\Lambda\to \hat \Lambda_\mathfrak{a}.$ Denote by 
$\hat{\mathfrak{a}}={\rm Ker}(\hat \Lambda_{\mathfrak{a}} \epi \Lambda/\mathfrak{a}).$ Applying the functor of the $\mathfrak{a}$-adic completion to the short exact sequence 
$\mathfrak{a}\mono \Lambda\epi \Lambda/\mathfrak{a}$ we get
$\hat{\mathfrak{a}}=\hat{\mathfrak{a}}_{\mathfrak{a}}.$  Using \eqref{eq_hat_m} we obtain $\hat{\mathfrak{a}}= \hat \Lambda_{\mathfrak{a}} \cdot \varphi(\mathfrak{a}).$ If we identify $\widehat{(\mathfrak{a}^i)}_{\mathfrak{a}}$ with the ideal of $\hat \Lambda_{\mathfrak{a}}$ and use \eqref{eq_hat_m}, we obtain
\begin{equation}\label{eq_hat_a}
\widehat{(\mathfrak{a}^i)}_{\mathfrak{a}}=\varphi_{\mathfrak{a}^i}(\mathfrak{a}^i)\cdot \hat \Lambda_{\mathfrak{a}}=\hat \Lambda_{\mathfrak{a}} \cdot \varphi(\mathfrak{a}^i)= \hat{\mathfrak{a}}^i.
\end{equation}
Tensoring the short exact sequence $\hat{ \mathfrak{a}}^i\mono \hat \Lambda_{\mathfrak{a}}\epi \Lambda/\mathfrak{a}^i$ 
on $M,$ we obtain the exact sequence $ M\otimes_{\Lambda} \hat{\mathfrak{a}}^i\to \hat M\to M/M\mathfrak{a}^i\to 0$, which implies
$$M/M \mathfrak{a}^i=\hat M_{\mathfrak{a}}/\hat M_{\mathfrak{a}}\hat{\mathfrak{a}}^i.$$
In particular, 
$\Lambda/\mathfrak{a}^i=\hat \Lambda_{\mathfrak{a}}/\hat{\mathfrak{a}}^i.$ It follows that $(\hat \Lambda_\mathfrak{a})^\wedge_{\hat{\mathfrak{a}}}=\hat \Lambda_{\mathfrak{a}}.$ Assume that $\mathcal M$ is a finitely generated $\hat \Lambda_{\mathfrak{a}}$-module. Using \eqref{eq_hat_a} we obtain $\mathcal M\cdot \hat{\mathfrak{a}}^i=\mathcal M \cdot \mathfrak{a}^i.$ Then, using \eqref{eq_hat_tensor}, the isomorphism $(\hat \Lambda_\mathfrak{a})^\wedge_{\hat{\mathfrak{a}}}=\hat \Lambda_{\mathfrak{a}}$ and the equality $\mathcal M\cdot \hat{\mathfrak{a}}^i=\mathcal M \cdot \mathfrak{a}^i,$ we get
\begin{equation}\label{eq_hat_mathcal_M}
\hat{\mathcal M}_{\hat{\mathfrak{a}}}=\hat{\mathcal M}_{\mathfrak{a}}=\mathcal M.
\end{equation}

\begin{Lemma}
Let $\Lambda$ be a Noetherian commutative ring, $\mathfrak{a},\mathfrak{b}$ are ideals of $\Lambda$   and $M$ be a finitely generated $\Lambda$-module. Then
$$(\hat M_{\mathfrak{a}})^\wedge_{\mathfrak{b}}=\hat M_{\mathfrak{a}+\mathfrak{b} }= (\hat M_{\mathfrak{b}})^\wedge_{\mathfrak{a}}.$$ 
\end{Lemma}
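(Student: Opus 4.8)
The plan is to prove the claimed identity
$$(\hat M_{\mathfrak{a}})^\wedge_{\mathfrak{b}}=\hat M_{\mathfrak{a}+\mathfrak{b}}=(\hat M_{\mathfrak{b}})^\wedge_{\mathfrak{a}}$$
by establishing just one of the two equalities, say $(\hat M_{\mathfrak{a}})^\wedge_{\mathfrak{b}}=\hat M_{\mathfrak{a}+\mathfrak{b}}$, since the other follows by the symmetric roles of $\mathfrak{a}$ and $\mathfrak{b}$. The central tool is the tensor description \eqref{eq_hat_tensor}, which is available because $\Lambda$ is Noetherian and $M$ is finitely generated: completion of a finitely generated module is given by extension of scalars along $\Lambda\to\hat\Lambda_{\mathfrak{b}}$ (and likewise for the other completions). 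The strategy is to reduce the entire statement to the case $M=\Lambda$ of the ring itself, and then to identify the relevant completed rings.

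First I would reduce to the ring. By \eqref{eq_hat_tensor} we have $\hat M_{\mathfrak{a}}\cong M\otimes_\Lambda \hat\Lambda_{\mathfrak{a}}$; this is a finitely generated $\hat\Lambda_{\mathfrak{a}}$-module, and $\hat\Lambda_{\mathfrak{a}}$ is Noetherian by the cited corollary of Bourbaki, so the $\mathfrak{b}$-adic completion of it may again be computed by \eqref{eq_hat_tensor}, now over $\hat\Lambda_{\mathfrak{a}}$ with respect to the extended ideal $\mathfrak{b}\cdot\hat\Lambda_{\mathfrak{a}}$. Chaining these and using associativity of the tensor product, the problem becomes the ring-level identity $(\hat\Lambda_{\mathfrak{a}})^\wedge_{\mathfrak{b}}\cong\hat\Lambda_{\mathfrak{a}+\mathfrak{b}}$ together with the observation that $M\otimes_\Lambda(\hat\Lambda_{\mathfrak{a}})^\wedge_{\mathfrak{b}}\cong M\otimes_\Lambda\hat\Lambda_{\mathfrak{a}+\mathfrak{b}}$ recovers the module statement.

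For the ring-level identity the natural approach is to exhibit both sides as the inverse limit over the same cofinal system of quotients. The $(\mathfrak{a}+\mathfrak{b})$-adic topology on $\Lambda$ has the powers $(\mathfrak{a}+\mathfrak{b})^n$ as a base of neighborhoods of zero, and since $(\mathfrak{a}+\mathfrak{b})^{2n}\subseteq \mathfrak{a}^n+\mathfrak{b}^n\subseteq(\mathfrak{a}+\mathfrak{b})^n$, the filtration by the ideals $\mathfrak{a}^n+\mathfrak{b}^n$ is cofinal with the $(\mathfrak{a}+\mathfrak{b})$-adic filtration and hence defines the same completion. On the other hand, computing $(\hat\Lambda_{\mathfrak{a}})^\wedge_{\mathfrak{b}}$ means first passing to $\hat\Lambda_{\mathfrak{a}}=\varprojlim_m\Lambda/\mathfrak{a}^m$ and then completing $\mathfrak{b}$-adically; using \eqref{eq_hat_a} to identify powers of the extended ideal with completed powers, together with the relation $M/M\mathfrak{a}^i=\hat M_{\mathfrak{a}}/\hat M_{\mathfrak{a}}\hat{\mathfrak{a}}^i$ derived just above the lemma, one sees $(\hat\Lambda_{\mathfrak{a}})^\wedge_{\mathfrak{b}}=\varprojlim_{m,n}\Lambda/(\mathfrak{a}^m+\mathfrak{b}^n)$, a double inverse limit over the cofinal family $\{\mathfrak{a}^m+\mathfrak{b}^n\}$. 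Comparing the two cofinal systems yields the desired isomorphism.

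The main obstacle I expect is purely bookkeeping: verifying that the completions of the extended ideals behave as expected, i.e.\ that completing $\hat\Lambda_{\mathfrak{a}}$ with respect to $\mathfrak{b}$ (viewed through $\varphi$) is the same as completing with respect to the honestly extended ideal $\mathfrak{b}\cdot\hat\Lambda_{\mathfrak{a}}$, and that the resulting double-indexed inverse limit is genuinely cofinal with the single $(\mathfrak{a}+\mathfrak{b})$-adic one. These are exactly the kinds of identifications that \eqref{eq_hat_a} and \eqref{eq_hat_mathcal_M} are designed to supply, so the argument should be short once the cofinality estimate $(\mathfrak{a}+\mathfrak{b})^{2n}\subseteq\mathfrak{a}^n+\mathfrak{b}^n$ is invoked; there is no deep difficulty, only the need to keep the Noetherian and finite-generation hypotheses visible so that \eqref{eq_hat_tensor} applies at each stage.
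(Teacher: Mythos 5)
Your proposal is correct and is essentially the paper's own argument: both proofs express the iterated completion as the double inverse limit $\varprojlim_j\varprojlim_i\, M/(M\mathfrak{a}^i+M\mathfrak{b}^j)$ (justified by \eqref{eq_hat_tensor}, \eqref{eq_hat_a} and \eqref{eq_hat_mathcal_M}), and then conclude by cofinality of the diagonal in $\mathbb N^2$ together with the inclusions $(\mathfrak{a}+\mathfrak{b})^{2i}\subseteq\mathfrak{a}^i+\mathfrak{b}^i\subseteq(\mathfrak{a}+\mathfrak{b})^i$. The only difference is organizational: you first reduce to the case $M=\Lambda$ by extension of scalars and prove the ring-level identity $(\hat\Lambda_{\mathfrak{a}})^\wedge_{\mathfrak{b}}\cong\hat\Lambda_{\mathfrak{a}+\mathfrak{b}}$, whereas the paper runs the same double-limit computation directly on $M$, applying \eqref{eq_hat_mathcal_M} to $\mathcal M=\hat M_{\mathfrak{a}}/\hat M_{\mathfrak{a}}\mathfrak{b}^j$, which makes the reduction step unnecessary.
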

\begin{proof} Note that $\hat M\mathfrak{a}^i=\hat M\hat{\mathfrak{a}}^i.$ Then we have  $\hat M_{\mathfrak{a}}/\hat M_{\mathfrak{a}} \mathfrak{a}^i=M/M\mathfrak{a}^i.$ It follows that 
$$\frac{M}{M\mathfrak{a}^i+M\mathfrak{b}^j}=\frac{\hat M_{\mathfrak{a}}}{\hat M_\mathfrak{a} \mathfrak{a}^i+\hat M_\mathfrak{a} \mathfrak{b}^j}.$$ If we take $\mathcal M=\hat M_{\mathfrak{a}}/\hat M_{\mathfrak{a}}\mathfrak{b}^j,$ then \eqref{eq_hat_mathcal_M} implies $$\varprojlim\limits_{i} \ \frac{\hat M_{\mathfrak{a}}}{\hat M_\mathfrak{a} \mathfrak{a}^i+\hat M_\mathfrak{a} \mathfrak{b}^j} = \hat M_{\mathfrak{a}}/\hat M_{\mathfrak{a}}\mathfrak{b}^j.$$
Therefore we have 
$$ \varprojlim_{j}\ \varprojlim\limits_{i} \ \frac{M}{M\mathfrak{a}^i+M\mathfrak{b}^j} = (\hat M_{\mathfrak{a}})^\wedge_{\mathfrak{b}}.$$ Using that a double inverse limit is the limit over the ordered set $\mathbb N^2$ and that the diagonal $\{(i,i)\mid i\in \mathbb N\} $ is a cofinal subset, we obtain 
$$\varprojlim\limits_{i} \ \frac{M}{M\mathfrak{a}^i+M\mathfrak{b}^i} = (\hat M_{\mathfrak{a}})^\wedge_{\mathfrak{b}}.$$ Finally, using that $ M(\mathfrak{a}+\mathfrak{b})^{2i} \subseteq M\mathfrak{a}^i+M\mathfrak{b}^i\subseteq M(\mathfrak{a}+\mathfrak{b})^i,$ we obtain $\hat M_{\mathfrak{a}+\mathfrak{b}}=(\hat M_{\mathfrak{a}})^\wedge_{\mathfrak{b}}.$
\end{proof}

Let $A$ be a finitely generated abelian group and $\Lambda$ be a Noetherian commutative ring. Then the group ring $\Lambda[A]$ is a Noetherian commutative ring. We denote by $I_\Lambda$ the augmentation ideal of $\Lambda[A]$, denote by $p\cdot \Lambda[A]$ the ideal of $\ZZ[A]$ generated by $p\in \Lambda\subset \Lambda[A]$ and set $$I_{p,\Lambda}:=I_{\Lambda}+p\cdot \Lambda[A]= {\rm Ker}(\Lambda[A] \epi \Lambda/p).$$
Moreover, we set 
$$I=I_\ZZ, \hspace{1cm} I_p=I_{p,\ZZ}.$$

 It is easy to see that, if $M$ is a $\Lambda[A]$-module, then its $\ZZ/p$-completion as an abelian group coincides with the $p\cdot \Lambda[A]$-adic completion  $$\hat M_{\ZZ/p}=\hat M_{p\cdot \Lambda[A]}.$$
In particular, it does not depend on $\Lambda.$ Moreover, if $A$ is generated by $t_1,\dots,t_n,$
 then $I_\Lambda=\sum \Lambda[A](t_i-1)$   and $$MI_\Lambda=\sum M(t_i-1)=MI, \hspace{1cm} MI_{p,\Lambda}=MI+pM=MI_p.$$ It follows that the completions do not depend on $\Lambda$ 
$$\hat M_{I_\Lambda}=\hat M_I,\hspace{1cm} \hat M_{I_{p,\Lambda}}=\hat M_{I_p} $$
and we can use the notations $\hat M_I$ and $\hat M_{I_p}$ for any $\Lambda.$

\begin{Corollary}\label{cor_comm_compl} Let $A$ be a finitely generated abelian group, $\Lambda$ be a Noetherian commutative ring and $M$ be a finitely generated $\Lambda[A]$-module. Then 
$$(\hat M_{\ZZ/p})^\wedge_{I}=\hat M_{I_p}=(\hat M_{I})^\wedge_{\ZZ/p}.$$ 
\end{Corollary}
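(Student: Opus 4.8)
The plan is to obtain the Corollary as a direct specialization of the preceding Lemma, applied not to $\Lambda$ itself but to the group ring $\Lambda[A]$. This ring is Noetherian and commutative because $A$ is a finitely generated abelian group and $\Lambda$ is Noetherian, and $M$ is by hypothesis a finitely generated $\Lambda[A]$-module, so the Lemma applies with $\mathfrak a = p\cdot\Lambda[A]$ and $\mathfrak b = I_\Lambda$. Since $\mathfrak a + \mathfrak b = p\cdot\Lambda[A] + I_\Lambda = I_{p,\Lambda}$, it yields the chain
$$(\hat M_{p\cdot\Lambda[A]})^\wedge_{I_\Lambda} = \hat M_{I_{p,\Lambda}} = (\hat M_{I_\Lambda})^\wedge_{p\cdot\Lambda[A]},$$
and the entire remaining task is to translate the three $\Lambda[A]$-adic expressions into the intrinsic notation of the statement.

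For the central term I would invoke the identifications recorded just before the Corollary: the $\ZZ/p$-completion of a $\Lambda[A]$-module equals its $p\cdot\Lambda[A]$-adic completion, and the $I_{p,\Lambda}$-adic completion is independent of $\Lambda$; thus $\hat M_{p\cdot\Lambda[A]} = \hat M_{\ZZ/p}$, $\hat M_{I_\Lambda} = \hat M_I$ and $\hat M_{I_{p,\Lambda}} = \hat M_{I_p}$. The one point deserving attention is that the two outer operations $(-)^\wedge_{I_\Lambda}$ and $(-)^\wedge_{p\cdot\Lambda[A]}$ are applied to the intermediate modules $\hat M_{\ZZ/p}$ and $\hat M_I$, not to $M$ itself; so I must check that the $\Lambda$-independence of these completions survives at that level. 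This is where I expect the only (very mild) care to be needed, since the statement preceding the Corollary was phrased for $M$ alone.

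The verification is immediate once one observes that the relevant submodule identities are literal and hold for \emph{every} $\Lambda[A]$-module $N$. Viewing $\hat M_{\ZZ/p}$ and $\hat M_I$ as $\Lambda[A]$-modules by restriction of scalars along the respective completion homomorphisms, one has $N\cdot I_\Lambda = \sum_i N(t_i-1) = N\cdot I$ and $N\cdot p\Lambda[A] = pN$, where $t_1,\dots,t_n$ generate $A$. Hence the $I_\Lambda$-adic and $I$-adic filtrations of $N$ coincide, as do the $p\cdot\Lambda[A]$-adic and $\ZZ/p$-adic ones, so that $(-)^\wedge_{I_\Lambda} = (-)^\wedge_{I}$ and $(-)^\wedge_{p\cdot\Lambda[A]} = (-)^\wedge_{\ZZ/p}$ on these intermediate modules as well. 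Substituting all of these identifications into the displayed chain produces exactly $(\hat M_{\ZZ/p})^\wedge_I = \hat M_{I_p} = (\hat M_I)^\wedge_{\ZZ/p}$. Since the genuine content is carried entirely by the preceding Lemma, there is no real obstacle; the whole argument is the bookkeeping that makes the reindexing legitimate for the completed modules.
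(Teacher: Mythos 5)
Your proof is correct and follows exactly the route the paper intends: the Corollary is the preceding Lemma applied to the Noetherian ring $\Lambda[A]$ with $\mathfrak a = p\cdot\Lambda[A]$, $\mathfrak b = I_\Lambda$, combined with the $\Lambda$-independence identities ($NI_\Lambda = NI$, $N\cdot p\Lambda[A]=pN$) recorded just before the statement. Your extra care in noting that these identities hold for arbitrary $\Lambda[A]$-modules, hence also for the intermediate completions $\hat M_{\ZZ/p}$ and $\hat M_I$, is precisely the bookkeeping the paper leaves implicit.
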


By $\ZZ_p$ we denote the ring of $p$-adic integers 
$\ZZ_p=\varprojlim \ZZ/p^i =\hat \ZZ_{\ZZ/p}.$

\begin{Corollary}\label{cor_comm_Z_p-mod} Let $A$ be a finitely generated abelian group and $M$ be a $\ZZ_p[A]$-module, which is finitely generated over $\ZZ_p.$ Then 
$$ \varprojlim\ (M/p^i)^\wedge_I=\hat M_I = \hat M_{I_p}.$$
\end{Corollary}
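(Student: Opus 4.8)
The plan is to realise this corollary as a direct specialisation of the preceding double-completion results, the only genuinely new ingredient being that a finitely generated $\ZZ_p$-module is already $p$-adically complete. I would take $\Lambda=\ZZ_p$ and apply Corollary \ref{cor_comm_compl} to $M$, which is legitimate because $\ZZ_p[A]$ is Noetherian (since $\ZZ_p$ is Noetherian and $A$ is finitely generated abelian) and $M$, being finitely generated over $\ZZ_p\subseteq \ZZ_p[A]$, is finitely generated over $\ZZ_p[A]$. This immediately yields the chain
$$(\hat M_{\ZZ/p})^\wedge_I=\hat M_{I_p}=(\hat M_I)^\wedge_{\ZZ/p}.$$

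For the right-hand equality $\hat M_I=\hat M_{I_p}$, I would observe that $\hat M_{\ZZ/p}=\varprojlim_i M/p^iM$ and argue that this equals $M$: since $M$ is finitely generated over the $p$-complete principal ideal domain $\ZZ_p$, it splits as a finite direct sum of cyclic $\ZZ_p$-modules, each of which (being $\ZZ_p$ or some $\ZZ/p^k$) is $p$-adically complete, so $M$ itself is $p$-adically complete. Feeding $\hat M_{\ZZ/p}=M$ into the first term of the chain gives $(\hat M_{\ZZ/p})^\wedge_I=\hat M_I$, whence $\hat M_I=\hat M_{I_p}$.

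For the left-hand equality $\varprojlim_i (M/p^i)^\wedge_I=\hat M_{I_p}$, I would expand the $\ZZ/p$-completion in the last term of the chain as $(\hat M_I)^\wedge_{\ZZ/p}=\varprojlim_i \hat M_I/p^i\hat M_I$ and identify each layer. Reusing the intermediate computation from the proof of the Lemma preceding Corollary \ref{cor_comm_compl} (with $\mathfrak a=I$ and $\mathfrak b=p\cdot\ZZ_p[A]$, so that $\hat M_I(p\cdot\ZZ_p[A])^i=p^i\hat M_I$), one has
$$\hat M_I/p^i\hat M_I=\varprojlim_j M/(MI^j+p^iM)=(M/p^iM)^\wedge_I=(M/p^i)^\wedge_I.$$
Taking $\varprojlim_i$ then gives $(\hat M_I)^\wedge_{\ZZ/p}=\varprojlim_i (M/p^i)^\wedge_I$, which together with the chain closes the argument.

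The routine formal manipulations — exactness of completion on finitely generated modules, independence of $\hat M_I$ and $\hat M_{I_p}$ from the base ring, and the cofinality of the diagonal in $\mathbb N^2$ — are all already packaged in the earlier lemmas, so the only point requiring genuine care is the completeness statement $\hat M_{\ZZ/p}=M$. This is where the hypothesis that $M$ is finitely generated \emph{over $\ZZ_p$} (rather than merely over $\ZZ_p[A]$) is essential, and I expect it to be the main, though modest, obstacle.
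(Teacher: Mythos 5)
Your proposal is correct and follows essentially the same route as the paper: both arguments rest on Corollary \ref{cor_comm_compl} applied with $\Lambda=\ZZ_p$, the completeness $\hat M_{\ZZ/p}=M$ of a finitely generated $\ZZ_p$-module, and an identification of $(M/p^i)^\wedge_I$ with $\hat M_I/p^i\hat M_I$ followed by passage to the inverse limit over $i$. The only cosmetic difference is that the paper gets this identification from the tensor formula $\hat N_{\mathfrak a}\cong N\otimes_{\ZZ_p[A]}\widehat{\ZZ_p[A]}_{\mathfrak a}$, writing $(M/p^i)^\wedge_I=(\ZZ/p^i\otimes M)\otimes_{\ZZ_p[A]}\widehat{\ZZ_p[A]}_I=(\hat M_I)/p^i$, whereas you re-run the intermediate double-limit computation from the preceding Lemma; both are legitimate, and you additionally supply the (omitted but needed) justification that $\hat M_{\ZZ/p}=M$ via the structure theorem over the PID $\ZZ_p$.
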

\begin{proof}
It follows from the equation $(M/p^i)^\wedge_I=(\ZZ/p^i\otimes M) \otimes_{\ZZ_p[A]} \widehat{\ZZ_p[A]}_I  =\ZZ/p^i\otimes (M \otimes_{\ZZ_p[A]} \widehat{\ZZ_p[A]}_I)= (\hat M_I)/p^i,$ the equation $\hat M_{\ZZ/p}=M$ and Corollary  \ref{cor_comm_compl}.
\end{proof}

By $C$ we denote the infinite cyclic group.

\begin{Lemma}\label{lemma_k[c]-mod} Let $ \mathcal K$ be an Artinian commutative ring and $M$ be a $\mathcal K[C]$-module, which is finitely generated over $\mathcal K.$ Then the $C$-module $\hat M_I$ is nilpotent, the homomorphism $M\to \hat M_I$ is a split homomorphism of $C$-modules and there is an isomorphism $$M=MI^{\infty} \oplus \hat M_I,$$
which is natural by $M,$ 
 where $MI^\infty=\bigcap_{i=1}^\infty MI^i.$ 
\end{Lemma}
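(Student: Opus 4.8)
The plan is to reduce the whole statement to Fitting's lemma for the $\mathcal K$-linear operator $\theta$ of multiplication by $t-1$, where $C=\langle t\rangle$. Since the $C$-action commutes with the $\mathcal K$-action, the map $\theta\colon M\to M$ is a $\mathcal K$-module endomorphism, and by the computation $MI=M(t-1)$ recalled above we have $MI^i={\rm Im}(\theta^i)=M(t-1)^i$ for every $i$. Because $\mathcal K$ is Artinian and $M$ is finitely generated over $\mathcal K$, the module $M$ has finite length over $\mathcal K$; this is exactly the hypothesis under which Fitting's lemma is available for $\theta$.

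First I would apply Fitting's lemma: for $n$ large enough there is a $\mathcal K$-module decomposition $M={\rm Ker}(\theta^n)\oplus {\rm Im}(\theta^n)$, with $\theta$ nilpotent on the first summand and bijective on the second. The images $M(t-1)^i$ form a descending chain of finite-length modules, hence stabilise, so ${\rm Im}(\theta^n)=\bigcap_{i\ge 1}MI^i=MI^\infty$; writing $N:={\rm Ker}(\theta^n)$, the decomposition becomes $M=N\oplus MI^\infty$. Both summands are $C$-submodules since $\theta$ commutes with $t$, so this is a splitting of $C$-modules, which already supplies the isomorphism $M=MI^\infty\oplus N$ of the statement once $N$ is identified with $\hat M_I$.

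Next I would carry out that identification. Stabilisation of the chain makes the transition maps in $\varprojlim M/MI^i$ eventually the identity, whence $\hat M_I=M/MI^\infty$, and the projection $M\to M/MI^\infty$ restricts to an isomorphism of $C$-modules $N\xrightarrow{\cong}\hat M_I$. Under this identification the canonical map $M\to\hat M_I$ is the quotient $M\to M/MI^\infty$, which is split by the inclusion $N\hookrightarrow M$, a $C$-module homomorphism; and $\theta$ is nilpotent on $N\cong\hat M_I$, i.e.\ $\hat M_I\, I^n=0$, so $\hat M_I$ is a nilpotent $C$-module. This yields all three assertions simultaneously.

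The step I expect to require the most care is naturality in $M$. For a morphism $f\colon M\to M'$ of such modules, commuting with $\theta$ forces $f({\rm Ker}(\theta^n))\subseteq{\rm Ker}(\theta^n)$ and $f({\rm Im}(\theta^n))\subseteq{\rm Im}(\theta^n)$; choosing a single $n$ large enough for both $M$ and $M'$ (possible since both have finite length) shows that $f$ respects the two summands and hence the decomposition. Thus $M\mapsto(MI^\infty,N)$ is functorial and the isomorphism $M=MI^\infty\oplus\hat M_I$ is natural. The Fitting step itself is routine; it is this uniform choice of $n$, together with the verification that all the identifications are $C$-linear, that constitutes the real bookkeeping.
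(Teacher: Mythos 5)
Your proposal is correct and takes essentially the same route as the paper: the paper's proof also applies Fitting's lemma (cited as \cite[Cor. 6.4.2]{Kash}) to the operator $b=t-1$ on the finite-length $\mathcal K$-module $M$, notes $MI^i={\rm Im}(b^i)$, and reads off the stabilisation, the identification $\hat M_I={\rm Ker}(b^n)$, and the decomposition $M=MI^\infty\oplus\hat M_I$. Your extra bookkeeping on $C$-linearity and naturality merely makes explicit what the paper leaves implicit.
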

\begin{proof}
We denote by $a:M\to M$ the multiplication on $t$ and set $b:=a-1.$  Then there is positive integer $n$ such that ${\rm Im}(b^n)={\rm Im}(b^{n+1}),$ ${\rm Ker}(b^n)={\rm Ker}(b^{n+1})$ and $V={\rm Im}(b^n) \oplus {\rm Ker}(b^n)$ \cite[Cor. 6.4.2]{Kash}. Moreover, it is easy to see that $MI^i={\rm Im}(b^i).$ It follows that $MI^{i}={\rm Im}(b^n)$ for $i\geq n,$  $\hat V_{I}={\rm Ker}(b^n)$ and $V\cong \hat V_{I}\oplus VI^i.$
\end{proof}

\begin{Lemma}\label{lemma_compl_of_Z_p-mod} Let $\mathcal M$ be a $\ZZ_p[C]$-module, which is finitely generated over $\ZZ_p$. Then $\mathcal M_I=\mathcal M_{I_p}$ the homomorphism 
$ \mathcal M \to \hat{\mathcal M}_I$ splits in the category of $C$-modules and there is a natural isomorphism 
$$\mathcal M=\mathcal MI^\infty \oplus \hat{\mathcal M}_I,$$
where $\mathcal MI^\infty=\bigcap \mathcal MI^i.$
Moreover, if $C=\langle t \rangle$ and $\mathcal M(t-1)\subseteq p\mathcal M,$ then $\mathcal M\cong \hat{\mathcal M}_I.$
\end{Lemma}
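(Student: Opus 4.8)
The plan is to bootstrap from the Artinian case of Lemma~\ref{lemma_k[c]-mod} by reducing $\mathcal M$ modulo the powers of $p$ and reassembling the resulting splittings in an inverse limit. To begin with, the equality of completions $\hat{\mathcal M}_I=\hat{\mathcal M}_{I_p}$ is precisely Corollary~\ref{cor_comm_Z_p-mod}, which in addition identifies $\hat{\mathcal M}_I=\varprojlim_i (\mathcal M/p^i)^\wedge_I$. Writing $\mathcal M_i=\mathcal M/p^i\mathcal M$, each $\mathcal M_i$ is finitely generated over the Artinian ring $\ZZ/p^i$, so Lemma~\ref{lemma_k[c]-mod} provides a $C$-module section $s_i\colon (\mathcal M_i)^\wedge_I\to \mathcal M_i$ of the completion map $\pi_i\colon \mathcal M_i\to (\mathcal M_i)^\wedge_I$, together with the decomposition $\mathcal M_i=\mathcal M_iI^\infty\oplus(\mathcal M_i)^\wedge_I$.

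The decisive structural observation is that this decomposition is manufactured purely from ${\rm Ker}(b^n)$ and ${\rm Im}(b^n)$ with $b=t-1$, so it depends only on the $C$-action and is functorial for \emph{all} $C$-module homomorphisms; in particular it is respected by the transition maps $\mathcal M_{i+1}\to \mathcal M_i$, notwithstanding that these relate modules over the different base rings $\ZZ/p^{i+1}$ and $\ZZ/p^i$. Hence $\{\pi_i\}$ and $\{s_i\}$ are morphisms of inverse systems. I would then pass to the limit: since $\mathcal M$ is finitely generated over the complete ring $\ZZ_p$ it is $p$-adically complete, so $\mathcal M=\varprojlim_i\mathcal M_i$, and the map $\pi:=\varprojlim_i\pi_i$ is the natural completion map $\mathcal M\to\hat{\mathcal M}_I$, while $s:=\varprojlim_i s_i$ is a $C$-module map with $\pi s={\rm id}$. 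Because ${\rm Ker}(\pi)=\bigcap_i\mathcal MI^i=\mathcal MI^\infty$ directly from the definition of the $I$-adic completion, the section $s$ yields the asserted splitting and the natural decomposition $\mathcal M=\mathcal MI^\infty\oplus\hat{\mathcal M}_I$.

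For the last assertion, the hypothesis $\mathcal M(t-1)\subseteq p\mathcal M$ gives $\mathcal MI\subseteq p\mathcal M$, hence $\mathcal MI^i\subseteq p^i\mathcal M$ and $\mathcal MI^\infty\subseteq\bigcap_i p^i\mathcal M=0$, the last equality holding because a finitely generated $\ZZ_p$-module is $p$-adically separated. The decomposition then collapses to $\mathcal M\cong\hat{\mathcal M}_I$.

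The point demanding the most care is the passage to the limit, for a naive Fitting decomposition of $\mathcal M$ itself over $\ZZ_p$ is simply false: over the non-Artinian ring $\ZZ_p$ the descending chain $\mathcal MI^i$ need not stabilize, and $\hat{\mathcal M}_I$ is in general strictly larger than the eventual kernel $\bigcup_n{\rm Ker}(t-1)^n$ (already for $\mathcal M=\ZZ_p$ with $t$ acting by $1+p$). This is exactly why one must descend to the Artinian quotients; the naturality of the splitting in Lemma~\ref{lemma_k[c]-mod}, combined with Corollary~\ref{cor_comm_Z_p-mod}, is what ensures the inverse system of short exact sequences splits degreewise, so that no $\varprojlim^1$ term intervenes and $\varprojlim_i\pi_i$ is genuinely the completion map with kernel $\mathcal MI^\infty$.
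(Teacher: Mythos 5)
Your proof is correct and takes essentially the same route as the paper's: reduce modulo $p^i$, apply Lemma \ref{lemma_k[c]-mod} to the Artinian quotients $\mathcal M/p^i$, use naturality of that splitting to assemble the sections into a map of inverse systems, and identify the limit with $\hat{\mathcal M}_I=\hat{\mathcal M}_{I_p}$ via Corollary \ref{cor_comm_Z_p-mod}, with the kernel $\mathcal MI^\infty$ and the separatedness argument for the final claim handled identically. Your explicit justifications of functoriality across the varying base rings $\ZZ/p^i$ and of $p$-adic separatedness of finitely generated $\ZZ_p$-modules merely spell out points the paper leaves implicit.
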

\begin{proof} Since $\mathcal M$ is a finitely generated $\ZZ_p$-module, $\mathcal M$ is isomorphic to $\ZZ_p^n\oplus \bigoplus_{j=1}^m \ZZ/p^{k_j}$ as an abelian group.
The quotient $\mathcal M/p^i$ is a $\ZZ/p^i[C]$-module, which is finitely generated over $\ZZ/p^i.$  It follows that the map $\mathcal M/p^i \to (\mathcal M/p^i)^\wedge_I$ is a split epimorphism of $\ZZ/p^i[C]$-modules. Denote the section by $s_i:(\mathcal M/p^i)^\wedge_I \to  \mathcal M/p^i.$ Moreover, using that the section is natural, we obtain that the diagram
$$
\begin{tikzcd}
(\mathcal M/p^{i})^\wedge_I\arrow[r,"s_i"]\arrow[d] &\mathcal M/p^{i}  \arrow[d] \\
(\mathcal M/p^{i-1})^\wedge_I\arrow[r,"s_{i-1}"] & \mathcal M/p^{i-1}\\
\end{tikzcd}
$$
is commutative. It follows that $\{s_i\}$ induce a section on the level of inverse limits. Using Corollary \ref{cor_comm_Z_p-mod}, we obtain that $\mathcal M \to \hat{\mathcal M}_I$ is a split epimorphism of $C$-modules and the section is natural. The kernel of the epimorphism equals to $\mathcal MI^\infty.$ Then $\mathcal M=\mathcal MI^\infty \oplus \hat{\mathcal M}_I.$ If $\mathcal MI=\mathcal M(t-1)\subseteq p\mathcal M,$ then $\mathcal MI^i\subseteq p^i\mathcal M,$ and hence, $\mathcal MI^\infty=0$ and $\mathcal M=\hat{\mathcal M}_I.$
\end{proof}

\subsection{Completion of metabelian groups over $C$}

\begin{Lemma}\label{lemma_homology_of_nilpotent} Let $G$ be a group,  $N$ be a nilpotent $G$-module and $X$ be an abelian group. Then $H_n(N,X)$ is a nilpotent $G$-module.
\end{Lemma}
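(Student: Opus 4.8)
The plan is to argue by induction on the nilpotency class of $N$ as a $G$-module, using the Lyndon–Hochschild–Serre spectral sequence of a $G$-equivariant extension of $N$. Throughout, $G$ acts on $H_n(N,X)$ through its action on $N$ by functoriality (the coefficients $X$ carry the trivial action), and I write $I_G$ for the augmentation ideal of $\ZZ[G]$; recall that a $G$-module $P$ is nilpotent precisely when $PI_G^k=0$ for some $k$.

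First I would set up the inductive step. Let $c$ be the class of $N$, so $NI_G^c=0$, and put $N'=NI_G$. Then $N'$ is a $G$-submodule, $N/N'$ carries the trivial $G$-action, and $N'I_G^{c-1}=NI_G^c=0$, so $N'$ has class at most $c-1$. The inclusion $N'\mono N$ and projection $N\epi N/N'$ form a $G$-equivariant extension of abelian groups (each $g\in G$ acts by an automorphism preserving $N'$), hence induce a $G$-equivariant first-quadrant spectral sequence
$$E^2_{p,q}=H_p(N/N',H_q(N',X))\ \Longrightarrow\ H_{p+q}(N,X),$$
where, because $N$ is abelian, the quotient $N/N'$ acts trivially on $H_q(N',X)$, and the whole spectral sequence together with its abutment filtration is acted on by $G$.

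The base case $c\le 1$ is immediate: then $N$ is a trivial $G$-module, so every $g$ acts as the identity on $N$ and hence on $H_n(N,X)$, giving a trivial — in particular nilpotent — action. For the inductive step, by the induction hypothesis each $H_q(N',X)$ is a nilpotent $G$-module, say annihilated by $I_G^{k_q}$. By the universal coefficient theorem $H_p(N/N',A)$ is built naturally from $H_p(N/N',\ZZ)\otimes A$ and $\mathrm{Tor}(H_{p-1}(N/N',\ZZ),A)$; since $G$ acts trivially on $N/N'$ and hence on $H_*(N/N',\ZZ)$, the $G$-action on $E^2_{p,q}$ is induced solely by the action on $A=H_q(N',X)$. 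Tensoring or applying $\mathrm{Tor}$ with a trivial $G$-module is an additive functor in the coefficient variable, so the action of any $r\in I_G^{k_q}$ becomes $\mathrm{id}\otimes\mu_r$ (resp. $\mathrm{Tor}(\mathrm{id},\mu_r)$), where $\mu_r\colon A\to A$ is multiplication by $r$; as $\mu_r=0$, we get $E^2_{p,q}I_G^{k_q}=0$. Taking $k=\max_{q\le n}k_q$, every $E^2_{p,q}$ with $p+q=n$ is annihilated by $I_G^{k}$, and therefore so is each subquotient $E^\infty_{p,q}$. Finally $H_n(N,X)$ carries a finite filtration of length $\le n+1$ by $G$-submodules with quotients $E^\infty_{p,q}$, so $H_n(N,X)I_G^{k(n+1)}=0$ and $H_n(N,X)$ is nilpotent.

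The main thing to be careful about is the $G$-equivariance of the spectral sequence and, crucially, the fact that on $E^2$ the $G$-action is concentrated in the coefficient variable $H_q(N',X)$: this is exactly what lets the nilpotency of $H_q(N',X)$ propagate to $E^2$, and then, through the finite abutment filtration, to $H_n(N,X)$, with a uniform bound on the power of $I_G$. Both points rest on $N$ being abelian, which forces $N/N'$ to act trivially on $H_q(N',X)$ and makes every term in sight functorial in the coefficients.
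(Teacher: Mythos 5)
Your proof is correct, and it runs the induction in the direction dual to the paper's. The paper peels a trivial $G$-submodule off the \emph{bottom}: it takes $T\mono N\epi N'$ with $T$ a trivial $G$-module and $N'$ of class $m-1$, so that in the spectral sequence $H_i(N',H_j(T,X))\Rightarrow H_{i+j}(N,X)$ the inductive hypothesis applies directly in the group variable $N'$ (the coefficients $H_j(T,X)$ carry trivial $G$-action because $T$ does), and every $E^2$ term is nilpotent with no further argument. You instead peel the trivial quotient off the \emph{top}, $N'=NI_G\mono N\epi N/N'$, so the inductive hypothesis lands in the coefficient variable $H_q(N',X)$, and you must additionally argue that $H_p(N/N',-)$ transports nilpotency of the coefficients to nilpotency of the homology; that is exactly your UCT/additivity step. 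Both routes finish identically through the finite abutment filtration. What your version buys is an explicit bound, $H_n(N,X)I_G^{k(n+1)}=0$; what the paper's buys is economy, since no separate coefficient argument is needed.

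One small repair to your coefficient step: the UCT splitting is not natural, so from vanishing of the action of $r$ on the subgroup $H_p(N/N',\ZZ)\otimes A$ and on the quotient ${\rm Tor}(H_{p-1}(N/N',\ZZ),A)$ you may only conclude that $r^2$ acts as zero on $E^2_{p,q}$, i.e.\ annihilation by $I_G^{2k_q}$ --- still nilpotent, so nothing breaks, but the bound you state does not follow this way. The cleaner route is the one you already gesture at: since $H_p(N/N',-)$ is an additive functor of the coefficient module and $G$ acts trivially on $N/N'$, the action of $r\in\ZZ[G]$ on $H_p(N/N',A)$ is literally $H_p({\rm id},\mu_r)=H_p({\rm id},0)=0$ whenever $\mu_r=0$; this gives $E^2_{p,q}I_G^{k_q}=0$ directly, with no appeal to the UCT.
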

\begin{proof} Let $N$ be a nilpotent $G$-module of class $m.$
The proof is by induction on $m.$ If $m=1,$ then $N$ is a trivial $G$-module, and hence, $H_n(N,X)$ is trivial. Prove the inductive step. Consider a short exact sequence $T\mono N\epi N',$ where $T$ is a trivial module and $N'$ is a nilpotent module of class $m-1.$ By inductive hypothesis the second page of the spectral sequence consists of nilpotent modules $H_i(N',H_j(T,X)).$ It follows that $H_n(N,X)$ is nilpotent.  
\end{proof}

By $C$ we denote the infinite cyclic group.  
A group $G$ is said to be metabelian over $C$ if there is a short exact sequence $M\mono G \epi C,$ where $M$ is abelian. Then $M$ can be considered as a $C$-module, where the action of $C$ is induced by the conjugation. It is easy to see that $G=M\rtimes C.$ Hence the group structure on $G$ depends only on the $C$-module $M.$ Then the completions can be described as follows  
\begin{equation}
\hat G_\ZZ=\hat M_I \rtimes C, \hspace{1cm} \hat G_{\ZZ/p}=\hat M_{I_p}\rtimes \ZZ_p, \hspace{1cm} \hat G_\QQ=(M\otimes \QQ)^\wedge_I\rtimes \QQ,
\end{equation}
where $I$ is the augmentation ideal of $\ZZ[C]$ and $I_p=I+p\cdot \ZZ[C]$ (see Prop. 4.7 and Prop. 4.12 of \cite{Ivanov_Mikhailov}).

\begin{Proposition}\label{proposition_MrtimesC} Let $M$ be a $C$-module such that $M\otimes \QQ$ is finite dimensional and $G=M\rtimes C$. Then the map $$H_n(G,\QQ)\epi H_n(\hat G_\QQ,\QQ) $$ is an epimorphism for any $n.$ 
\end{Proposition}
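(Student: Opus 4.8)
The plan is to compare the Lyndon--Hochschild--Serre spectral sequences of the two group extensions $M\mono G\epi C$ and $(M\otimes\QQ)^\wedge_I\mono \hat G_\QQ \epi \QQ$, using the module decomposition supplied by Lemma~\ref{lemma_k[c]-mod}. Write $V=M\otimes\QQ$, which by hypothesis is a finite-dimensional $\QQ[C]$-module. Applying Lemma~\ref{lemma_k[c]-mod} with the Artinian ring $\QQ$ yields a decomposition of $C$-modules $V=VI^\infty\oplus \hat V_I$, where $\hat V_I=(M\otimes\QQ)^\wedge_I$ is nilpotent and $t-1$ acts invertibly on $VI^\infty$; in particular the projection $V\epi \hat V_I$ is a split epimorphism of $C$-modules. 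Recall that $\hat G_\QQ=\hat V_I\rtimes\QQ$ and that $G\to\hat G_\QQ$ restricts on fibres to the composite $M\to V\epi \hat V_I$. Since $M$ and $\hat V_I$ are abelian one has $H_q(M,\QQ)=\Lambda^q V$ and $H_q(\hat V_I,\QQ)=\Lambda^q\hat V_I$, and the induced map on the homology of the fibres is $\Lambda^q$ of the split $C$-epimorphism $V\epi\hat V_I$, hence again a split epimorphism of $C$-modules.

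Both base groups have homological dimension $1$ over $\QQ$: writing $\QQ=\varinjlim_n \tfrac1{n!}\ZZ$ and using that homology commutes with filtered colimits together with $\mathrm{cd}_\QQ(C)=1$, one gets $H_p(C,A)=0=H_p(\QQ,A)$ for every coefficient module $A$ and every $p\geq 2$. Thus each spectral sequence has only the columns $p=0,1$ and collapses to natural short exact sequences
\[0\to H_0(C,\Lambda^n V)\to H_n(G,\QQ)\to H_1(C,\Lambda^{n-1}V)\to 0,\]
\[0\to H_0(\QQ,\Lambda^n\hat V_I)\to H_n(\hat G_\QQ,\QQ)\to H_1(\QQ,\Lambda^{n-1}\hat V_I)\to 0,\]
and the morphism of extensions induces a map between them. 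By the snake lemma the cokernel of the middle vertical map is squeezed between the cokernels of the two outer vertical maps, so it suffices to prove that the maps on the outer terms are epimorphisms.

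To this end I would identify the $\QQ$-terms with $C$-terms. Since $t-1$ acts nilpotently on each $\Lambda^q\hat V_I$, the operator $t$ is unipotent, and for $t_n:=t^{1/n!}$ one has $t-1=(t_n-1)(1+t_n+\dots+t_n^{\,n!-1})$ with the second factor equal to $n!\cdot\mathrm{id}$ plus a nilpotent, hence invertible over $\QQ$. Therefore $\ker(t_n-1)=\ker(t-1)$ and $\mathrm{coker}(t_n-1)=\mathrm{coker}(t-1)$ on any such module $B$, so $H_0(\tfrac1{n!}\ZZ,B)=B_C$ and $H_1(\tfrac1{n!}\ZZ,B)=B^C$ compatibly in $n$; passing to the colimit gives $H_0(\QQ,B)=B_C$ and $H_1(\QQ,B)=B^C$. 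Under these identifications the two outer comparison maps become the maps $(\Lambda^n V)_C\to(\Lambda^n\hat V_I)_C$ and $(\Lambda^{n-1}V)^C\to(\Lambda^{n-1}\hat V_I)^C$ induced by the split $C$-epimorphism $\Lambda^\ast V\epi\Lambda^\ast\hat V_I$. The coinvariants map is surjective because coinvariants is right exact, and the invariants map is surjective because the $C$-equivariant splitting $\Lambda^\ast\hat V_I\hookrightarrow\Lambda^\ast V$ sends invariants to invariants and hence furnishes a section. This yields the desired epimorphism $H_n(G,\QQ)\epi H_n(\hat G_\QQ,\QQ)$.

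The main obstacle is precisely this last identification: one must check that the homology of the divisible group $\QQ$ acting unipotently behaves exactly like that of $C$ (concentrated in degrees $0,1$ and given there by coinvariants and invariants), and that the spectral-sequence comparison map reduces to the obvious map induced by $V\epi\hat V_I$. Once the split $C$-module epimorphism coming from Lemma~\ref{lemma_k[c]-mod} is in hand, the remaining steps are formal applications of right exactness of coinvariants, equivariance of the splitting, and the snake lemma.
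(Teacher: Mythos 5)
Your proposal is correct, and its skeleton coincides with the paper's proof: the same two-column Lyndon--Hochschild--Serre comparison collapsing to a morphism of short exact sequences, the same reduction to surjectivity of $H_i(C,H_n(M,\QQ))\to H_i(\QQ,H_n(\hat V_I,\QQ))$ for $i=0,1$, and the same use of Lemma~\ref{lemma_k[c]-mod} to produce the split $C$-epimorphism $\Lambda^\ast V\epi\Lambda^\ast\hat V_I$. Where you genuinely diverge is the step identifying $\QQ$-homology with $C$-homology on the fibre homology: the paper proves $H_*(C,\mathcal N)=H_*(\QQ,\mathcal N)$ for all nilpotent modules by induction on nilpotency class via long exact sequences, starting from $H_*(C,\QQ)=H_*(\QQ,\QQ)$ and using Lemma~\ref{lemma_homology_of_nilpotent} plus the binomial lifting formula; you instead compute directly over the colimit $\QQ=\varinjlim\tfrac1{n!}\ZZ$, factoring $t-1=(t_n-1)u$ with $u$ invertible on unipotent modules. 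Your route is more elementary and explicit, but note two points. First, the unipotence of each $t_n$ on $\Lambda^q\hat V_I$ is not automatic from unipotence of $t$: it requires knowing how the $\QQ$-action is defined, i.e.\ the same Malcev/binomial formula the paper invokes (which gives $\mathcal N(t^\alpha-1)\subseteq\mathcal N(t-1)$), so you have not fully avoided that ingredient; you also tacitly use Lemma~\ref{lemma_homology_of_nilpotent} when asserting $t-1$ is nilpotent on $\Lambda^q\hat V_I$. Second, your phrase ``compatibly in $n$'' is not literally correct for $H_1$: under your identifications the transition map $H_1(\tfrac1{n!}\ZZ,B)\to H_1(\tfrac1{(n+1)!}\ZZ,B)$ is multiplication by the norm element, which acts as multiplication by $n+1$ on $B^C$, not as the identity. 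This is harmless --- these maps are isomorphisms of $\QQ$-vector spaces, so the colimit is still $B^C$ and the map from the stage $C\subset\QQ$ is an isomorphism, which is all your surjectivity argument needs --- but it should be stated accurately.
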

\begin{proof} Set $V=M\otimes \QQ.$
The homomorphism $G\to \hat G_\QQ$ induces a morphism of the spectral sequences $$H_i(C,H_j(M,\QQ))\longrightarrow H_i(\QQ,H_j(\hat V_I,\QQ)).$$ Note that $H_i(C,-)=0$ and $H_i(\mathbb Q,-)=0$ for $i\geq 2.$ Indeed, for $C$ it is obvious and for $\QQ$ it follows from the fact that homology commutes with direct limits ($H_*(\varinjlim G_i, M)=\varinjlim H_*(G_i,M)$) and $\QQ=\varinjlim \ZZ$. Then the spectral sequences have only two nontrivial columns and the morphism of spectral sequences gives a morphism of the short exact sequences
$$
\begin{tikzcd} 0 \arrow[r] & H_0(C, H_n(M,\QQ)) \arrow[r] \arrow[d] & H_n(G,\QQ) \arrow[r]\arrow[d] & H_1(C,H_{n-1}(M,\QQ)) \arrow[r]\arrow[d] & 0\\
0 \arrow[r] & H_0(\QQ, H_n(\hat V_I,\QQ)) \arrow[r] & H_n(\hat G_\QQ,\QQ) \arrow[r] & H_1(\QQ,H_{n-1}(\hat V_I,\QQ)) \arrow[r] & 0.
\end{tikzcd}
$$ Therefore, it is enough to prove that
$$H_i(C,H_n(M,\QQ)) \longrightarrow H_i(\QQ,H_n(\hat V_I,\QQ))$$
is an epimorphism for any $n$ and $i=0,1.$ 

Note that $H_*(C,\QQ)=H_*(\QQ,\QQ).$ Using this and the long exact sequence associated with a short exact sequence of modules, we obtain 
$$H_*(C,\mathcal N)=H_*(\QQ,\mathcal N)$$ for any nilpotent $\QQ[\QQ]$-module $\mathcal N.$ If $\mathcal N$ is a nilpotent $\QQ[C]$-module, there is a unique way to lift the action of $C=\langle t\rangle$ to the action of $\QQ=\{t^\alpha\mid \alpha\in \QQ\}$ on $\mathcal N:$
$$xt^{\alpha}=\sum_{n=0}^{\infty} \binom{\alpha}{n} x(t-1)^n,$$
where $\alpha\in \QQ,$ $x\in \mathcal N$ (see Lemma 4.4 of \cite{Ivanov_Mikhailov}).  The sum is finite because $\mathcal N(t-1)^n=0$ for big enough $n$. It is easy to see from the formula that $\mathcal N(t^\alpha-1)\subseteq \mathcal N(t-1).$ It follows that the $\mathbb Q[\QQ]$-module $\mathcal N$ is nilpotent if and only if it is nilpotent as a $\QQ[C]$-module. 

 The module $\hat V_I$ is nilpotent as a $ \QQ[C]$-module (Lemma \ref{lemma_k[c]-mod}). Then $H_{n}(\hat V_I,\QQ)$ is nilpotent as $\QQ[C]$-module (Lemma \ref{lemma_homology_of_nilpotent}), and hence, it is nilpotent as a $\QQ[\QQ]$-module. It follows that $$H_i(C,H_{n}(\hat V_I,\QQ))=H_i(\QQ,H_{n}(\hat V_I,\QQ)).$$ Hence it is sufficient to prove that the homomorphism
$$H_n(M,\QQ)\longrightarrow H_n(\hat V_I,\QQ)$$
is a split epimorphism in the category of $C$-modules.  Note that $H_n(M,\QQ)= \Lambda^n (V)$ and $H_n(\hat V_I,\QQ)=\Lambda^n(\hat V_I).$ Then the assertion follows from the fact that $V\to \hat V_I$ is a split epimorphism of $C$-modules (Lemma \ref{lemma_k[c]-mod}).  
\end{proof}

\begin{Theorem}\label{theorem_MrtimesC} Let  $M$ be a finitely generated $C$-module such that ${\sf tor}(M)$ is finite and $M\otimes \QQ $ is finite dimensional. Set $G=M\rtimes C.$ Then $$H_*(\hat G_\ZZ,\ZZ/p)=H_*(\hat G_{\ZZ/p},\ZZ/p)$$ and the morphism 
$$H_n(G,\ZZ/p) \longrightarrow H_n(\hat G_{\ZZ/p},\ZZ/p)$$
is an epimorphism for any $n$. Moreover, if $C=\langle t \rangle$ and $M(t-1)\subseteq pM,$ then 
$$H_*(G,\ZZ/p)\cong H_*(\hat G_{\ZZ/p},\ZZ/p) \hspace{1cm}\text{and}\hspace{1cm} (BG)_{\ZZ/p}\cong B(\hat G_{\ZZ/p}).$$
\end{Theorem}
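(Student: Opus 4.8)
The plan is to factor the two completion maps through the intermediate group $\hat M_{I_p}\rtimes C$ and compare Lyndon--Hochschild--Serre spectral sequences, thereby separating two independent effects: the change of base from the honest infinite cyclic group $C$ to $\ZZ_p$, and the comparison of the fibre modules $M$, $\hat M_I$ and $\hat M_{I_p}$. Using the identifications $\hat G_\ZZ=\hat M_I\rtimes C$ and $\hat G_{\ZZ/p}=\hat M_{I_p}\rtimes\ZZ_p$, I factor
\[ G=M\rtimes C\to \hat M_{I_p}\rtimes C\to \hat M_{I_p}\rtimes\ZZ_p=\hat G_{\ZZ/p}, \qquad \hat G_\ZZ=\hat M_I\rtimes C\to \hat M_{I_p}\rtimes C\to\hat G_{\ZZ/p}. \]
Since the hypotheses make $M$ a metabelian group of finite Pr\"ufer rank, the finiteness results of Section~\ref{sectionMod-phomology} apply to every abelian group below.

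\emph{Change of base.} The right-hand arrow $\hat M_{I_p}\rtimes C\to\hat M_{I_p}\rtimes\ZZ_p$ fixes the fibre $\hat M_{I_p}$ and only enlarges the base. The heart of the argument is the following: for a finite $\ZZ/p[C]$-module $W$ on which $t-1$ acts nilpotently (say $(t-1)^N=0$), the map $H_i(C,W)\to H_i(\ZZ_p,W)$ induced by $C\hookrightarrow\ZZ_p$ is an isomorphism. I would prove this by noting that the Frobenius identity $(t-1)^{p^k}=t^{p^k}-1$ in $\ZZ/p[C]$ forces $t^{p^k}$ to act trivially on $W$ once $p^k\ge N$, so both the $C$-action and the $\ZZ_p$-action on $W$ factor through the common finite quotient $\ZZ/p^k=\ZZ_p/p^k\ZZ_p$. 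Comparing the spectral sequences of $1\to p^k\ZZ\to C\to\ZZ/p^k\to1$ and $1\to p^k\ZZ_p\to\ZZ_p\to\ZZ/p^k\to1$, the inclusion $p^k\ZZ\hookrightarrow p^k\ZZ_p$ induces an isomorphism on the fibre homology $H_b(-,\ZZ/p)\otimes W$ (both kernels give $\ZZ/p$ in degrees $0,1$, and in degree $1$ the map is the isomorphism $p^k\ZZ/p\cong p^k\ZZ_p/p$), so the comparison is an isomorphism on $E^2$ and hence on the abutment. Applying this with $W=H_j(\hat M_{I_p},\ZZ/p)$ --- finite by Corollary~\ref{corollary_finite_homology} (as $\hat M_{I_p}$ is a pro-$p$ module with $\hat M_{I_p}/p$ and ${}_p\hat M_{I_p}$ finite by Lemma~\ref{lemma_inverse_limit_of_AQp}) and carrying a nilpotent action of $t-1$, since $\hat M_{I_p}/p$ is a finite module over the local ring $\ZZ/p[[t-1]]$ --- makes the right-hand arrow an isomorphism on $H_*(-,\ZZ/p)$.

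\emph{Fibre comparison over $C$.} In the left-hand arrows the base is the honest group $C$, so both spectral sequences have only the columns $i=0,1$ and degenerate; it therefore suffices to control the fibre map as a morphism of $C$-modules. Setting $\mathcal M=\hat M_{\ZZ/p}$, Proposition~\ref{prop_pro-p-abelian} (using that ${\sf tor}(M)$ is finite) gives a $C$-equivariant isomorphism $H_*(M,\ZZ/p)\cong H_*(\mathcal M,\ZZ/p)$, while $\hat{\mathcal M}_I=\hat M_{I_p}$ (Corollary~\ref{cor_comm_compl}) and the splitting $\mathcal M=\mathcal M I^\infty\oplus\hat M_{I_p}$ of $C$-modules (Lemma~\ref{lemma_compl_of_Z_p-mod}) yield, through the K\"unneth formula, a $C$-equivariant split epimorphism $H_*(M,\ZZ/p)\twoheadrightarrow H_*(\hat M_{I_p},\ZZ/p)$, which is an isomorphism precisely when $\mathcal M I^\infty=0$, i.e.\ when $M(t-1)\subseteq pM$. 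For $\hat G_\ZZ$ the analogous fibre map is $H_*(\hat M_I,\ZZ/p)\to H_*(\hat M_{I_p},\ZZ/p)$, and this is an isomorphism by Proposition~\ref{prop_pro-p-abelian} applied to $A=\hat M_I$ (with $\hat A_{\ZZ/p}=\hat M_{I_p}$ by Corollary~\ref{cor_comm_compl}); the one extra input is that ${\sf tor}(\hat M_I)$ be finite, which I would deduce from ${\sf tor}(M)$ finite by applying the exact $I$-adic completion to ${\sf tor}(M)\mono M\epi M/{\sf tor}(M)$ and using flatness of $\widehat{\ZZ[C]}_I$ over $\ZZ[C]$ to see that $\widehat{(M/{\sf tor}(M))}_I$ is torsion-free.

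Assembling the two steps: for $\hat G_\ZZ$ the left arrow is an isomorphism on $H_*(-,\ZZ/p)$ (fibre isomorphism, two columns) and the right arrow is an isomorphism (change of base), giving $H_*(\hat G_\ZZ,\ZZ/p)\cong H_*(\hat G_{\ZZ/p},\ZZ/p)$. For $G$ the left arrow is a split epimorphism in each column, hence an epimorphism on the two-column abutment, and composing with the change-of-base isomorphism shows $H_n(G,\ZZ/p)\to H_n(\hat G_{\ZZ/p},\ZZ/p)$ is surjective. When $M(t-1)\subseteq pM$ the fibre map is an isomorphism, so the composite is an isomorphism in all degrees; combined with the fact that $G$ is $H\ZZ/p$-good (Theorem~\ref{theorem_solvable_R-compl}) and that $B\hat G_{\ZZ/p}$ is a $\ZZ/p$-local $K(\pi,1)$, the resulting all-degree $\ZZ/p$-homology equivalence $BG\to B\hat G_{\ZZ/p}$ gives $(BG)_{\ZZ/p}\cong B(\hat G_{\ZZ/p})$. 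I expect the change-of-base step to be the main obstacle: the abstract group $\ZZ_p$ is enormous, and for a nontrivial unipotent $W$ the homology $H_i(\ZZ_p,W)$ is actually nonzero in infinitely many degrees (the periodic homology of the $\ZZ/p^k$-quotient survives); the crux is not to prove this homology small, but to recognise that exactly the same contribution occurs for $C$ and that the comparison map is already an isomorphism on $E^2$.
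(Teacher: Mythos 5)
Your proposal is, in substance, the paper's own proof: the same identifications $\hat G_\ZZ=\hat M_I\rtimes C$ and $\hat G_{\ZZ/p}=\hat M_{I_p}\rtimes \ZZ_p$, the same use of $\mathcal M=\hat M_{\ZZ/p}$ together with Corollary \ref{cor_comm_compl} and Lemma \ref{lemma_compl_of_Z_p-mod} to produce the $C$-equivariant split epimorphism on fibre homology, transfer between $M,\hat M_I,\hat M_{I_p}$ via Proposition \ref{prop_pro-p-abelian}, the same two-column spectral sequence assembly, and the same final argument for the ``Moreover'' clause. The one genuinely different ingredient is your change-of-base step: the paper obtains $H_*(C,W)\cong H_*(\ZZ_p,W)$ for a finite $\ZZ/p[\ZZ_p]$-module $W$ simply by applying Proposition \ref{prop_pro-p-abelian} to $A=\ZZ$, whereas you reprove it by comparing the spectral sequences of $p^k\ZZ\mono C\epi \ZZ/p^k$ and $p^k\ZZ_p\mono \ZZ_p\epi \ZZ/p^k$; that comparison (both fibres contribute $W$ in degrees $0,1$ only, and the inclusion induces $p^k\ZZ/p\cdot p^k\ZZ\cong p^k\ZZ_p/p\cdot p^k\ZZ_p$) is correct and more self-contained. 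Your explicit verification that ${\sf tor}(\hat M_I)$ is finite, via exactness of completion and flatness of $\widehat{\ZZ[C]}_I$, supplies a hypothesis of Proposition \ref{prop_pro-p-abelian} that the paper applies silently; that is a genuine improvement in rigour.

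There are, however, two soft spots in your change-of-base lemma, both fixable. First, the Frobenius identity $(t-1)^{p^k}=t^{p^k}-1$ only shows that the $C$-action on $W$ kills $p^k\ZZ$; your assertion that the $\ZZ_p$-action then kills $p^k\ZZ_p$ does not follow from it. You need the (easy, but unstated) fact that every finite quotient of the abstract group $\ZZ_p$ is a cyclic $p$-group: any finite-index subgroup of index $n$ contains $n\ZZ_p=p^{v_p(n)}\ZZ_p$, so the image of $\ZZ_p\to{\rm Aut}(W)$ is some $\ZZ/p^s$ generated by the image of $t$, its kernel is exactly $p^s\ZZ_p$, and $t^{p^k}$ acting trivially forces $s\leq k$. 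This observation is precisely the engine of the paper's proof of Proposition \ref{prop_pro-p-abelian}. Second, your claim that $t-1$ acts nilpotently on $W=H_j(\hat M_{I_p},\ZZ/p)$ is justified only for $j\leq 1$ ($\hat M_{I_p}/p$ finite over $\ZZ/p[[t-1]]$); propagating unipotence to all $H_j$ naturally is exactly where one would reach for the decomposition $H_*(A,\ZZ/p)\cong \Lambda^*(A/p)\otimes\Gamma^{*/2}({}_pA)$, which the paper deliberately avoids because it is not available naturally at $p=2$ (one can instead deduce nilpotency from quasiconstancy, Proposition \ref{proposition_finite_prufer_abelian} and Lemma \ref{lemma_homology_of_nilpotent}, since $H_j(\hat M_{I_p},\ZZ/p)$ is a stable $C$-subquotient of the nilpotent modules $H_j(M/MI_p^i,\ZZ/p)$). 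Fortunately nilpotency is not needed at all: once the first point is in place, finiteness of $W$ alone gives the common factoring through a quotient $\ZZ/p^s$, and your $E^2$-comparison runs verbatim from there.
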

\begin{proof}
Set $\mathcal M=\hat M_{\ZZ/p}.$ Then $\mathcal M$ is a finitely generated $\ZZ_p$-module (Proposition \ref{prop_pro-p-abelian}) and it is isomorphic to $\ZZ_p^m \oplus \bigoplus_{j=1}^m \ZZ/p^{k_j}$ as an abelian group. Using Corollary \ref{cor_comm_compl} and Lemma \ref{lemma_compl_of_Z_p-mod}  we obtain $$\hat{\mathcal M}_I=\hat{\mathcal M}_{I_p}=(\hat M_I)^{\wedge}_{\ZZ/p}=\hat M_{I_p}, \hspace{1cm} \mathcal M = \mathcal MI^\infty \oplus \hat{\mathcal M}_I.$$
It follows that 
\begin{equation}\label{eq_last_th_proof1}
H_*(\mathcal M,\ZZ/p) \longrightarrow H_*(\hat{\mathcal M}_I,\ZZ/p)
\end{equation}
is a split epimorphism of $C$-modules. By Proposition  \ref{prop_pro-p-abelian} we obtain $H_*(\mathcal M,\ZZ/p)=H_*(M,\ZZ/p)$ and $H_*(\hat M_{I_p},\ZZ/p)=H_*(\hat{\mathcal M}_I,\ZZ/p)=H_*(\hat M_I,\ZZ/p).$ It follows that 
\begin{equation}\label{eq_last_th_proof2}
H_*(M,\ZZ/p) \longrightarrow H_*(\hat M_I,\ZZ/p)
\end{equation}
is a split epimorphism of $C$-modules and 
$$H_*(\hat M_I,\ZZ/p)=H_*(\hat M_{I_p},\ZZ/p).$$ 
By Theorem \ref{theorem_finite_prufer_solvable} we get that $H_n(\hat M_{I_p},\ZZ/p)$ is a finite $\ZZ/p[\ZZ_p]$-module. Using Proposition \ref{prop_pro-p-abelian}, we obtain 
$$H_*(C,H_n(\hat M_I,\ZZ/p))=H_*(C,H_n(\hat M_{I_p},\ZZ/p))=H_*(\ZZ_p,H_n(\hat M_{I_p},\ZZ/p)).$$
Since $\hat G_\ZZ=\hat M_I\rtimes C$ and $\hat G_{\ZZ/p}=\hat M_{I_p}\rtimes \ZZ_p,$ it follows that $$H_*(\hat G_{\ZZ},\ZZ/p)=H_*(\hat G_{\ZZ/p},\ZZ/p).$$
The homomorphism $G\to \hat G_\ZZ$ induces a morphism of the spectral sequences $$H_i(C,H_j(M,\ZZ/p))\longrightarrow H_i(C,H_j(\hat M_I,\ZZ/p)).$$ 
The spectral sequences have only two nontrivial columns and the morphism of spectral sequences gives a morphism of the short exact sequences
\begin{equation}\label{eq_last_th_proof3}
\begin{tikzcd} 0 \arrow[r] & H_0(C, H_n(M,\ZZ/p)) \arrow[r] \arrow[d] & H_n(G,\ZZ/p) \arrow[r]\arrow[d] & H_1(C,H_{n-1}(M,\ZZ/p)) \arrow[r]\arrow[d] & 0\\
0 \arrow[r] & H_0(C, H_n(\hat M_I,\ZZ/p)) \arrow[r] & H_n(\hat G_\ZZ,\ZZ/p) \arrow[r] & H_1(C,H_{n-1}(\hat M_I,\ZZ/p)) \arrow[r] & 0.
\end{tikzcd}
\end{equation}
Using that the left and the right vertical arrows are split epimorphisms, we obtain that the middle vertical arrow is an epimorphism. 

Assume now $M(t-1)\subseteq pM.$ Then $\mathcal M(t-1)\subseteq p\mathcal M.$ Hence, $\mathcal MI^{\infty}=0.$ It follows that the map \eqref{eq_last_th_proof1} is an isomorphism. Then \eqref{eq_last_th_proof2} is an isomorphism. Then the left and the right vertical arrows in the diagram \eqref{eq_last_th_proof3} are isomorphisms. Then $H_*(G,\ZZ/p)= H_*(\hat G_{\ZZ},\ZZ/p)=H_*(\hat G_{\ZZ/p},\ZZ/p).$ Then $BG \to B(\hat G_{\ZZ/p})$ is a $\ZZ/p$-homological equivalence. Moreover, the space $B(\hat G_{\ZZ/p})$ is $\ZZ/p$-local because the group $\hat G_{ZZ/p}$ is $H\ZZ/p$-local. It follows that $(BG)_{\ZZ/p}=B(\hat G_{\ZZ/p}).$
\end{proof}

\begin{Corollary} Let $M$ be a $C$-module such that $M\rtimes C$ is finitely presented. Set $G=M\rtimes C.$ Then $$H_*(\hat G_\ZZ,\ZZ/p)=H_*(\hat G_{\ZZ/p},\ZZ/p)$$ and the morphism 
$$H_n(G,\ZZ/p) \longrightarrow H_n(\hat G_{\ZZ/p},\ZZ/p)$$
is an epimorphism for any $n$. Moreover, if $C=\langle t \rangle$ and $M(t-1)\subseteq pM,$ then 
$$H_*(G,\ZZ/p)\cong H_*(\hat G_{\ZZ/p},\ZZ/p) \hspace{1cm}\text{and}\hspace{1cm} (BG)_{\ZZ/p}\cong B(\hat G_{\ZZ/p}).$$
\end{Corollary}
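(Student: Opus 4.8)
The plan is to reduce the corollary directly to Theorem \ref{theorem_MrtimesC} by invoking the Bieri--Strebel characterization of finite presentability for groups of the form $M \rtimes C$. The only work is to check that the hypotheses of the corollary imply those of the theorem.

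First I would observe that if $G = M \rtimes C$ is finitely presented, then in particular it is finitely generated; and since $G/M \cong C$ is infinite cyclic, a standard argument shows that $M$ is finitely generated as a $C$-module. Indeed, choosing finitely many generators of $G$ whose images generate $C$ (so that one of them maps to a generator $t$ of $C$), their $M$-components generate $M$ as a $\ZZ[C]$-module. Thus the situation of the corollary lies within the scope of the Bieri--Strebel theorem, which is stated for finitely generated $C$-modules.

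Next I would apply \cite[Th. C]{Bieri-Strebel_78}: the group $G = M \rtimes C$ is finitely presented if and only if the finitely generated $C$-module $M$ is tame. By the definition of tameness recalled in the introduction, this yields precisely that the torsion subgroup ${\sf tor}(M)$ is finite and that $M \otimes \QQ$ is finite dimensional (the integrality condition on the characteristic polynomial, which is the third part of tameness, is not needed here).

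These two finiteness conditions, together with the finite generation of $M$ as a $C$-module, are exactly the hypotheses of Theorem \ref{theorem_MrtimesC}. Applying that theorem therefore gives all of the conclusions of the corollary at once: the identification $H_*(\hat G_\ZZ,\ZZ/p) = H_*(\hat G_{\ZZ/p},\ZZ/p)$, the surjectivity of $H_n(G,\ZZ/p) \to H_n(\hat G_{\ZZ/p},\ZZ/p)$ for every $n$, and---under the additional hypothesis $M(t-1) \subseteq pM$---the isomorphism $H_*(G,\ZZ/p) \cong H_*(\hat G_{\ZZ/p},\ZZ/p)$ together with the homotopy equivalence $(BG)_{\ZZ/p} \cong B(\hat G_{\ZZ/p})$. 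Since the corollary is merely a specialization obtained by feeding the Bieri--Strebel hypothesis into the theorem, I do not expect any genuine obstacle; the one point deserving a word of care is the passage from finite presentability of $G$ to finite generation of $M$ as a $C$-module, but as indicated this is routine.
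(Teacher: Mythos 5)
Your proposal is correct and is exactly the paper's intended route: the corollary is stated without proof precisely because it follows by feeding the Bieri--Strebel criterion (finite presentability of $M\rtimes C$ $\Leftrightarrow$ tameness of $M$) into Theorem \ref{theorem_MrtimesC}, whose hypotheses---finite generation of $M$, finiteness of ${\sf tor}(M)$, finite dimensionality of $M\otimes\QQ$---are contained in tameness. Your extra care in verifying that finite presentability of $G$ forces $M$ to be a finitely generated $\ZZ[C]$-module is a sound (and standard) observation that the paper leaves implicit.
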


\begin{Corollary} Let $a\in {\rm GL}_n(\ZZ)$ such that entries of $a-1$ are divisible by $p.$ Set $G=\ZZ^n\rtimes_a C.$ Then $(BG)_{\ZZ/p}=B(\hat G_{\ZZ/p}).$
\end{Corollary}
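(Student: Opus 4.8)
The plan is to realise the statement as a direct instance of Theorem \ref{theorem_MrtimesC}, so the entire content is the verification of that theorem's hypotheses. First I would set $M=\ZZ^n$, regarded as a $C$-module via $t\cdot v = av$ for the generator $t$ of $C=\langle t\rangle$, so that $G=\ZZ^n\rtimes_a C = M\rtimes C$. Then I would check the three standing hypotheses of Theorem \ref{theorem_MrtimesC}: the module $M=\ZZ^n$ is finitely generated as an abelian group and hence as a $C$-module; its torsion subgroup ${\sf tor}(M)=0$ is finite; and $M\otimes\QQ=\QQ^n$ is finite dimensional. (One may also note that the characteristic polynomial of $a\in{\rm GL}_n(\ZZ)$ is monic with integer coefficients, so $M$ is in fact tame and $G$ is finitely presented, but this is not needed to apply the theorem directly.)

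The essential point is to verify the condition $M(t-1)\subseteq pM$ that activates the ``Moreover'' clause of the theorem. Since $t$ acts as the matrix $a$, the operator $t-1$ acts as $a-I$. By hypothesis every entry of $a-1$ is divisible by $p$, so $a-I=pb$ for some integer matrix $b\in M_n(\ZZ)$. Consequently $M(t-1)=(a-I)\ZZ^n = pb\,\ZZ^n\subseteq p\ZZ^n = pM$, exactly the inequality required.

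With all hypotheses in place, the ``Moreover'' part of Theorem \ref{theorem_MrtimesC} yields $H_*(G,\ZZ/p)\cong H_*(\hat G_{\ZZ/p},\ZZ/p)$ together with the asserted homotopy equivalence $(BG)_{\ZZ/p}\cong B(\hat G_{\ZZ/p})$, which is precisely the claim. I do not expect any genuine obstacle here: the whole argument is the translation of the arithmetic condition ``entries of $a-1$ divisible by $p$'' into the module-theoretic condition $M(t-1)\subseteq pM$, after which the theorem applies verbatim. The only point meriting a moment of care is the bookkeeping of the left/right module convention relating the action of $t-1$ to the matrix $a-I$, but since the divisibility condition on $a-1$ is unaffected by transposition or inversion modulo $p$, this causes no difficulty.
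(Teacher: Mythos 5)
Your proposal is correct and matches the paper's intended argument exactly: the paper states this corollary without proof precisely because it is the instance $M=\ZZ^n$ of the ``Moreover'' clause of Theorem \ref{theorem_MrtimesC}, with the hypotheses (finite generation, finite torsion, finite-dimensional $M\otimes\QQ$, and $M(t-1)\subseteq pM$ from the divisibility of the entries of $a-1$) verified just as you do.
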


\end{document}